\newtheorem{theorem}{Theorem}
\newtheorem{corollary}[theorem]{Corollary}
\newtheorem{lemma}[theorem]{Lemma}
\newenvironment{proof}[1][Proof.]{\begin{trivlist}
\item[\hskip \labelsep {\bfseries #1}]}{\end{trivlist}}
\newcommand{\AmS}{{\protect\the\textfont2
  A\kern-.1667em\lower.5ex\hbox{M}\kern-.125emS}}
\title{Interval edge-colorings of Cartesian products of graphs I}
\author{Petros A. Petrosyan\address[MCSD]{Department of Informatics and Applied Mathematics,\\
Yerevan State University, 0025, Armenia}%
\address{Institute for Informatics and Automation Problems,\\
National Academy of Sciences, 0014, Armenia}%
\thanks{email: pet\_petros@ipia.sci.am},
        Hrant H. Khachatrian\addressmark[MCSD]%
\thanks {email: hrant@egern.net},
        Hovhannes G. Tananyan\address{Department of Applied Mathematics and Informatics,\\
Russian-Armenian State University, 0051, Armenia}%
\thanks {email: HTananyan@yahoo.com}}
\begin{document}

\maketitle

\begin{abstract}
An edge-coloring of a graph $G$ with colors $1,\ldots ,t$ is an
interval $t$-coloring if all colors are used, and the colors of
edges incident to each vertex of $G$ are distinct and form an
interval of integers. A graph $G$ is interval colorable if $G$ has
an interval $t$-coloring for some positive integer $t$. Let
$\mathfrak{N}$ be the set of all interval colorable graphs. For a
graph $G\in \mathfrak{N}$, the least and the greatest values of $t$
for which $G$ has an interval $t$-coloring are denoted by $w(G)$ and
$W(G)$, respectively. In this paper we first show that if $G$ is an
$r$-regular graph and $G\in \mathfrak{N}$, then $W(G\square
P_{m})\geq W(G)+W(P_{m})+(m-1)r$ ($m\in \mathbb{N}$) and $W(G\square
C_{2n})\geq W(G)+W(C_{2n})+nr$ ($n\geq 2$). Next, we investigate
interval edge-colorings of grids, cylinders and tori. In particular,
we prove that if $G\square H$ is planar and both factors have at
least $3$ vertices, then $G\square H\in \mathfrak{N}$ and
$w(G\square H)\leq 6$. Finally, we confirm the first author's
conjecture on the $n$-dimensional cube $Q_{n}$ and show that $Q_{n}$
has an interval $t$-coloring if and only if $n\leq t\leq
\frac{n\left(n+1\right)}{2}$.\\

Keywords: edge-coloring, interval coloring, grid, cylinder, torus, $n$-dimensional cube\\

AMS Subject Classification: 05C15, 05C76\\
\end{abstract}

\section{Introduction}\

An edge-coloring of a graph $G$ with colors $1,\ldots,t$ is an
interval $t$-coloring if all colors are used, and the colors of
edges incident to each vertex of $G$ are distinct and form an
interval of integers. A graph $G$ is interval colorable if $G$ has
an interval $t$-coloring for some positive integer $t$. Let
$\mathfrak{N}$ be the set of all interval colorable graphs
\cite{b1,b14}. For a graph $G\in \mathfrak{N}$, the least and the
greatest values of $t$ for which $G$ has an interval $t$-coloring
are denoted by $w(G)$ and $W(G)$, respectively. The concept of
interval edge-coloring was introduced by Asratian and Kamalian
\cite{b1}. In \cite{b1}, they proved the following:

\begin{theorem}
\label{mytheorem1} If $G$ is a regular graph, then

\begin{description}
\item[(1)] $G\in \mathfrak{N}$ if and only if $\chi^{\prime}(G)=\Delta(G)$.

\item[(2)] If $G\in \mathfrak{N}$ and $w(G)\leq t\leq W(G)$, then $G$
has an interval $t$-coloring.
\end{description}
\end{theorem}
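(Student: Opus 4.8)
The plan is to exploit the rigidity that regularity forces on an interval coloring: if $G$ is $r$-regular and $f$ is an interval $t$-coloring of $G$, then at each vertex $v$ the $r$ colors incident to $v$ form an interval of exactly $r$ consecutive integers, say $\{a_v, a_v+1,\dots,a_v+r-1\}$ with $1\le a_v\le t-r+1$. The single arithmetic fact I will use about such a window is that $r$ consecutive integers realize every residue class modulo $r$ exactly once.

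For part (1), the direction $(\Leftarrow)$ is immediate: a proper $\Delta(G)$-edge-coloring of an $r$-regular graph uses all $r$ colors at every vertex, hence is an interval $r$-coloring, so $G\in\mathfrak{N}$. For $(\Rightarrow)$, I would start from an interval $t$-coloring $f$ and pass to the coloring $g(e):=1+\bigl((f(e)-1)\bmod r\bigr)$. By the observation above, on the $r$ edges at any vertex $g$ is a bijection onto $\{1,\dots,r\}$, so $g$ is a proper $r$-edge-coloring and $\chi'(G)\le r$; together with $\chi'(G)\ge\Delta(G)=r$ this gives $\chi'(G)=\Delta(G)$.

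For part (2), note first that part (1) already yields $w(G)=r$: every interval coloring uses at least $\Delta(G)=r$ colors, and the proper $r$-edge-coloring provided by $\chi'(G)=r$ is an interval $r$-coloring. It therefore suffices to prove a ``one step down'' statement: if $G$ has an interval $t$-coloring with $t\ge r+1$, then $G$ has an interval $(t-1)$-coloring; iterating from $t=W(G)$ down to $t=r=w(G)$ then produces an interval $t$-coloring for every $t$ with $w(G)\le t\le W(G)$. To carry out the step, take an interval $t$-coloring $f$ with $t\ge r+1$ and recolor every edge of color $t$ with color $t-r$ (a legal color, since $t-r\ge 1$), leaving all other edges unchanged. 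A vertex $v$ not incident to a color-$t$ edge has $a_v+r-1<t$ and is untouched; a vertex $v$ incident to a color-$t$ edge has $a_v=t-r+1$, did not previously see color $t-r$, and so now sees the interval $\{t-r,t-r+1,\dots,t-1\}$. Hence the new coloring is proper, every vertex still sees $r$ consecutive colors, colors $1,\dots,t-1$ are all still used, and color $t$ has vanished, giving an interval $(t-1)$-coloring.

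I do not expect a serious obstacle: the whole argument hinges on the structural remark that in a regular graph the color set at a vertex is a full-length window, plus the freedom this gives to slide the top window down by one. The only points needing a moment's care are checking that the recoloring in part (2) creates no conflict at the endpoints of a recolored edge (it does not, since those endpoints did not already use color $t-r$), and confirming $w(G)=r$ so that the downward induction terminates at the correct value.
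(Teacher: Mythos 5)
Your argument is correct. Note that the paper itself does not prove Theorem~\ref{mytheorem1}; it is quoted as a known result of Asratian and Kamalian from \cite{b1}, so there is no in-paper proof to compare against. Your two ingredients are exactly the classical ones: for (1), reducing an interval $t$-coloring modulo $r$ (using that the $r$ consecutive colors at each vertex of an $r$-regular graph hit every residue class exactly once) to get a proper $\Delta(G)$-edge-coloring, with the converse being immediate since a proper $r$-edge-coloring of an $r$-regular graph shows the full interval $[1,r]$ at every vertex; for (2), observing $w(G)=r$ and performing the step-down recoloring of all color-$t$ edges to color $t-r$, which is legitimate precisely because any vertex meeting color $t$ has color window $[t-r+1,t]$ and hence does not yet see $t-r$, and no two color-$t$ edges are adjacent. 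The only implicit hypothesis worth a passing remark is $r\geq 1$ (the theorem is vacuous or degenerate for edgeless graphs), but this does not affect the substance of the proof.
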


In \cite{b2}, Asratian and Kamalian investigated interval
edge-colorings of connected graphs. In particular, they obtained the
following two results.

\begin{theorem}
\label{mytheorem2} If $G$ is a connected graph and $G\in
\mathfrak{N}$, then
\begin{center}
$W(G)\leq \left(\mathrm{diam}(G)+1\right)\left(\Delta(G) -1\right)
+1$.
\end{center}
\end{theorem}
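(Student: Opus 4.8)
The plan is to work with a fixed interval $W(G)$-coloring $c$ of $G$ and to exploit the fact that, in an interval coloring, the color of an edge cannot change by more than $\Delta(G)-1$ when one passes to an adjacent edge sharing a vertex. Write $t=W(G)$. Since all $t$ colors are used, there is an edge $e$ with $c(e)=1$ and an edge $e'$ with $c(e')=t$; fix an endpoint $u$ of $e$ and an endpoint $v$ of $e'$. I would first record two local facts. At $u$ the colors of the incident edges are $d(u)$ distinct integers forming an interval and one of them is $1$, so this interval is exactly $[1,d(u)]$; hence every edge incident to $u$ — in particular the first edge of any path leaving $u$ — has color at most $d(u)\le\Delta(G)$. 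Symmetrically, at $v$ the colors form the interval $[\,t-d(v)+1,\,t\,]$, so every edge incident to $v$ has color at least $t-d(v)+1\ge t-\Delta(G)+1$.

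Next I would take a shortest $u$–$v$ path $u=x_0,x_1,\dots,x_k=v$ with $k=\mathrm{dist}_G(u,v)\le\mathrm{diam}(G)$, and set $e_i=x_{i-1}x_i$. The key step is that for every interior vertex $x_i$ with $1\le i\le k-1$ the two distinct edges $e_i$ and $e_{i+1}$ are both incident to $x_i$, hence their colors lie in a common interval of $d(x_i)\le\Delta(G)$ consecutive integers, giving $|c(e_{i+1})-c(e_i)|\le\Delta(G)-1$. Telescoping along the path yields $c(e_k)\le c(e_1)+(k-1)(\Delta(G)-1)$.

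Finally I would combine the three inequalities — $c(e_1)\le\Delta(G)$ because $e_1$ is at $u$, $c(e_k)\ge t-\Delta(G)+1$ because $e_k$ is at $v$, and the telescoped bound — to obtain $t-\Delta(G)+1\le\Delta(G)+(k-1)(\Delta(G)-1)$, which simplifies to $t\le(k+1)(\Delta(G)-1)+1\le(\mathrm{diam}(G)+1)(\Delta(G)-1)+1$. The only thing to treat separately is the degenerate case $k=0$, where $e$ and $e'$ may be chosen incident to a common vertex $u=v$: then $1$ and $t$ both lie in the interval at $u$, so $t\le d(u)\le\Delta(G)$, which is at most the asserted bound (the subcase $\Delta(G)=1$, i.e.\ $G=K_2$, being trivial). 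I do not anticipate a genuine obstacle — the argument is entirely elementary — the single point that must be noticed is that a \emph{shortest} path is the right object to telescope along, so that the number of steps, and hence the accumulated change in color, is governed by $\mathrm{diam}(G)$.
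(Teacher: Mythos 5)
Your proof is correct: the interval at an endpoint of an edge colored $1$ must be $[1,d(u)]$ and the interval at an endpoint of an edge colored $t=W(G)$ must be $[t-d(v)+1,t]$, the step bound $|c(e_{i+1})-c(e_i)|\le\Delta(G)-1$ at each interior vertex of a shortest $u$--$v$ path telescopes as you claim, and the case $u=v$ is handled. Note that this paper does not actually prove Theorem~\ref{mytheorem2}; it is quoted without proof from Asratian and Kamalian \cite{b2}, so there is no in-paper argument to compare against --- your proof is the standard one (propagating the interval condition along a shortest path between an edge of color $1$ and an edge of color $W(G)$), which is essentially how the original bound is established.
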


\begin{theorem}
\label{mytheorem3} If $G$ is a connected bipartite graph and $G\in
\mathfrak{N}$, then
\begin{center}
$W(G)\leq \mathrm{diam}(G)\left(\Delta(G) -1\right) +1$.
\end{center}
\end{theorem}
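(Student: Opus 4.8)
The plan is to run the distance argument behind Theorem~\ref{mytheorem2}, but to extract one extra unit of savings from the bipartite structure. Fix an interval $W(G)$-coloring and, for each vertex $v$, let $s(v)$ and $S(v)$ denote the smallest and the largest color appearing on the edges incident to $v$. Since these colors form an interval of length $\deg(v)$, we have $S(v)-s(v)=\deg(v)-1\le \Delta(G)-1$. The engine of the proof is the following Lipschitz-type estimate: \emph{if $uv\in E(G)$, then $|S(u)-S(v)|\le \Delta(G)-1$.} Indeed, if $uv$ carries color $c$, then $c\le S(u)$ and $c\ge s(v)=S(v)-(\deg(v)-1)$, so $S(v)-S(u)\le \Delta(G)-1$; the reverse inequality follows by symmetry. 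Consequently $S$ changes by at most $\Delta(G)-1$ along each edge, whence $S(b)-S(a)\le \mathrm{dist}(a,b)\,(\Delta(G)-1)$ for all vertices $a,b$ by summing along a shortest path.

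Next I would fix the two natural anchors. Because all colors are used, there is an edge $xy$ colored $W(G)$ and an edge $pq$ colored $1$. As $W(G)$ is the global maximum and $1$ the global minimum, we get $S(x)=S(y)=W(G)$ and $s(p)=1$, the latter giving $S(p)=\deg(p)\le \Delta(G)$. Feeding $a=p$, $b=x$ into the Lipschitz bound already yields $W(G)=S(x)\le \Delta(G)+\mathrm{dist}(p,x)(\Delta(G)-1)$, which, with $\mathrm{dist}(p,x)\le \mathrm{diam}(G)$, recovers only Theorem~\ref{mytheorem2}. The bipartite gain comes from replacing $\mathrm{dist}(p,x)\le\mathrm{diam}(G)$ by $\mathrm{dist}(p,x)\le\mathrm{diam}(G)-1$ after a suitable choice of top anchor.

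This is the crux, and the only place bipartiteness is used. Since $x$ and $y$ are adjacent, $\mathrm{dist}(p,x)$ and $\mathrm{dist}(p,y)$ differ by at most $1$; since $G$ is bipartite, $x$ and $y$ lie in opposite parts, so these two distances have opposite parities and therefore differ by \emph{exactly} $1$. Hence $\min\{\mathrm{dist}(p,x),\mathrm{dist}(p,y)\}\le \mathrm{diam}(G)-1$, and because $S(x)=S(y)=W(G)$ either endpoint may serve as the top anchor, I may relabel so that $\mathrm{dist}(p,x)\le \mathrm{diam}(G)-1$. Then
\[
W(G)=S(x)\le S(p)+\mathrm{dist}(p,x)\,(\Delta(G)-1)\le \Delta(G)+(\mathrm{diam}(G)-1)(\Delta(G)-1)=\mathrm{diam}(G)(\Delta(G)-1)+1 .
\]
The main obstacle is conceptual rather than computational: isolating precisely where bipartiteness buys the single saved step. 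In a non-bipartite graph an odd cycle can force $\mathrm{dist}(p,x)=\mathrm{dist}(p,y)$, the parity argument collapses, and one is thrown back to the weaker Theorem~\ref{mytheorem2}; the bipartite hypothesis is exactly what guarantees that one of the two endpoints of the top edge is reachable from $p$ in at most $\mathrm{diam}(G)-1$ steps.
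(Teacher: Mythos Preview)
Your argument is correct. The Lipschitz bound $|S(u)-S(v)|\le\Delta(G)-1$ along edges, together with the parity observation that adjacent vertices in a bipartite graph lie at distances of opposite parity from any fixed vertex, is exactly the mechanism that yields the improvement from $(\mathrm{diam}(G)+1)(\Delta(G)-1)+1$ to $\mathrm{diam}(G)(\Delta(G)-1)+1$.

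Note, however, that the present paper does not itself prove Theorem~\ref{mytheorem3}; it is quoted from Asratian and Kamalian~\cite{b2} as background. So there is no ``paper's own proof'' to compare against here. That said, your proof is essentially the classical Asratian--Kamalian argument: their original proof also tracks the interval endpoints along a shortest path and exploits the bipartite parity to shave one step off the diameter bound. Your write-up isolates the role of bipartiteness very cleanly; the only cosmetic remark is that the edge case $p\in\{x,y\}$ (where $\mathrm{dist}(p,x)$ or $\mathrm{dist}(p,y)$ is zero) is harmless but could be acknowledged in a full proof.
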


Recently, Kamalian and Petrosyan \cite{b16} showed that these upper
bounds cannot be significantly improved.

In \cite{b13}, Kamalian investigated interval colorings of complete
bipartite graphs and trees. In particular, he proved the following:

\begin{theorem}
\label{mytheorem4} For any $r,s\in \mathbb{N}$, the complete
bipartite graph $K_{r,s}$ is interval colorable, and

\begin{description}
\item[(1)] $w\left(K_{r,s}\right)=r+s-\gcd(r,s)$,

\item[(2)] $W\left(K_{r,s}\right)=r+s-1$,

\item[(3)] if $w\left(K_{r,s}\right)\leq t\leq W\left(K_{r,s}\right)$, then $K_{r,s}$
has an interval $t$-coloring.
\end{description}
\end{theorem}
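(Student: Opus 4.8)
The plan is to establish the four assertions essentially independently. Throughout, write $X=\{x_1,\dots,x_r\}$ and $Y=\{y_1,\dots,y_s\}$ for the two parts of $K_{r,s}$, so that $\deg x_i=s$, $\deg y_j=r$ and $\Delta(K_{r,s})=\max(r,s)$, and set $d=\gcd(r,s)$. I would start with colorability and with (2). The assignment $\varphi(x_iy_j)=i+j-1$ uses exactly the colors $1,\dots,r+s-1$, gives the interval $[i,i+s-1]$ at $x_i$ and $[j,j+r-1]$ at $y_j$, hence is an interval $(r+s-1)$-coloring; thus $K_{r,s}\in\mathfrak{N}$ and $W(K_{r,s})\ge r+s-1$. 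For the matching upper bound I would argue directly, since Theorem~\ref{mytheorem3} only yields the weaker $2\max(r,s)-1$ (and Theorem~\ref{mytheorem1} does not apply, as $K_{r,s}$ is regular only when $r=s$): in any interval $t$-coloring, color $1$ appears at some $x\in X$, and being the least color it forces the colors at $x$ to be $[1,s]$; likewise color $t$ appears at some $y\in Y$ and the colors at $y$ are $[t-r+1,t]$; since $xy$ is an edge, $[1,s]\cap[t-r+1,t]\neq\varnothing$, so $t\le r+s-1$. Hence $W(K_{r,s})=r+s-1$.

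For (1) I would first produce an interval $(r+s-d)$-coloring. The idea is to prescribe the vertex intervals so that their coverage is as flat as possible and then fill in the edges. Concretely, one groups $X$ and $Y$ into several \emph{overlapping} classes whose prescribed intervals have lengths $s$ and $r$ respectively, with consecutive $X$-intervals shifted by $r/d$ and consecutive $Y$-intervals by $s/d$, so that the intervals overlap and their union is exactly $[1,r+s-d]$; one then checks by a greedy assignment (or via Hall's theorem, since at each $y\in Y$ the admissible colors of its $r$ edges to the various $X$-classes can be distributed so as to occupy the required $r$ consecutive values) that a proper edge-coloring realizing these prescribed intervals exists. This gives $w(K_{r,s})\le r+s-d$.

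The reverse inequality $w(K_{r,s})\ge r+s-d$ is the crux, and I expect it to be the main obstacle. Given an interval $t$-coloring, let $c(k)$ be the number of edges colored $k$; then $c(k)$ equals both the number of $X$-vertices and the number of $Y$-vertices whose interval contains $k$, so $c$ is simultaneously the coverage function of $r$ intervals of length $s$ and of $s$ intervals of length $r$ inside $[1,t]$, with $1\le c(k)\le\min(r,s)$, $\sum_{k=1}^t c(k)=rs$, and with the extreme interval endpoints of each length equal to $1,\ t-s+1$ (resp.\ $1,\ t-r+1$). Assume without loss of generality $r\le s$. If $t\le 2r-1$, some point is covered by all $s$ intervals of length $r$, forcing $s\le\min(r,s)$, hence $r=s$ and $t\ge r=r+s-d$. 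If $t\ge 2r$, then every interval of length $s$ covers the segment $[t-s+1,s]$, so on that segment $c\equiv r$; transcribing this through the length-$r$ intervals shows that the multiplicities of their left endpoints are periodic with period $r$ on an initial segment whose length, when $t<r+s-d$, is incompatible with their prescribed total $s$ and the value $r$ of one period-window — which yields the contradiction, with the number $\gcd(r,s)$ entering precisely as the obstruction to matching this period-$r$ rigidity from the $X$-side against the analogous rigidity from the $Y$-side. I would isolate this as a lemma on pairs of interval systems with equal coverage; handling the ranges of $r,s,t$ carefully (the segments on which the sliding-window formulas for $c$ are valid) is the delicate part.

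Finally, (3) is an interpolation statement, vacuous when $d=1$, so assume $d\ge 2$. For each $t$ with $r+s-d\le t\le r+s-1$, write $t=r+s-e$ with $1\le e\le d$ and give an explicit interval $t$-coloring by the same overlapping-classes recipe used for (1), but with the overlaps tuned so that the union of the prescribed intervals is $[1,\,r+s-e]$; the extreme cases $e=1$ and $e=d$ recover the colorings of (2) and (1). Verifying that this family consists of interval colorings is routine once the class sizes and shifts are written down, so the only genuinely new work in the theorem is the lower bound in (1).
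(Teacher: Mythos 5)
First, note that the paper does not prove this statement at all: Theorem~\ref{mytheorem4} is quoted from Kamalian's preprint \cite{b13}, so there is no in-paper argument to compare yours with; your proposal has to stand on its own. The parts that do stand are the membership statement and item (2): the coloring $\varphi(x_iy_j)=i+j-1$ is a correct interval $(r+s-1)$-coloring, and your upper-bound argument (color $1$ forces palette $[1,s]$ at an $X$-endpoint, color $t$ forces palette $[t-r+1,t]$ at a $Y$-endpoint, and these must meet on the edge joining them) is complete and correct.

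Items (1) and (3), however, are not established. For the lower bound $w(K_{r,s})\ge r+s-\gcd(r,s)$ — which you yourself identify as the crux — your case $t\le 2r-1$ is fine, but in the main case the decisive step is only gestured at: ``the multiplicities of their left endpoints are periodic with period $r$ \dots\ incompatible \dots\ with $\gcd(r,s)$ entering precisely as the obstruction'' is the entire content of the inequality and is nowhere proved. One concrete way to close it: let $A(k),B(k)$ be the numbers of $X$- and $Y$-palettes starting at $k$ and $F(z)=\sum_k A(k)z^k$, $G(z)=\sum_k B(k)z^k$; pointwise equality of the two coverage functions reads $F(z)(z^s-1)=G(z)(z^r-1)$, and since $\gcd(z^r-1,z^s-1)=z^d-1$ with the two quotients coprime, $(z^r-1)/(z^d-1)$ divides $F$; comparing the lowest exponent ($=1$, since color $1$ is used) with the highest ($=t-s+1$) gives $t\ge r+s-d$. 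For the upper bound and for (3), your prescription is both wrong in detail and incomplete in substance: shifting the $X$-palettes by $r/d$ and the $Y$-palettes by $s/d$ does not produce union $[1,r+s-d]$ in general (try $r=6$, $s=4$, $d=2$); the correct prescription is $r/d$ classes of $X$, each of size $d$, with starts $1,1+d,\dots,1+r-d$, and $s/d$ classes of $Y$, each of size $d$, with starts $1,1+d,\dots,1+s-d$. More importantly, ``greedy or Hall'' does not by itself realize prescribed palettes: equal coverage of the prescribed intervals is not sufficient for realizability (in $K_{2,2}$ prescribe $[1,2]$ and $[3,4]$ on each side — coverages agree, yet two edges have empty admissible sets), so an explicit edge-coloring must be exhibited. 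With the correct prescription this is easy — between class $X_i$ and class $Y_j$ color the copy of $K_{d,d}$ with the $d$ colors $[(i+j-2)d+1,(i+j-1)d]$ via a Latin square — and a similarly explicit family is needed for every $t$ with $r+s-d\le t\le r+s-1$ in (3); as written, neither is supplied.
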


In \cite{b21}, Petrosyan investigated interval colorings of complete
graphs and $n$-dimensional cubes. In particular, he obtained the
following two results.

\begin{theorem}
\label{mytheorem5} If $n=p2^{q}$, where $p$ is odd and $q$ is
nonnegative, then
\begin{center}
$W\left(K_{2n}\right)\geq 4n-2-p-q$.
\end{center}
\end{theorem}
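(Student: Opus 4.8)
The plan is to prove the bound by an explicit construction that is recursive in the $2$-adic valuation of $n$. Write $n = p\,2^{q}$ with $p$ odd, so that $2n = p\,2^{q+1}$. The backbone is that $K_{2m}$ splits into two vertex-disjoint copies of $K_m$, on sets $A$ and $B$, together with the complete bipartite graph $K_{m,m}$ joining $A$ to $B$. The central step I would establish is the reduction: \emph{if $m$ is even, then $W(K_{2m}) \ge W(K_m) + 2m-1$} (here $K_m \in \mathfrak{N}$ since $\chi'(K_m) = m-1 = \Delta(K_m)$, so Theorem~\ref{mytheorem1} applies, and $W(K_m) \le 2m-3$ by Theorem~\ref{mytheorem2}). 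Applying this reduction $q$ times reduces the problem to $W(K_{2p})$ with $p$ odd, which I would handle by a separate direct construction giving $W(K_{2p}) \ge 3p-2$. These combine to the claimed value, since $3p - 2 + \sum_{j=1}^{q}\bigl(p\,2^{j+1}-1\bigr) = 4p\,2^{q} - p - q - 2 = 4n - p - q - 2$.

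For the reduction, fix an interval $W(K_m)$-colouring $\varphi$ of $K_m$ on $A = \{a_1,\dots,a_m\}$; since every colour is used, $a_i$ sees a length-$(m-1)$ interval $[\alpha_i,\alpha_i+m-2]$ with $\min_i \alpha_i = 1$ and $\max_i \alpha_i = W(K_m) - m + 2 =: d+1$, where $0 \le d \le m-2$. Now colour $K_{2m}$ as follows: keep $\varphi$ on the copy of $K_m$ on $A$; use the shifted colouring $\varphi + (2m-1)$ on the copy on $B = \{b_1,\dots,b_m\}$, with $b_i$ paired to $a_i$, so $b_i$ sees $[\alpha_i+2m-1,\alpha_i+3m-3]$; and colour $K_{m,m}$ between $A$ and $B$ so that both $a_i$ and $b_i$ see the length-$m$ interval $[\alpha_i+m-1,\alpha_i+2m-2]$. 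Then at each $a_i$ the within-$A$ block and the bipartite block abut and give $[\alpha_i,\alpha_i+2m-2]$, an interval of length $2m-1 = \Delta(K_{2m})$, and symmetrically at each $b_i$; the three edge-classes at a vertex use pairwise disjoint colour sets, so the colouring is proper. A short check (using $0 \le d \le m-2$) shows the colours used are exactly $[1,d+m-1] \cup [m,d+2m-1] \cup [2m,d+3m-2] = [1,d+3m-2]$, and $d + 3m - 2 = W(K_m) + 2m - 1$, as wanted.

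The delicate point is the bipartite colouring of $K_{m,m}$ with the prescribed vertex colour-sets. Its existence reduces to the fact that a bipartite graph has a proper edge-colouring in which each vertex sees exactly a given interval of colours whose size equals its degree, provided that for each colour the two sides contain the same number of vertices whose interval contains it; here that balance is automatic, since pairing $b_i$ with $a_i$ makes the multisets $\{\alpha_i\}$ on the two sides identical. This is in the spirit of Theorem~\ref{mytheorem4} and of de~Werra-type equitable edge-colourings, and can also be produced concretely by a rotation/Latin-square scheme after sorting vertices by $\alpha_i$; I expect this verification to be the main obstacle. It remains to treat the base case: for odd $p$, I would construct an interval $(3p-2)$-colouring of $K_{2p}$ directly, partitioning the $2p$ vertices into $p$ pairs and assigning to the $k$-th pair the target interval $[k,k+2p-2]$ (so that $\bigcup_{k=1}^{p}[k,k+2p-2] = [1,3p-2]$), then realising these targets by a suitable explicit scheme; the parity of $p$ is precisely what makes this base colouring exist (for even $p$ one uses the reduction instead). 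Combining the reduction with this base case via the identity above yields $W(K_{2n}) \ge 4n-2-p-q$.
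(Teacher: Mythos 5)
Your overall architecture --- splitting $K_{2m}$ into two copies of $K_m$ plus $K_{m,m}$, the doubling inequality $W(K_{2m})\ge W(K_m)+2m-1$ for even $m$, a separate base construction giving $W(K_{2p})\ge 3p-2$ for odd $p$, and the arithmetic combining the two --- is the right strategy, and it is essentially how this bound is obtained in \cite{b21}; note that the present paper only quotes Theorem~\ref{mytheorem5} from that source, so there is no in-paper proof to compare against. The vertex-level bookkeeping in your reduction (the abutting intervals at $a_i$ and $b_i$, the use of $W(K_m)\le 2m-3$ to get $d\le m-2$, and Lemma~\ref{mylemma1} to see that all colors occur) is fine, and your arithmetic $3p-2+\sum_{j=1}^{q}(p2^{j+1}-1)=4n-p-q-2$ checks out.

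The genuine gap is the step you yourself flag as delicate: the existence of a proper edge-coloring of $K_{m,m}$ in which $a_i$ and $b_i$ both see exactly $[\alpha_i+m-1,\alpha_i+2m-2]$. The principle you invoke for it --- that balance of the palette multisets on the two sides suffices --- is false, even for complete bipartite graphs with interval palettes of size equal to the degree: on $K_{3,3}$ assign to $a_1,a_2,a_3$ the palettes $[1,3],[1,3],[3,5]$ and the same palettes to $b_1,b_2,b_3$; every color is perfectly balanced, yet the edges $a_1b_3$ and $a_2b_3$ are each forced to receive color $3$ at the common vertex $b_3$, so no such coloring exists. Hence feasibility depends on the distribution of the left endpoints $\alpha_i$, not merely on balance: Hall-type conditions such as $|\{i:\alpha_i\le b\}|\le b+m-\alpha_j$ (for all $j$ and $b\le\alpha_j$) must hold and must be shown sufficient, or an explicit construction must be given, for the palettes of an \emph{arbitrary} interval $W(K_m)$-coloring --- about whose structure you know nothing beyond the spread bound $d\le m-2$, which sits only one below the threshold at which obstructions like the one above appear. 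The natural repair is not to start from an arbitrary optimal coloring but to carry an explicitly constructed coloring with controlled palette starts through the doubling step, which is what the construction in \cite{b21} does. The base case is a second, smaller gap: an interval $(3p-2)$-coloring of $K_{2p}$ realizing the pair palettes $[k,k+2p-2]$ is asserted but never constructed, and it is itself a nontrivial piece of the argument. Until these two constructions are supplied, the bound is not proved.
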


\begin{theorem}
\label{mytheorem6} $W\left(Q_{n}\right)\geq \frac{n(n+1)}{2}$ for
any $n\in\mathbb{N}$.
\end{theorem}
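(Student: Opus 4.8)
The plan is to prove the bound by induction on $n$, exhibiting for each $n$ an explicit interval $\frac{n(n+1)}{2}$-coloring of $Q_n$ built from one of $Q_{n-1}$ via the decomposition $Q_n=Q_{n-1}\square K_2$. Write $N=\frac{(n-1)n}{2}$, so that $N+n=\frac{n(n+1)}{2}$ is the target number of colors. For the base case $n=1$ the graph $Q_1=K_2$ admits the interval $1$-coloring giving color $1$ to its single edge.

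For the inductive step, assume $Q_{n-1}$ has an interval $N$-coloring $\alpha$, and for a vertex $v$ of $Q_{n-1}$ denote by $[\underline{\alpha}(v),\overline{\alpha}(v)]$ the interval of colors on the edges incident to $v$; since $Q_{n-1}$ is $(n-1)$-regular this interval has exactly $n-1$ elements, i.e. $\overline{\alpha}(v)=\underline{\alpha}(v)+n-2$. View $Q_n$ as two copies $Q_{n-1}^{0}$, $Q_{n-1}^{1}$ of $Q_{n-1}$ joined by the perfect matching $M=\{\,v^{0}v^{1}\,\}$. I would then define a coloring $\gamma$ of $Q_n$ by: using $\alpha$ on $Q_{n-1}^{0}$; using the shifted coloring $\alpha+n$ (add $n$ to every color) on $Q_{n-1}^{1}$; and coloring the matching edge $v^{0}v^{1}$ with $\overline{\alpha}(v)+1$.

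Next I would verify that $\gamma$ is the desired coloring. It is proper because $\alpha$ and $\alpha+n$ are proper on the two copies, $M$ is a matching, and at $v^{0}$ the color $\overline{\alpha}(v)+1$ lies outside $[\underline{\alpha}(v),\overline{\alpha}(v)]$, while at $v^{1}$ the color $\overline{\alpha}(v)+1=\underline{\alpha}(v)+n-1$ lies outside $[\underline{\alpha}(v)+n,\overline{\alpha}(v)+n]$. The colors at $v^{0}$ form $[\underline{\alpha}(v),\overline{\alpha}(v)]\cup\{\overline{\alpha}(v)+1\}=[\underline{\alpha}(v),\underline{\alpha}(v)+n-1]$ and those at $v^{1}$ form $\{\overline{\alpha}(v)+1\}\cup[\underline{\alpha}(v)+n,\overline{\alpha}(v)+n]=[\underline{\alpha}(v)+n-1,\underline{\alpha}(v)+2n-2]$, both intervals of $n$ consecutive integers, so $\gamma$ is an interval coloring. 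Finally all colors $1,\dots,N+n$ occur: $\alpha$ already uses $1,\dots,N$ on $Q_{n-1}^{0}$ and $\alpha+n$ uses $n+1,\dots,N+n$ on $Q_{n-1}^{1}$; since $N\ge n-1$ these two ranges together with the matching colors (which range over $[n,N+1]$) cover $\{1,\dots,N+n\}$. Hence $\gamma$ is an interval $(N+n)$-coloring and $W(Q_n)\ge N+n=\frac{n(n+1)}{2}$.

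The only delicate point is the bookkeeping that makes the matching edge sit flush against the shifted interval at $v^{1}$: this works precisely because $Q_{n-1}$ is regular, so every vertex interval of $\alpha$ has length exactly $n-1$ and $\overline{\alpha}(v)+1=\underline{\alpha}(v)+n-1$ is exactly one below the bottom $\underline{\alpha}(v)+n$ of the shifted interval. Equivalently, the estimate can be read off in one line from the Cartesian-product bound $W(G\square P_m)\ge W(G)+W(P_m)+(m-1)r$ stated in the abstract, taking $m=2$, $G=Q_{n-1}$ (which lies in $\mathfrak{N}$ since it is regular and bipartite, hence $\chi'(Q_{n-1})=\Delta(Q_{n-1})$), and $r=n-1$, so that $W(Q_n)\ge W(Q_{n-1})+1+(n-1)=W(Q_{n-1})+n$, and iterating from $W(Q_1)=1$.
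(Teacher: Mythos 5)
Your construction is correct: two copies of an extremal interval coloring $\alpha$ of $Q_{n-1}$, the second shifted by $n$, joined by matching edges colored $\overline{\alpha}(v)+1$, do yield an interval $\frac{n(n+1)}{2}$-coloring of $Q_{n}$, and the induction closes. Note, however, that this paper does not itself prove Theorem \ref{mytheorem6}; it is imported from \cite{b21}. What you have written is essentially the $m=2$ case of the construction used here to prove Theorem \ref{mytheorem9} (shift the next copy of the regular factor by $r+1=n$ and color each matching edge one above the top of the lower copy's vertex interval), iterated $n-1$ times via $Q_{n}=Q_{n-1}\square K_{2}$ -- exactly the reduction you point out in your closing remark, so your argument and the paper's machinery coincide in substance. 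The only loose spot is the final coverage claim: the matching colors need not sweep out all of $[n,N+1]$, so "together with the matching colors" is not quite a proof as stated. It is cleaner either to observe that $N=\frac{(n-1)n}{2}\geq n$ for $n\geq 3$, so the colors of the two copies alone already cover $[1,N+n]$ (with $n\leq 2$ checked by hand), or simply to invoke connectivity of $Q_{n}$ plus the facts that every vertex sees an interval and that colors $1$ and $N+n$ occur, which is precisely Lemma \ref{mylemma1} of the paper.
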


The $NP$-completeness of the problem of the existence of an interval
edge-coloring of an arbitrary bipartite graph was shown in
\cite{b24}. A similar result for regular graphs was obtained in
\cite{b1,b2}. In \cite{b19,b22,b23}, interval edge-colorings of
various products of graphs were investigated. Some interesting
results on interval colorings were also obtained in
\cite{b3,b4,b6,b7,b8,b9,b10,b14,b15,b16,b17,b18,b19,b20}. Surveys on
this topic can be found in some books \cite{b3,b12,b19}.

In this paper we focus only on interval edge-colorings of Cartesian
products of graphs.

\bigskip

\section{Notations, definitions and auxiliary results}\

Throughout this paper all graphs are finite, undirected, and have no
loops or multiple edges. Let $V(G)$ and $E(G)$ denote the sets of
vertices and edges of $G$, respectively. The degree of a vertex $v$
in $G$ is denoted by $d_{G}(v)$, the maximum degree of $G$ by
$\Delta (G)$, and the chromatic index of $G$ by $\chi^{\prime }(G)$.
If $G$ is a connected graph, then the distance between two vertices
$u$ and $v$ in $G$, we denote by $d(u,v)$, and the diameter of $G$
by $\mathrm{diam}(G)$. We use the standard notations $P_{n}$,
$C_{n}$, $K_{n}$ and $Q_{n}$ for the simple path, simple cycle,
complete graph on vertices and the $n$-dimensional cube,
respectively. A partial edge-coloring of a graph $G$ is a coloring
of some of the edges of $G$ such that no two adjacent edges receive
the same color. If $\alpha$ is a partial edge-coloring of $G$ and
$v\in V(G)$, then $S\left(v,\alpha\right)$ denotes the set of colors
appearing on colored edges incident to $v$.

Let $[t]$ denote the set of the first $t$ natural numbers. Let
$\left\lfloor a\right\rfloor $ ($\left\lceil a\right\rceil$) denote
the largest (least) integer less (greater) than or equal to $a$. For
two positive integers $a$ and $b$ with $a\leq b$, the set
$\left\{a,\ldots ,b\right\}$ is denoted by $\left[a,b\right]$. The
terms and concepts that we do not define can be found in \cite{b25}.

Let $G$ and $H$ be graphs. The Cartesian product $G\square H$ is
defined as follows:
\begin{center}
$V(G\square H)=V(G)\times V(H)$,
\end{center}
\begin{center}
$E(G\square H)=\{(u_{1},v_{1})(u_{2},v_{2})\colon\,
u_{1}=u_{2}~and~v_{1}v_{2}\in E(H)~or~v_{1}=v_{2}~and~ u_{1}u_{2}\in
E(G)\}$.
\end{center}

Clearly, if $G$ and $H$ are connected graphs, then $G\square H$ is
connected, too. Moreover, $\Delta(G\square H)=\Delta(G)+\Delta(H)$
and $\mathrm{diam}(G\square H)=\mathrm{diam}(G)+\mathrm{diam}(H)$.
The $k$-dimensional grid $G(n_{1},n_{2},\ldots,n_{k})$, $n_{i}\in
\mathbb{N}$ is the Cartesian product of paths $P_{n_{1}}\square
P_{n_{2}}\square\cdots\square P_{n_{k}}$. The cylinder
$C(n_{1},n_{2})$ is the Cartesian product $P_{n_{1}}\square
C_{n_{2}}$, and the torus $T(n_{1},n_{2})$ is the Cartesian product
of cycles $C_{n_{1}}\square C_{n_{2}}$.\\

We also need the following two lemmas.

\begin{lemma}
\label{mylemma1} If $\alpha$ is an edge-coloring of a connected
graph $G$ with colors $1,\ldots,t$ such that the edges incident to
each vertex $v\in V(G)$ are colored by distinct and consecutive
colors, and $\min_{e\in E(G)}\{\alpha(e)\}=1$, $\max_{e\in
E(G)}\{\alpha(e)\}=t$, then $\alpha$ is an interval $t$-coloring of
$G$.
\end{lemma}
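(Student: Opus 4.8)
The statement asserts that an edge-coloring $\alpha$ of a connected graph $G$ using exactly the colors $1,\ldots,t$, in which the colors at every vertex are distinct and consecutive, is automatically an interval $t$-coloring, provided the smallest color used is $1$ and the largest is $t$. The only thing that needs checking against the definition of interval $t$-coloring is that \emph{all} of the colors $1,2,\ldots,t$ actually appear; the "distinct and consecutive at each vertex" hypothesis is given verbatim. So the whole content of the lemma is a connectivity argument: no color in $[1,t]$ can be "skipped."

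The plan is to argue by contradiction. Suppose some color $i$ with $1<i<t$ is missing from $\alpha$, i.e.\ no edge of $G$ receives color $i$. Since $\min_{e}\alpha(e)=1$ and $\max_{e}\alpha(e)=t$, both colors $1$ and $t$ are used, so $i$ genuinely lies strictly between two used colors. Partition the edge set (or better, work with vertices) according to which side of $i$ the incident colors fall on. Concretely, since the colors at any vertex $v$ form an interval $[a_v,b_v]$ and color $i$ is absent globally, that interval cannot contain $i$; hence for every vertex $v$ we have either $b_v<i$ (call $v$ "low") or $a_v>i$ (call $v$ "high"). Every vertex is thus classified as low or high, and both classes are nonempty: an endpoint of an edge colored $1$ is low, and an endpoint of an edge colored $t$ is high.

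Now I use connectivity: since $G$ is connected, there is an edge $uv$ with $u$ low and $v$ high. But $uv$ receives some color $c=\alpha(uv)$. Being incident to the low vertex $u$ forces $c\le b_u<i$, while being incident to the high vertex $v$ forces $c\ge a_v>i$, so $c<i<c$, a contradiction. Therefore no color strictly between $1$ and $t$ is missing, every color in $[1,t]$ is used, and $\alpha$ is by definition an interval $t$-coloring of $G$.

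There is no real obstacle here; the lemma is essentially the observation that the set of used colors, being the union of intervals indexed by vertices of a connected graph in which consecutive (adjacent) vertices share at least one color, is itself an interval. The one point to state carefully is why adjacent vertices have overlapping color-intervals—this is exactly because the edge joining them has a single color lying in both intervals—and then the "low/high" dichotomy makes the contradiction immediate. If one prefers to avoid the contradiction, one can phrase it directly: define a partition of $V(G)$ by $V_{<i}$ and $V_{>i}$ as above whenever $i\notin\alpha(E(G))$; no edge can join the two parts, contradicting connectedness unless one part is empty, which rules out $i$ being strictly between $1$ and $t$.
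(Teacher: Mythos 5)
Your proof is correct, and it identifies exactly the right point: the only thing to verify is that no color in $[1,t]$ is skipped. Your route differs in packaging from the paper's: the paper argues directly, taking a vertex $u$ with $1\in S(u,\alpha)$ and a vertex $w$ with $t\in S(w,\alpha)$, choosing a $u,w$-path, and observing that consecutive vertices on the path have overlapping color intervals (they share the color of the joining edge), so the union of these intervals covers $[1,t]$; you instead argue by contradiction, partitioning the vertices into ``low'' and ``high'' classes relative to a hypothetically missing color $i$ and noting that no edge can cross this cut, contradicting connectedness. Both arguments rest on the same key observation --- each vertex's color set is an interval, and the edge joining two adjacent vertices puts a common color in both of their intervals --- so the substance is the same; your cut formulation avoids selecting an explicit path and makes the contradiction immediate, while the paper's path-chaining version is constructive and exhibits which vertex intervals cover $[1,t]$. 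No gaps.
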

\begin{proof} For the proof of the lemma, it suffices to show that all
colors are used in the coloring $\alpha$ of $G$.

Let $u$ and $w$ be vertices such that $1\in S(u,\alpha)$ and $t\in
S(w,\alpha)$. Also, let $P=v_{1},\ldots,v_{k}$, where $u=v_{1}$ and
$v_{k}=w$ be a $u,w$-path in $G$. If $k=1$, then $t\in S(u,\alpha)$
and all colors appear on edges incident to $u$. Assume that $k\geq
2$. The sets $S(v_{i},\alpha)$ for $v_{i}\in V(P)$ are intervals,
and for $2\leq i\leq k$, intervals $S(v_{i-1},\alpha)$ and
$S(v_{i},\alpha)$ share a color. Thus, the sets
$S(v_{1},\alpha),\ldots,S(v_{k},\alpha)$ cover $[1,t]$.~$\square$
\end{proof}

The next lemma was proved by Behzad and Mahmoodian in \cite{b5}.

\begin{lemma}
\label{mylemma2} If both $G$ and $H$ have at least $3$ vertices,
then the Cartesian product $G\square H$ is planar if and only if
$G\square H=G(m,n)$ or $G\square H=C(m,n)$.
\end{lemma}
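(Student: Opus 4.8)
The plan is to treat the two implications separately. The backward direction is immediate: the two-dimensional grid $G(m,n)=P_m\square P_n$ has the obvious planar lattice drawing, and the cylinder $C(m,n)=P_m\square C_n$ is drawn as $m$ concentric copies of $C_n$ joined by radial edges, so both $G(m,n)$ and $C(m,n)$ are planar irrespective of the sizes involved. All the content is in the forward direction.

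For the forward direction I would first record the auxiliary fact that the Cartesian product is monotone for the minor relation: if $A$ is a minor of $G$ and $B$ is a minor of $H$, then $A\square B$ is a minor of $G\square H$. Indeed, deleting a vertex of $G$ corresponds to deleting a whole fibre of $G\square H$, deleting an edge of $G$ to deleting the corresponding perfect matching between two fibres, and contracting an edge of $G$ to contracting that matching; likewise for $H$. Since planarity is closed under taking minors, the whole problem reduces to two small non-planarity verifications: (i) $K_{1,3}\square P_3$ is non-planar, and (ii) $C_3\square C_3$ is non-planar. For (i) I would exhibit a subdivision of $K_{3,3}$ explicitly: writing $c$ for the centre and $1,2,3$ for the leaves of $K_{1,3}$, and $a\,b\,d$ for the path $P_3$, take the branch classes $\{(1,b),(2,b),(3,b)\}$ and $\{(c,a),(c,b),(c,d)\}$; the three edges at $(c,b)$ together with the six internally disjoint paths $(i,b)\,(i,a)\,(c,a)$ and $(i,b)\,(i,d)\,(c,d)$, $i=1,2,3$, realize $K_{3,3}$. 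For (ii), $C_3\square C_3$ is $4$-regular on $9$ vertices with $18$ edges, so a hypothetical planar embedding would have $11$ faces; double counting edge-face incidences forces at least $8$ triangular faces, but the only triangles of $C_3\square C_3$ are its $C_3$-fibres (three in each direction) and each of these bounds at most one face, a contradiction. (Alternatively, $C_3\square C_3$ also contains an explicit $K_{3,3}$-subdivision.)

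Finally I would assemble the argument, assuming as usual that $G$ and $H$ are connected (otherwise $G\square H$ is disconnected and is neither a grid nor a cylinder). Any connected graph on at least $3$ vertices contains $P_3$ as a subgraph, and any graph of maximum degree at least $3$ contains $K_{1,3}$ as a subgraph, hence as a minor. So if $\Delta(G)\ge 3$, then $G\square H$ has $K_{1,3}\square P_3$ as a minor and is non-planar by (i); thus planarity of $G\square H$ forces $\Delta(G)\le 2$ and, symmetrically, $\Delta(H)\le 2$. A connected graph on at least $3$ vertices with maximum degree at most $2$ is a path $P_n$ or a cycle $C_n$ with $n\ge 3$. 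If both $G$ and $H$ were cycles, then, since every cycle has $C_3$ as a minor, $G\square H$ would have $C_3\square C_3$ as a minor, contradicting (ii). Hence either both factors are paths, giving $G\square H=G(m,n)$, or exactly one is a cycle, giving $G\square H=C(m,n)$. The only real obstacle is the pair of non-planarity checks in (i) and (ii) and setting up the product–minor lemma cleanly; once these are in hand the case analysis is routine.
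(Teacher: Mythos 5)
Your argument is correct, but note that the paper does not prove this lemma at all: it is quoted from Behzad and Mahmoodian \cite{b5}, so there is no in-paper proof to compare against. What you supply is a self-contained version of the classical argument: the backward direction by explicit drawings, and the forward direction via the observation that the Cartesian product is monotone under minors (vertex/edge deletion in a factor corresponds to deleting a fibre or a matching, and contracting an edge of a factor corresponds to contracting the associated matching, which indeed yields $(G/e)\square H$), which reduces everything to the two non-planarity certificates. Both certificates check out: your $K_{3,3}$-subdivision in $K_{1,3}\square P_{3}$ is valid (the nine connecting paths are internally disjoint), and your Euler-count for $C_{3}\square C_{3}$ is sound, since the only triangles in a Cartesian product are the fibre triangles (a triangle cannot mix edges of the two directions), there are only six of them, each bounds at most one face in a connected graph larger than a triangle, while $V-E+F=2$ together with $\sum_{f}\deg(f)=2E=36$ forces at least eight triangular faces. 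The one point worth making explicit is the connectivity convention you flag: as literally stated the lemma needs $G$ and $H$ connected (e.g.\ a disconnected factor with three vertices gives a planar product that is neither a grid nor a cylinder), and this is exactly the hypothesis in Behzad--Mahmoodian and the setting in which the lemma is applied in Corollary \ref{mycorollary4}; with that reading, your case analysis ($\Delta\leq 2$ in both factors, hence each factor is a path or a cycle, and two cycles are excluded by $C_{3}\square C_{3}$) completes the proof correctly.
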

\bigskip

\section{The Cartesian product of regular graphs}\

Interval edge-colorings of Cartesian products of graphs were first
investigated by Giaro and Kubale in \cite{b7}, where they proved the
following:

\begin{theorem}
\label{mytheorem7} If $G\in \mathfrak{N}$, then $G\square P_{m}\in
\mathfrak{N}$ $(m\in \mathbb{N})$ and $G\square C_{2n}\in
\mathfrak{N}$ $(n\geq 2)$.
\end{theorem}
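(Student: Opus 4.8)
The plan is to construct an interval coloring of $G\square P_{m}$ (resp. $G\square C_{2n}$) directly from a fixed interval $t$-coloring $\alpha$ of $G$; we may assume $G$ is connected. The naive idea of combining $\alpha$ with an interval coloring of $P_{m}$ or $C_{2n}$ coordinatewise does not work in general, so the key observation is different: put \emph{the same} (slightly shifted) copy of $\alpha$ on \emph{every} copy of $G$, and exploit the fact that for a fixed edge $ij$ of $P_{m}$ (resp. $C_{2n}$) the corresponding edges $\{(u,i)(u,j):u\in V(G)\}$ of the product form a matching. These edges are pairwise non-adjacent, so their colors may be chosen one vertex at a time, constrained only by the interval condition at their two endpoints.

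Concretely, write $S(v,\alpha)=[\ell_{v},\ell_{v}+d_{G}(v)-1]$ and let $\beta=\alpha+1$, an edge-coloring of $G$ using colors $2,\dots,t+1$ with $S(v,\beta)=[\ell_{v}+1,\ell_{v}+d_{G}(v)]$; the point of the shift is that the color $\ell_{v}$ (which is $\ge 1$) now sits immediately below $S(v,\beta)$ and $\ell_{v}+d_{G}(v)+1$ (which is $\le t+2$) immediately above it. Color every copy $G_{i}$ of $G$ by $\beta$. At an interior vertex $(u,i)$ the two incident matching edges, together with the $d_{G}(u)$ edges of $G_{i}$ at $u$ (whose colors already form the interval $S(u,\beta)$), must form an interval; this forces those two matching edges to receive precisely the colors $\ell_{u}$ and $\ell_{u}+d_{G}(u)+1$. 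Consequently, along the sequence of matching edges through a fixed vertex $u$ the colors must \emph{alternate} between $\ell_{u}$ and $\ell_{u}+d_{G}(u)+1$. For $G\square P_{m}$ (with $m\ge 2$; the case $m=1$ is trivial) we assign them so, starting with $\ell_{u}$ on the edge $(u,1)(u,2)$. For $G\square C_{2n}$ the $2n$ matching edges through $u$ lie on a cycle, and the required alternation closes up consistently precisely because $2n$ is even; this is exactly where the hypothesis enters and is the reason odd cycles are excluded.

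It remains to verify that this yields an interval coloring. Properness and the interval property at each vertex are immediate from the description (at an interior $(u,i)$ the colors are $\{\ell_{u}\}\cup[\ell_{u}+1,\ell_{u}+d_{G}(u)]\cup\{\ell_{u}+d_{G}(u)+1\}$, and at an end vertex of $P_{m}$ one of the two outer colors is dropped). Every color in $\{2,\dots,t+1\}$ already occurs inside the copies, since $\beta$ uses all of them; color $1$ occurs as the color of the matching edge at a vertex $u$ with $\ell_{u}=1$, such a $u$ existing as an endpoint of a color-$1$ edge of $\alpha$; and the largest color $t+2$, when it occurs, appears at an endpoint of a color-$t$ edge of $\alpha$. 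Hence the set of used colors is exactly $[1,t+1]$ (for $m=2$) or $[1,t+2]$ (for $m\ge 3$, and for $C_{2n}$), with no gaps, so by Lemma~\ref{mylemma1} we obtain an interval coloring of the product; thus $G\square P_{m}\in\mathfrak{N}$ and $G\square C_{2n}\in\mathfrak{N}$.

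The main difficulty here is conceptual rather than computational: one must see that the right move is to reuse a single coloring of $G$ on all layers and to lean entirely on the freedom available in coloring the inter-layer matchings, and then notice that the per-vertex matching colors are \emph{forced} to alternate. Once that is in place, the only real structural point is that the alternation must be globally consistent, which for $C_{2n}$ is guaranteed exactly by the parity of the cycle — so this parity check, trivial as it looks, is the crux of the second half of the statement.
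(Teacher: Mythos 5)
Your construction is correct, but it is worth noting that this paper does not actually prove Theorem~\ref{mytheorem7}: the statement is quoted from Giaro and Kubale \cite{b7}, so there is no in-paper proof to match. The closest arguments here are the proofs of Theorems~\ref{mytheorem9} and~\ref{mytheorem10}, which (for regular $G$) shift an interval $W(G)$-coloring layer by layer by $(i-1)(r+1)$ and color each inter-layer edge with $\max S\left(v_{j}^{(i)},\beta\right)+1$, the point being to maximize the span. Your route is genuinely different: you keep one fixed coloring $\beta=\alpha+1$ on every layer and properly $2$-color each vertex's string of inter-layer edges with $\ell_{u}$ and $\ell_{u}+d_{G}(u)+1$ (alternating along $P_{m}$, and closing up on $C_{2n}$ exactly because the cycle is even). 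This works for arbitrary, not necessarily regular, $G\in\mathfrak{N}$, and it produces an economical interval coloring with at most $t+2$ colors, in the spirit of the bound $w(G\square H)\leq w(G)+w(H)$ of Theorem~\ref{mytheorem8}, whereas the paper's layer-shifting constructions aim at lower bounds for $W$. Two small points to tighten: the claim that the interval condition \emph{forces} the two matching colors at an interior vertex to be $\ell_{u}$ and $\ell_{u}+d_{G}(u)+1$ is an overstatement (given the layer palette $[\ell_{u}+1,\ell_{u}+d_{G}(u)]$, the pair $\{\ell_{u}-1,\ell_{u}\}$ or $\{\ell_{u}+d_{G}(u)+1,\ell_{u}+d_{G}(u)+2\}$ would also complete an interval when available), but since you only use the explicit alternating assignment and then verify the palettes directly, nothing breaks; and the opening reduction ``we may assume $G$ is connected,'' which you need in order to invoke Lemma~\ref{mylemma1}, deserves a sentence -- membership in $\mathfrak{N}$ passes to components, and a disjoint union of interval colorable graphs is interval colorable after renormalizing each component to start at color $1$.
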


It is well-known that $P_{m},C_{2n}\in \mathfrak{N}$ and
$W\left(P_{m}\right)=m-1$, $W\left(C_{2n}\right)=n+1$ for $m\in
\mathbb{N}$ and $n\geq 2$. Later, Giaro and Kubale \cite{b9,b19}
proved a more general result.

\begin{theorem}
\label{mytheorem8} If $G,H\in \mathfrak{N}$, then $G\square H\in
\mathfrak{N}$. Moreover, $w(G\square H)\leq w(G)+w(H)$ and
$W(G\square H)\geq W(G)+W(H)$.
\end{theorem}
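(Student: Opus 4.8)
The plan is to prove the three statements of Theorem~\ref{mytheorem8} by constructing explicit interval colorings of $G\square H$ from given interval colorings of the factors. For the membership $G\square H\in\mathfrak{N}$ and the lower bound $W(G\square H)\geq W(G)+W(H)$, I would start with an interval $W(G)$-coloring $\alpha$ of $G$ and an interval $W(H)$-coloring $\beta$ of $H$. The natural candidate is the coloring $\gamma$ of $G\square H$ defined on the two types of edges by $\gamma((u,v_1)(u,v_2)) = \alpha$-part plus a shift, and $\gamma((u_1,v)(u_2,v)) = \beta$-part plus a shift, where the shifts depend on the ``other'' coordinate. Concretely, for an $H$-edge $(u,v_1)(u,v_2)$ in the copy of $H$ indexed by $u\in V(G)$, I would try a color of the form $\beta(v_1v_2) + f(u)$, and for a $G$-edge $(u_1,v)(u_2,v)$ a color of the form $\alpha(u_1u_2) + g(v)$, choosing $f$ and $g$ so that around each vertex $(u,v)$ the colors $\{\beta(vv')+f(u)\} \cup \{\alpha(uu')+g(v)\}$ form one contiguous block. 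The cleanest choice is $f(u) := \alpha(S) $ evaluated consistently — better, use the endpoints: set $f(u)$ to be the smallest color that $\alpha$ puts at $u$ minus $1$ (so the $H$-edges at $(u,v)$ occupy colors right below $g(v)+1$), and symmetrically $g(v)$ the largest color $\beta$ puts at $v$; then at $(u,v)$ the $G$-edges occupy the top of $S(u,\alpha)+\text{(something)}$ and the $H$-edges occupy the bottom, and the two intervals abut. After verifying contiguity at every vertex and identifying the global minimum ($=1$ after a final shift) and maximum ($=W(G)+W(H)$), Lemma~\ref{mylemma1} finishes the job, simultaneously giving $G\square H\in\mathfrak{N}$ and the lower bound $W(G\square H)\geq W(G)+W(H)$.

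For the upper bound $w(G\square H)\leq w(G)+w(H)$, I would run the same construction but starting from a minimal interval $w(G)$-coloring of $G$ and a minimal interval $w(H)$-coloring of $H$; the construction produces an interval coloring of $G\square H$ using exactly $w(G)+w(H)$ colors (the same count computation as above), hence $w(G\square H)\leq w(G)+w(H)$. Actually a subtle point: the generic ``sum of spreads'' construction as described yields roughly $W(G)+W(H)-1$ colors when fed $W$-colorings and $w(G)+w(H)-1$ when fed $w$-colorings, so I would adjust the indexing (e.g.\ let the two abutting intervals overlap in a single color, or not) to land on exactly the stated bounds; this bookkeeping is where one must be careful about off-by-one errors.

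The main obstacle I anticipate is verifying the contiguity condition at every vertex of $G\square H$ with a single uniform choice of the shift functions $f$ and $g$. At a vertex $(u,v)$, the $H$-edges contribute the set $\{\beta(e) : e\ni v\} + f(u)$, which is an interval of length $d_H(v)$, and the $G$-edges contribute $\{\alpha(e):e\ni u\}+g(v)$, an interval of length $d_G(u)$. These are automatically intervals individually; the issue is that their union must be an interval, i.e.\ the top of one must be exactly one below the bottom of the other (or they must abut with no gap and no clash of colors, which would violate properness). Getting $f(u)$ to equal ``the highest $\alpha$-color at $u$'' and checking that this meshes correctly at the \emph{other} endpoint of each $G$-edge (where $u$ changes) is the delicate combinatorial heart of the argument. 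I would handle it by writing, for each vertex $v\in V(H)$, $S(v,\beta)=[a_v,b_v]$ and for each $u\in V(G)$, $S(u,\alpha)=[c_u,d_u]$, set $f(u):=c_u-1$ and $g(v):=b_v$, and then check that at $(u,v)$ the $H$-edges occupy $[a_v+c_u-1,\;b_v+c_u-1]$ and the $G$-edges occupy $[c_u+b_v,\;d_u+b_v]$, which abut exactly at the junction $b_v+c_u-1 \mid c_u+b_v$; propriety along $H$-edges follows from propriety of $\beta$ (same $f(u)$ shift) and along $G$-edges from propriety of $\alpha$ (same $g(v)$ shift), while the global range runs from $1$ (after subtracting the minimum) up to $d_u+b_v$ maximized, which equals $W(G)+W(H)$ for the $W$-colorings and $w(G)+w(H)$ for the $w$-colorings. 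Once this is checked, Lemma~\ref{mylemma1} applies and all three assertions follow at once.
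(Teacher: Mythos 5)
Your construction is correct, and the bookkeeping worry in your middle paragraph is resolved by your own final choice of shifts: with $f(u)=c_u-1$ and $g(v)=b_v$, at a vertex $(u,v)$ the $H$-edges occupy $[a_v+c_u-1,\,b_v+c_u-1]$ and the $G$-edges occupy $[b_v+c_u,\,b_v+d_u]$, which abut with no gap and no repeated color, the global minimum is already $1$ (no final shift is needed), and the global maximum is exactly $W(G)+W(H)$ (resp.\ $w(G)+w(H)$ when you feed in minimal colorings), since an edge of $G$ colored $W(G)$ by $\alpha$ sits in a copy of $G$ indexed by a vertex $v$ of $H$ with $b_v=W(H)$; Lemma~\ref{mylemma1} then gives that all colors are used, so all three assertions follow. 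Note that the paper does not prove Theorem~\ref{mytheorem8} at all --- it is quoted from Giaro and Kubale --- so there is no proof in the text to match against; your argument is the standard one, and it is the same ``shift the copies'' idea that the paper does spell out, in specialized form, in Theorems~\ref{mytheorem9} and~\ref{mytheorem10}, where the second factor is $P_m$ or $C_{2n}$ and regularity of $G$ allows the per-layer shifts to be inflated to $(i-1)(r+1)$, gaining the extra $(m-1)r$ (resp.\ $nr$) colors beyond the $W(G)+W(H)$ that your uniform shifts $f(u)=c_u-1$, $g(v)=b_v$ produce. Two small points to tidy up in a written version: Lemma~\ref{mylemma1} requires connectedness, so either assume $G$ and $H$ connected or apply the argument componentwise (the paper is equally silent on this in its own constructions); and you should state explicitly that properness between an $H$-edge and a $G$-edge meeting at $(u,v)$ follows because the two abutting intervals are disjoint --- you only mention properness within each type.
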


Let us note that if $G\in \mathfrak{N}$ and $H=P_{m}$ or $H=C_{2n}$,
then, by Theorem \ref{mytheorem8}, we obtain $w(G\square H)\leq
w(G)+2$ and $W(G\square P_{m})\geq W(G)+m-1$, $W(G\square
C_{2n})\geq W(G)+n+1$. Now we improve the lower bound in Theorem
\ref{mytheorem8} for $W\left(G\square P_{m}\right)$ and
$W\left(G\square C_{2n}\right)$ when $G$ is a regular graph and
$G\in \mathfrak{N}$. More precisely, we show that the following two
theorems hold.

\begin{theorem}
\label{mytheorem9} If $G$ is an $r$-regular graph and $G\in
\mathfrak{N}$, then $G\square P_{m}\in \mathfrak{N}$ $(m\in
\mathbb{N})$ and $W\left(G\square P_{m}\right)\geq
W(G)+W\left(P_{m}\right)+(m-1)r$.
\end{theorem}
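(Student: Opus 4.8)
The plan is to construct an interval $\big(W(G)+W(P_m)+(m-1)r\big)$-coloring of $G\square P_m$ explicitly, starting from a "good" interval $W(G)$-coloring of $G$ and then stacking and shifting copies of it along the path $P_m$. Since $P_m\in\mathfrak N$ and $W(P_m)=m-1$, the target number of colors is $W(G)+m-1+(m-1)r = W(G)+m(r+1)-(r+1)$. Denote by $\beta$ a fixed interval $W(G)$-coloring of the $r$-regular graph $G$; for every vertex $u\in V(G)$ the set $S(u,\beta)$ is an interval of length $r$ contained in $[1,W(G)]$. Write $V(P_m)=\{1,\dots,m\}$ with $i\sim i+1$, so $G\square P_m$ consists of $m$ "layers" $G^{(1)},\dots,G^{(m)}$, each a copy of $G$, together with perfect matchings $M_i$ joining $G^{(i)}$ to $G^{(i+1)}$ for $1\le i\le m-1$.

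The colouring I would use is the following. On layer $G^{(i)}$ colour the edge corresponding to $e\in E(G)$ with $\beta(e)+(i-1)(r+1)$; this shifts each successive copy of $\beta$ up by $r+1$. On the matching edge of $M_i$ at vertex $u$ (joining $(u,i)$ to $(u,i+1)$) I would assign a colour equal to $i(r+1)$ plus an offset depending on the position of $S(u,\beta)$ inside $[1,W(G)]$: concretely, if $S(u,\beta)=[a_u,a_u+r-1]$, colour the matching edge at $u$ in $M_i$ by, say, $(i-1)(r+1)+a_u+r$ when $i$ is chosen so that this colour lies just above the block used by $G^{(i)}$ at $u$ and just below the block used by $G^{(i+1)}$ at $u$. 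One then checks that at the vertex $(u,i)$ the incident colours are: the shifted interval from $\beta$ in layer $i$, namely $[a_u+(i-1)(r+1),\,a_u+r-1+(i-1)(r+1)]$, the matching colour to layer $i-1$ (one below this block, for $i\ge2$), and the matching colour to layer $i+1$ (one above, for $i\le m-1$). With the offsets chosen consistently this set is a single interval of integers, and all the blocks together sweep the range $[1,\,W(G)+(m-1)(r+1)]$, so by Lemma~\ref{mylemma1} the colouring is an interval colouring on exactly $W(G)+(m-1)(r+1)=W(G)+W(P_m)+(m-1)r$ colours; since $G\square P_m$ is connected whenever $G$ is, and Theorem~\ref{mytheorem7} already guarantees $G\square P_m\in\mathfrak N$, this gives the claimed bound. (If $G$ is disconnected one applies the argument componentwise, or one notes $r$-regular interval-colourable forces each component to satisfy $\chi'=\Delta$ by Theorem~\ref{mytheorem1} and proceeds the same way.)

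The main obstacle is getting the matching colours to interleave correctly with the shifted $\beta$-blocks at \emph{every} vertex simultaneously: the colour of the matching edge in $M_i$ at $u$ must be exactly one less than the bottom of the block of $G^{(i+1)}$ at $u$ \emph{and} exactly one more than the top of the block of $G^{(i)}$ at $u$, and the same edge is shared by $(u,i)$ and $(u,i+1)$, so the two requirements coming from its two endpoints must be compatible. Because the shift between consecutive layers is $r+1$ and $|S(u,\beta)|=r$, there is exactly a one-colour "gap" between the top of layer $i$'s block and the bottom of layer $i{+}1$'s block at each vertex $u$, and that single gap colour is the natural choice for the matching edge; the verification is that this choice makes $S((u,i),\alpha)$ an interval for all $u$ and all $i$, and that adjacent matching edges (which never meet, being a matching) impose no further constraint. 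I would also need to check the boundary layers $i=1$ and $i=m$ separately (they have only one incident matching edge per vertex), and confirm that the extreme colours $1$ and $W(G)+(m-1)(r+1)$ are actually attained so that Lemma~\ref{mylemma1} applies.
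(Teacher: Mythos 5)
Your construction is exactly the paper's: shift the $i$-th copy of an interval $W(G)$-coloring of $G$ by $(i-1)(r+1)$, color each matching edge of $M_i$ at $u$ with the unique gap color $\max S(u,\beta)+(i-1)(r+1)+1$ sitting between the consecutive blocks at $u$ (possible precisely because regularity makes every block have length $r$), verify intervality at interior and boundary layers, and invoke Lemma~\ref{mylemma1} after noting colors $1$ and $W(G)+(m-1)(r+1)$ are attained. This is correct and essentially identical to the paper's proof.
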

\begin{proof} For the proof, we construct an edge-coloring of the graph $G\square P_{m}$ that satisfies the
specified conditions.

Let $V(G)=\left\{v_{1},\ldots,v_{n}\right\}$ and $V\left(G\square
P_{m}\right)=\bigcup_{i=1}^{m}V^{i}$, where
$V^{i}=\left\{v_{j}^{(i)}\colon\,1\leq j\leq n\right\}$. Also, let
$E\left(G\square P_{m}\right)=\bigcup_{i=1}^{m}E^{i}\cup
\bigcup_{j=1}^{n}E_{j}$, where
\begin{center}
$E^{i}=\left\{v_{j}^{(i)}v_{k}^{(i)}\colon\,v_{j}v_{k}\in E(G)
\right\}$ and $E_{j}=\left\{v_{j}^{(i)}v_{j}^{(i+1)}\colon\,1\leq
i\leq m-1\right\}$.
\end{center}

For $1\leq i\leq m$, define a subgraph $G^{i}$ of the graph
$G\square P_{m}$ as follows: $G^{i}=\left(V^{i},E^{i}\right)$.
Clearly, $G^{i}$ is isomorphic to $G$ for $1\leq i\leq m$. Since
$G\in \mathfrak{N}$, there exists an interval $W(G)$-coloring
$\alpha$ of $G$. Now we define an edge-coloring $\beta$ of the
subgraphs $G^{1},\ldots,G^{m}$.

For $1\leq i\leq m$ and for every edge $v_{j}^{(i)}v_{k}^{(i)}\in
E(G^{i})$, let
\begin{center}
$\beta \left(v_{j}^{(i)}v_{k}^{(i)}\right)=\alpha
(v_{j}v_{k})+(i-1)(r+1)$.
\end{center}

It is easy to see that the color of each edge of the subgraph
$G^{i}$ is obtained by shifting the color of the associated edge of
$G$ by $(i-1)(r+1)$, thus the set $S\left(v_{j}^{(i)},\beta\right)$
is an interval for each vertex $v_{j}^{(i)}\in V(G^{i})$, where
$1\leq i\leq m$, $1\leq j\leq n$. Now we define an edge-coloring
$\gamma$ of the graph $G\square P_{m}$.

For every $e\in E\left(G\square P_{m}\right)$, let
\begin{center}
$\gamma(e)= \left\{
\begin{tabular}{ll}
$\beta(e)$, & if $e\in E(G^{i}),$\\
$\max S\left(v_{j}^{(i)},\beta\right)+1$, & if $e=v_{j}^{(i)}v_{j}^{(i+1)}\in E_{j}$,\\
\end{tabular}%
\right.$
\end{center}
where $1\leq i\leq m,1\leq j\leq n$.

Let us prove that $\gamma$ is an interval
$\left(W(G)+W\left(P_{m}\right)+(m-1)r\right)$-coloring of the graph
$G\square P_{m}$ for $m\in \mathbb{N}$.

First we show that the set $S\left(v_{j}^{(i)},\gamma\right)$ is an
interval for each vertex $v_{j}^{(i)}\in V\left(G\square
P_{m}\right)$, where $1\leq i\leq m,1\leq j\leq n$.

Case 1: $i=1$, $1\leq j\leq n$.

By the definition of $\gamma$ and taking into account that $\max
S\left(v_{j},\alpha\right)-\min S\left(v_{j},\alpha\right)=r-1$ for
$1\leq j\leq n$, we have

\begin{eqnarray*}
S\left(v_{j}^{(1)},\gamma\right) &=&\left\{\min
S\left(v_{j},\alpha\right),\ldots,\max
S\left(v_{j},\alpha\right)\right\}\cup \left\{\max S\left(v_{j},\alpha\right)+1\right\}\\
&=& [\min S\left(v_{j},\alpha\right),\max
S\left(v_{j},\alpha\right)+1].
\end{eqnarray*}

Case 2: $2\leq i\leq m-1$, $1\leq j\leq n$.

By the definition of $\gamma$ and taking into account that $\max
S\left(v_{j},\alpha\right)-\min S\left(v_{j},\alpha\right)=r-1$ for
$1\leq j\leq n$, we have

\begin{eqnarray*}
S\left(v_{j}^{(i)},\gamma\right) &=&\left\{\min
S\left(v_{j},\alpha\right)+(i-1)(r+1),\ldots,\max
S\left(v_{j},\alpha\right)+(i-1)(r+1)\right\}\\
&&\cup \left\{\max S\left(v_{j},\alpha\right)+(i-2)(r+1)+1\right\}\cup \left\{\max S\left(v_{j},\alpha\right)+(i-1)(r+1)+1\right\}\\
&=&\left\{\min S\left(v_{j},\alpha\right)+(i-1)(r+1),\ldots,\max
S\left(v_{j},\alpha\right)+(i-1)(r+1)+1\right\}\\
&&\cup \left\{\max S\left(v_{j},\alpha\right)+(i-2)(r+1)+1\right\}\\
&=&\left\{\min S\left(v_{j},\alpha\right)+(i-1)(r+1)-1,\ldots,\max S\left(v_{j},\alpha\right)+(i-1)(r+1)+1\right\}\\
&=& [\min S\left(v_{j},\alpha\right)+(i-1)(r+1)-1,\max
S\left(v_{j},\alpha\right)+(i-1)(r+1)+1].
\end{eqnarray*}

Case 3: $i=m$, $1\leq j\leq n$.

By the definition of $\gamma$ and taking into account that $\max
S\left(v_{j},\alpha\right)-\min S\left(v_{j},\alpha\right)=r-1$ for
$1\leq j\leq n$, we have

\begin{eqnarray*}
S\left(v_{j}^{(m)},\gamma\right) &=&\left\{\min
S\left(v_{j},\alpha\right)+(m-1)(r+1),\ldots,\max
S\left(v_{j},\alpha\right)+(m-1)(r+1)\right\}\\
&&\cup \left\{\max S\left(v_{j},\alpha\right)+(m-2)(r+1)+1\right\}\\
&=& \left\{\min S\left(v_{j},\alpha\right)+(m-1)(r+1)-1,\ldots,\max
S\left(v_{j},\alpha\right)+(m-1)(r+1)\right\}\\
&=& [\min S\left(v_{j},\alpha\right)+(m-1)(r+1)-1,\max
S\left(v_{j},\alpha\right)+(m-1)(r+1)].
\end{eqnarray*}

Next we prove that in the coloring $\gamma$ all colors are used.
Clearly, there exists an edge $v_{j_{0}}^{(1)}v_{k_{0}}^{(1)}\in
E(G^{1})$ such that
$\gamma\left(v_{j_{0}}^{(1)}v_{k_{0}}^{(1)}\right)=1$, since in the
coloring $\alpha$ there exists an edge $v_{j_{0}}v_{k_{0}}$ with
$\alpha\left(v_{j_{0}}v_{k_{0}}\right)=1$ and
$\gamma\left(v_{j_{0}}^{(1)}v_{k_{0}}^{(1)}\right)=
\beta\left(v_{j_{0}}^{(1)}v_{k_{0}}^{(1)}\right)=
\alpha\left(v_{j_{0}}v_{k_{0}}\right)$. Similarly, there exists an
edge $v_{j_{1}}^{(m)}v_{k_{1}}^{(m)}\in E(G^{m})$ such that
$\gamma\left(v_{j_{1}}^{(m)}v_{k_{1}}^{(m)}\right)=W(G)+(m-1)(r+1)=W(G)+W\left(P_{m}\right)+(m-1)r$,
since in the coloring $\alpha$ there exists an edge
$v_{j_{1}}v_{k_{1}}$ with
$\alpha\left(v_{j_{1}}v_{k_{1}}\right)=W(G)$ and
$\gamma\left(v_{j_{1}}^{(m)}v_{k_{1}}^{(m)}\right)=
\beta\left(v_{j_{1}}^{(m)}v_{k_{1}}^{(m)}\right)=
\alpha\left(v_{j_{1}}v_{k_{1}}\right)+(m-1)(r+1)$. Now, by Lemma
\ref{mylemma1}, we have that for each $t\in
[W(G)+W\left(P_{m}\right)+(m-1)r]$, there is an edge $e\in
E\left(G\square P_{m}\right)$ with $\gamma(e)=t$.

This shows that $\gamma$ is an interval
$\left(W(G)+W\left(P_{m}\right)+(m-1)r\right)$-coloring of the graph
$G\square P_{m}$ for $m\in \mathbb{N}$. Thus, $G\square P_{m}\in
\mathfrak{N}$ and $W\left(G\square P_{m}\right)\geq
W(G)+W\left(P_{m}\right)+(m-1)r$. ~$\square$
\end{proof}

\begin{corollary}
\label{mycorollary1} If $G$ is an $r$-regular graph and $G\in
\mathfrak{N}$, then $G\square Q_{n}\in \mathfrak{N}$
$(n\in\mathbb{N})$ and
\begin{center}
$W\left(G\square Q_{n}\right)\geq W(G)+\frac{n(n+2r+1)}{2}$.
\end{center}
\end{corollary}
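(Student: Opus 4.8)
The plan is to prove Corollary~\ref{mycorollary1} by induction on $n$, using Theorem~\ref{mytheorem9} as the inductive step, together with the elementary fact that $Q_{n}=Q_{n-1}\square P_{2}$ and $Q_{0}=K_{1}$ (or $Q_{1}=P_{2}$ as the base case). The key observation is that $Q_{n}$ is $n$-regular and that the Cartesian product of an $r$-regular graph with an $s$-regular graph is $(r+s)$-regular; hence $G\square Q_{n-1}$ is $(r+n-1)$-regular whenever $G$ is $r$-regular, which is exactly the hypothesis needed to apply Theorem~\ref{mytheorem9} with $m=2$ and the ground graph $G\square Q_{n-1}$.

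First I would set up the base case $n=1$: since $Q_{1}=P_{2}$ and $W(P_{2})=1$, Theorem~\ref{mytheorem9} (with $m=2$) gives $W(G\square P_{2})\geq W(G)+W(P_{2})+r = W(G)+1+r$, and one checks $\frac{1\cdot(1+2r+1)}{2}=\frac{2r+2}{2}=r+1$, so the claimed inequality holds. Then, for the inductive step, assume $G\square Q_{n-1}\in\mathfrak{N}$ with $W(G\square Q_{n-1})\geq W(G)+\frac{(n-1)(n-1+2r+1)}{2}=W(G)+\frac{(n-1)(n+2r)}{2}$. Since $G\square Q_{n-1}$ is $(r+n-1)$-regular and lies in $\mathfrak{N}$, Theorem~\ref{mytheorem9} applies to $(G\square Q_{n-1})\square P_{2}=G\square Q_{n}$, giving
\begin{center}
$W(G\square Q_{n})\geq W(G\square Q_{n-1})+W(P_{2})+(2-1)(r+n-1)=W(G\square Q_{n-1})+(r+n)$.
\end{center}
Combining this with the inductive hypothesis yields $W(G\square Q_{n})\geq W(G)+\frac{(n-1)(n+2r)}{2}+(r+n)$, and it remains to verify the algebraic identity $\frac{(n-1)(n+2r)}{2}+(r+n)=\frac{n(n+2r+1)}{2}$; this is routine since $\frac{(n-1)(n+2r)+2(r+n)}{2}=\frac{n^{2}+2rn-n-2r+2r+2n}{2}=\frac{n^{2}+2rn+n}{2}=\frac{n(n+2r+1)}{2}$.

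The membership $G\square Q_{n}\in\mathfrak{N}$ follows automatically along the induction, since each application of Theorem~\ref{mytheorem9} also asserts that the product is interval colorable. I do not anticipate any real obstacle here: the only things to be careful about are (i) correctly tracking the regularity of the intermediate graphs so that Theorem~\ref{mytheorem9} is legitimately applicable at each step, and (ii) the bookkeeping in the final algebraic identity. An alternative, slightly slicker presentation would be to apply Theorem~\ref{mytheorem9} $n$ times in succession — building $Q_{n}$ as $G\square P_{2}\square\cdots\square P_{2}$ and accumulating the bounds $\sum_{i=0}^{n-1}\bigl(1+(r+i)\bigr)=n+rn+\frac{n(n-1)}{2}=\frac{n(n+2r+1)}{2}$ — but the induction is cleaner to write and makes the regularity hypothesis transparent at each stage.
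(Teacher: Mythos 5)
Your induction is correct and is exactly the argument the paper intends: the corollary is stated as a consequence of Theorem~\ref{mytheorem9}, obtained by applying it repeatedly with $m=2$ to the $(r+i)$-regular graphs $G\square Q_{i}$, which is what your inductive step does, and your algebra $\frac{(n-1)(n+2r)}{2}+(r+n)=\frac{n(n+2r+1)}{2}$ checks out. No gaps.
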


\begin{theorem}
\label{mytheorem10} If $G$ is an $r$-regular graph and $G\in
\mathfrak{N}$, then $G\square C_{2n}\in \mathfrak{N}$ $(n\geq 2)$
and $W\left(G\square C_{2n}\right)\geq
W(G)+W\left(C_{2n}\right)+nr$.
\end{theorem}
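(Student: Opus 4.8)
The plan is to mimic the construction used in the proof of Theorem~\ref{mytheorem9}, replacing the path $P_m$ by the even cycle $C_{2n}$ and exploiting the extra edge of $C_{2n}$ (compared to $P_{2n}$) to ``close up'' the coloring while gaining an extra unit of width. Concretely, write $V\left(G\square C_{2n}\right)=\bigcup_{i=1}^{2n}V^{i}$ with $V^{i}=\left\{v_{j}^{(i)}\colon 1\leq j\leq n\right\}$, and let $G^{i}=\left(V^{i},E^{i}\right)$ be the copy of $G$ on layer $i$, together with the ``vertical'' edge sets $E_{j}=\left\{v_{j}^{(i)}v_{j}^{(i+1)}\colon 1\leq i\leq 2n\right\}$ (indices taken modulo $2n$, so $v_{j}^{(2n+1)}=v_{j}^{(1)}$). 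Fix an interval $W(G)$-coloring $\alpha$ of $G$, and define $\beta$ on $G^{i}$ by $\beta\left(v_{j}^{(i)}v_{k}^{(i)}\right)=\alpha(v_{j}v_{k})+(i-1)(r+1)$ for $1\leq i\leq 2n$, exactly as before; each $S\left(v_{j}^{(i)},\beta\right)$ is then an interval of length $r$ lying in $\left[1+(i-1)(r+1),\,W(G)+(i-1)(r+1)\right]$.

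Next I would color the vertical edges. For $1\leq i\leq 2n-1$, set $\gamma\left(v_{j}^{(i)}v_{j}^{(i+1)}\right)=\max S\left(v_{j}^{(i)},\beta\right)+1$, just as in Theorem~\ref{mytheorem9}; this is the color that sits immediately above the block of layer $i$ and (being $=\min S\left(v_{j}^{(i+1)},\beta\right)-2$ in the relevant range, by the arithmetic already carried out in Cases 1--3 of the previous proof) extends the interval of layer $i+1$ downward by one. The single remaining vertical edge is the ``wrap-around'' edge $v_{j}^{(2n)}v_{j}^{(1)}\in E_{j}$; I would color it with a value chosen so that it fits above the top layer and below the bottom layer simultaneously. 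The total palette used on the copies $G^{1},\dots,G^{2n}$ together with the first $2n-1$ vertical edges ranges over $\left[1,\,W(G)+(2n-1)(r+1)+1\right]$, and since $W(G)+(2n-1)(r+1)+1 = W(G)+2n(r+1)-r = W(G)+(n+1)+nr = W(G)+W(C_{2n})+nr$ (using $W(C_{2n})=n+1$), the desired bound will follow once I check the wrap-around edge is colorable without creating a new color or a gap.

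The main obstacle is exactly this wrap-around edge: at layer $1$ the set $S\left(v_{j}^{(1)},\gamma\right)$ currently equals $\left[\min S(v_{j},\alpha),\,\max S(v_{j},\alpha)+1\right]$ after adding the edge to layer $2$, so the wrap edge must take a color that is either $\min S(v_{j},\alpha)-1$ (to extend layer $1$ downward, but then it also has to be consistent at layer $2n$) or sit at the top, around $W(G)+(2n-1)(r+1)+1$, extending layer $2n$ upward; but a single color cannot be simultaneously near $1$ and near the top. The resolution is to \emph{redistribute}: instead of always shifting layer $i$ by $(i-1)(r+1)$, I would shift layers $1,\dots,n$ upward as before but shift layers $n+1,\dots,2n$ by a \emph{decreasing} amount — i.e.\ fold the cycle so that it ascends on one semicycle and descends on the other, meeting at the top between layers $n$ and $n+1$ and at the bottom between layers $2n$ and $1$. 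With this folded assignment, the bottom meeting uses colors near $1$ and the top meeting uses colors near the maximum, each vertical ``seam'' edge again taking a color one above the lower of its two incident blocks; one then verifies in a small number of cases (bottom layers, a generic ascending layer, the two apex layers $n$ and $n+1$, a generic descending layer) that every $S\left(v_{j}^{(i)},\gamma\right)$ is an interval, and the highest color attained is $W(G)+n(r+1)+\,(\text{a constant from the seam})$, which after bookkeeping equals $W(G)+W(C_{2n})+nr$. Finally, as in the previous proof, Lemma~\ref{mylemma1} and the fact that colors $1$ and the maximum both actually occur (on a suitable edge of $G^{1}$ and of the apex layer, respectively) give that $\gamma$ is an interval coloring with the claimed number of colors, whence $G\square C_{2n}\in\mathfrak{N}$ and $W\left(G\square C_{2n}\right)\geq W(G)+W\left(C_{2n}\right)+nr$.
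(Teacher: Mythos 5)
Your overall strategy --- fold the cycle so that the layer shifts ascend on one semicycle and descend on the other, coloring the ``vertical'' edges at the seams --- is exactly the idea behind the paper's construction, so the approach is right in spirit. But as written the proposal has a genuine gap: the quantitative choices that make the fold reach $W(G)+W(C_{2n})+nr=W(G)+n(r+1)+1$ are missing, and the seam rule you state fails at the two turning vertices. First, a bookkeeping remark: your identity $W(G)+(2n-1)(r+1)+1=W(G)+(n+1)+nr$ is false for $n\geq 2$ (the left side exceeds the right by $(n-1)(r+1)$), so the claim that the naive shift-by-$(i-1)(r+1)$ scheme ``will give the desired bound once the wrap-around edge is handled'' is not just blocked by the wrap edge --- the target arithmetic itself is off. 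More importantly, in the folded scheme you keep layers $1,\ldots,n$ shifted by $(i-1)(r+1)$ ``as before'' and let the fold meet ``between layers $n$ and $n+1$.'' Then the largest block shift is $(n-1)(r+1)$ (or $n(r+1)$ if the apex is a single layer), and the rule ``each seam edge takes a color one above the lower of its two incident blocks'' yields a maximum color of at most $W(G)+(n-1)(r+1)+1$, which falls short of $W(G)+n(r+1)+1$ by $r+1$; moreover, at a turning vertex the two incident vertical edges would receive the \emph{same} color (e.g.\ in a symmetric fold layers $2n$ and $1$ get equal shifts, so the wrap edge and the edge to layer $2$ are both colored $\max S(v_j,\alpha)+1$), i.e.\ the coloring is not even proper there.

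What is missing is the asymmetric adjustment that the paper uses to gain the extra unit and to avoid the clash at the fold points: the ascending layers $2,\ldots,n+1$ are shifted by $(i-1)(r+1)+1$ (note the extra $+1$), the apex is the single layer $n+1$ with shift $n(r+1)+1$, the descending layers $n+2,\ldots,2n$ are shifted by $(2n+1-i)(r+1)$, and at the two turning vertices the two vertical edges take two \emph{consecutive} colors on the \emph{same} side of the block --- at layer $1$ the colors $\max S(v_j,\alpha)+1$ and $\max S(v_j,\alpha)+2$ (the wrap edge and the edge to layer $2$), and at layer $n+1$ the two colors immediately below its block --- while away from the turning points the seam edge sits in the unique gap between consecutive blocks. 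With these choices every $S\bigl(v_j^{(i)},\gamma\bigr)$ is an interval and the top color is $W(G)+n(r+1)+1=W(G)+W(C_{2n})+nr$, after which Lemma~\ref{mylemma1} finishes the argument as you intend. Without this adjustment your sketch either produces a conflict at a fold vertex or proves only the weaker bound $W\left(G\square C_{2n}\right)\geq W(G)+W(C_{2n})+nr-(r+1)$, so the case analysis you defer to is not a routine verification but the place where the real work (and the extra $+1$) has to be put in.
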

\begin{proof} For the proof, we construct an edge-coloring of the
graph $G\square C_{2n}$ that satisfies the specified conditions.

Let $V(G)=\left\{v_{1},\ldots,v_{p}\right\}$ and $V\left(G\square
C_{2n}\right)=\bigcup_{i=1}^{2n}V^{i}$, where
$V^{i}=\left\{v_{j}^{(i)}\colon\,1\leq j\leq p\right\}$. Also, let
$E\left(G\square C_{2n}\right)=\bigcup_{i=1}^{2n}E^{i}\cup
\bigcup_{j=1}^{p}E_{j}$, where
\begin{center}
$E^{i}=\left\{v_{j}^{(i)}v_{k}^{(i)}\colon\,v_{j}v_{k}\in E(G)
\right\}$ and $E_{j}=\left\{v_{j}^{(i)}v_{j}^{(i+1)}\colon\,1\leq
i\leq 2n-1\right\}\cup \left\{v_{j}^{(1)}v_{j}^{(2n)}\right\}$.
\end{center}

For $1\leq i\leq 2n$, define a subgraph $G^{i}$ of the graph
$G\square C_{2n}$ as follows: $G^{i}=\left(V^{i},E^{i}\right)$.
Clearly, $G^{i}$ is isomorphic to $G$ for $1\leq i\leq 2n$. Since
$G\in \mathfrak{N}$, there exists an interval $W(G)$-coloring
$\alpha$ of $G$. Now we define an edge-coloring $\beta$ of the
subgraphs $G^{1},\ldots,G^{2n}$.

For $1\leq i\leq 2n$ and for every edge $v_{j}^{(i)}v_{k}^{(i)}\in
E(G^{i})$, let
\begin{center}
$\beta\left(v_{j}^{(i)}v_{k}^{(i)}\right)= \left\{
\begin{tabular}{ll}
$\alpha\left(v_{j}v_{k}\right)$, & if $i=1$,\\
$\alpha\left(v_{j}v_{k}\right)+(i-1)(r+1)+1$, & if $2\leq i\leq n+1$,\\
$\alpha\left(v_{j}v_{k}\right)+(2n+1-i)(r+1)$, & if $n+2\leq i\leq 2n$.\\
\end{tabular}%
\right.$
\end{center}

It is easy to see that the color of each edge of the subgraph
$G^{i}$ is obtained by shifting the color of the associated edge of
$G$ by $(i-1)(r+1)+1$ for $2\leq i\leq n+1$, and by $(2n-i+1)(r+1)$
for $n+2\leq i\leq 2n$, thus the set
$S\left(v_{j}^{(i)},\beta\right)$ is an interval for each vertex
$v_{j}^{(i)}\in V(G^{i})$, where $1\leq i\leq 2n$, $1\leq j\leq p$.
Now we define an edge-coloring $\gamma$ of the graph $G\square
C_{2n}$.

For every $e\in E\left(G\square C_{2n}\right)$, let
\begin{center}
$\gamma(e)= \left\{
\begin{tabular}{ll}
$\beta(e)$, & if $e\in E(G^{i}),$\\
$\max S\left(v_{j}^{(1)},\beta\right)+1$, & if $e=v_{j}^{(1)}v_{j}^{(2n)}\in E_{j}$,\\
$\max S\left(v_{j}^{(1)},\beta\right)+2$, & if $e=v_{j}^{(1)}v_{j}^{(2)}\in E_{j}$,\\
$\max S\left(v_{j}^{(i)},\beta\right)+1$, & if $e=v_{j}^{(i)}v_{j}^{(i+1)}\in E_{j},2\leq i\leq n$,\\
$\max S\left(v_{j}^{(i)},\beta\right)+1$, & if $e=v_{j}^{(i-1)}v_{j}^{(i)}\in E_{j},n+2\leq i\leq 2n$,\\
\end{tabular}%
\right.$
\end{center}
where $1\leq i\leq 2n,1\leq j\leq p$.

Let us prove that $\gamma$ is an interval
$\left(W(G)+W\left(C_{2n}\right)+nr\right)$-coloring of the graph
$G\square C_{2n}$ for $n\geq 2$.

First we show that the set $S\left(v_{j}^{(i)},\gamma\right)$ is an
interval for each vertex $v_{j}^{(i)}\in V\left(G\square
C_{2n}\right)$, where $1\leq i\leq 2n,1\leq j\leq p$.

Case 1: $i=1$, $1\leq j\leq p$.

By the definition of $\gamma$ and taking into account that $\max
S\left(v_{j},\alpha\right)-\min S\left(v_{j},\alpha\right)=r-1$ for
$1\leq j\leq p$, we have

\begin{eqnarray*}
S\left(v_{j}^{(1)},\gamma\right) &=&\left\{\min
S\left(v_{j},\alpha\right),\ldots,\max
S\left(v_{j},\alpha\right)\right\}\cup \left\{\max S\left(v_{j},\alpha\right)+2\right\}\cup \left\{\max S\left(v_{j},\alpha\right)+1\right\}\\
&=& [\min S\left(v_{j},\alpha\right),\max
S\left(v_{j},\alpha\right)+2].
\end{eqnarray*}

Case 2: $2\leq i\leq n$, $1\leq j\leq p$.

By the definition of $\gamma$ and taking into account that $\max
S\left(v_{j},\alpha\right)-\min S\left(v_{j},\alpha\right)=r-1$ for
$1\leq j\leq p$, we have

\begin{eqnarray*}
S\left(v_{j}^{(i)},\gamma\right) &=&\left\{\min
S\left(v_{j},\alpha\right)+(i-1)(r+1)+1,\ldots,\max
S\left(v_{j},\alpha\right)+(i-1)(r+1)+1\right\}\\
&&\cup \left\{\max S\left(v_{j},\alpha\right)+(i-2)(r+1)+2\right\}\cup \left\{\max S\left(v_{j},\alpha\right)+(i-1)(r+1)+2\right\}\\
&=& [\min S\left(v_{j},\alpha\right)+(i-1)(r+1),\max
S\left(v_{j},\alpha\right)+(i-1)(r+1)+2].
\end{eqnarray*}

Case 3: $i=n+1$, $1\leq j\leq p$.

By the definition of $\gamma$ and taking into account that $\max
S\left(v_{j},\alpha\right)-\min S\left(v_{j},\alpha\right)=r-1$ for
$1\leq j\leq p$, we have

\begin{eqnarray*}
S\left(v_{j}^{(n+1)},\gamma\right) &=&\left\{\min
S\left(v_{j},\alpha\right)+n(r+1)+1,\ldots,\max
S\left(v_{j},\alpha\right)+n(r+1)+1\right\}\\
&&\cup \left\{\max S\left(v_{j},\alpha\right)+(n-1)(r+1)+2\right\}\cup \left\{\max S\left(v_{j},\alpha\right)+(n-1)(r+1)+1\right\}\\
&=& [\min S\left(v_{j},\alpha\right)+n(r+1)-1,\max
S\left(v_{j},\alpha\right)+n(r+1)+1].
\end{eqnarray*}

Case 4: $n+2\leq i\leq 2n$, $1\leq j\leq p$.

By the definition of $\gamma$ and taking into account that $\max
S\left(v_{j},\alpha\right)-\min S\left(v_{j},\alpha\right)=r-1$ for
$1\leq j\leq p$, we have

\begin{eqnarray*}
S\left(v_{j}^{(i)},\gamma\right) &=&\left\{\min
S\left(v_{j},\alpha\right)+(2n+1-i)(r+1),\ldots,\max
S\left(v_{j},\alpha\right)+(2n+1-i)(r+1)\right\}\\
&&\cup \left\{\max S\left(v_{j},\alpha\right)+(2n+1-i)(r+1)+1\right\}\cup \left\{\max S\left(v_{j},\alpha\right)+(2n-i)(r+1)+1\right\}\\
&=& [\min S\left(v_{j},\alpha\right)+(2n-i+1)(r+1)-1,\max
S\left(v_{j},\alpha\right)+(2n-i+1)(r+1)+1].
\end{eqnarray*}

Next we prove that in the coloring $\gamma$ all colors are used.
Clearly, there exists an edge $v_{j_{0}}^{(1)}v_{k_{0}}^{(1)}\in
E(G^{1})$ such that
$\gamma\left(v_{j_{0}}^{(1)}v_{k_{0}}^{(1)}\right)=1$, since in the
coloring $\alpha$ there exists an edge $v_{j_{0}}v_{k_{0}}$ with
$\alpha\left(v_{j_{0}}v_{k_{0}}\right)=1$ and
$\gamma\left(v_{j_{0}}^{(1)}v_{k_{0}}^{(1)}\right)=
\beta\left(v_{j_{0}}^{(1)}v_{k_{0}}^{(1)}\right)=
\alpha\left(v_{j_{0}}v_{k_{0}}\right)$. Similarly, there exists an
edge $v_{j_{1}}^{(n+1)}v_{k_{1}}^{(n+1)}\in E(G^{n+1})$ such that
$\gamma\left(v_{j_{1}}^{(n+1)}v_{k_{1}}^{(n+1)}\right)=W(G)+n(r+1)+1=W(G)+W\left(C_{2n}\right)+nr$,
since in the coloring $\alpha$ there exists an edge
$v_{j_{1}}v_{k_{1}}$ with
$\alpha\left(v_{j_{1}}v_{k_{1}}\right)=W(G)$ and
$\gamma\left(v_{j_{1}}^{(n+1)}v_{k_{1}}^{(n+1)}\right)=
\beta\left(v_{j_{1}}^{(n+1)}v_{k_{1}}^{(n+1)}\right)=
\alpha\left(v_{j_{1}}v_{k_{1}}\right)+n(r+1)+1$. Now, by Lemma
\ref{mylemma1}, we have that for each $t\in
[W(G)+W\left(C_{2n}\right)+nr]$, there is an edge $e\in
E\left(G\square C_{2n}\right)$ with $\gamma(e)=t$.

This shows that $\gamma$ is an interval
$\left(W(G)+W\left(C_{2n}\right)+nr\right)$-coloring of the graph
$G\square C_{2n}$ for $n\geq 2$. Thus, $G\square C_{2n}\in
\mathfrak{N}$ and $W\left(G\square C_{2n}\right)\geq
W(G)+W\left(C_{2n}\right)+nr$. ~$\square$
\end{proof}

From Theorems \ref{mytheorem5} and \ref{mytheorem10}, we have:

\begin{corollary}
\label{mycorollary2} If $n=p2^{q}$, where $p$ is odd and $q$ is
nonnegative, then
\begin{center}
$W\left(K_{2n}\square C_{2n}\right)\geq 2n^{2}+4n-1-p-q$.
\end{center}
\end{corollary}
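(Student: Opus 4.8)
The plan is to simply combine Theorems~\ref{mytheorem5} and \ref{mytheorem10} with $G=K_{2n}$, so essentially no new argument is needed beyond bookkeeping. First I would record the basic facts about $K_{2n}$: it is an $r$-regular graph with $r=2n-1$, and since $\chi^{\prime}(K_{2n})=2n-1=\Delta(K_{2n})$ (complete graphs on an even number of vertices are Class~1), Theorem~\ref{mytheorem1}(1) gives $K_{2n}\in\mathfrak{N}$. I would also recall the standard value $W(C_{2n})=n+1$ for $n\geq 2$, which is stated in the text just after Theorem~\ref{mytheorem7}.

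Next I would apply Theorem~\ref{mytheorem10} to $G=K_{2n}$ (legitimate since $n\geq 2$ is assumed, as in Theorem~\ref{mytheorem10}), obtaining
\[
W\left(K_{2n}\square C_{2n}\right)\geq W(K_{2n})+W\left(C_{2n}\right)+n(2n-1)=W(K_{2n})+(n+1)+2n^{2}-n=W(K_{2n})+2n^{2}+1 .
\]
Then I would substitute the lower bound from Theorem~\ref{mytheorem5}, namely $W(K_{2n})\geq 4n-2-p-q$ when $n=p2^{q}$ with $p$ odd and $q\geq 0$, to get
\[
W\left(K_{2n}\square C_{2n}\right)\geq (4n-2-p-q)+2n^{2}+1=2n^{2}+4n-1-p-q ,
\]
which is exactly the claimed inequality.

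The only step that requires any care is the verification $K_{2n}\in\mathfrak{N}$ and the identification $r=2n-1$, $W(C_{2n})=n+1$; once these are in place the conclusion is a one-line computation, so I do not anticipate any genuine obstacle here. (If one wished to state the corollary for $n=1$ as well, one would separately observe that $C_2$ is not a simple cycle in this setting, which is why the hypothesis $n\geq 2$ is inherited from Theorem~\ref{mytheorem10}.)
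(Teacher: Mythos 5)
Your proposal is correct and is exactly the paper's argument: the corollary is stated as an immediate consequence of Theorems~\ref{mytheorem5} and \ref{mytheorem10} applied to $G=K_{2n}$ with $r=2n-1$ and $W(C_{2n})=n+1$, and your arithmetic $W(K_{2n})+n(2n-1)+(n+1)\geq (4n-2-p-q)+2n^{2}+1=2n^{2}+4n-1-p-q$ matches it.
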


Note that the lower bound in Corollary \ref{mycorollary2} is close
to the upper bound for $W\left(K_{2n}\square C_{2n}\right)$, since
$\Delta\left(K_{2n}\square C_{2n}\right)=2n+1$ and
$\mathrm{diam}\left(K_{2n}\square C_{2n}\right)=n+1$, by Theorem
\ref{mytheorem2}, we have $W\left(K_{2n}\square C_{2n}\right)\leq
2n^{2}+4n+1$.
\bigskip

\section{Grids, cylinders and tori}\

Interval edge-colorings of grids, cylinders and tori were first
considered by Giaro and Kubale in \cite{b7}, where they proved the
following:

\begin{theorem}\label{mytheorem11} If $G=G(n_{1},n_{2},\ldots,n_{k})$
or $G=C(m,2n)$, $m\in \mathbb{N}, n\geq 2$, or $G=T(2m,2n)$,
$m,n\geq 2$, then $G\in \mathfrak{N}$ and $w(G)=\Delta(G)$.
\end{theorem}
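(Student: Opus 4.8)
The plan is to establish two facts for each of the three families: membership in $\mathfrak{N}$, and the sharp lower bound $w(G)\geq \Delta(G)$ together with a matching interval $\Delta(G)$-coloring. The inequality $w(G)\geq\Delta(G)$ is immediate, since any interval coloring of a vertex of degree $\Delta$ forces at least $\Delta$ distinct consecutive colors on its incident edges, so it suffices to exhibit an interval $\Delta(G)$-coloring in each case; this simultaneously proves $G\in\mathfrak{N}$ and $w(G)=\Delta(G)$.

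For the grids, I would proceed by induction on the number of factors $k$. The base case $k=1$ is the path $P_{n_1}$, which has the proper interval $(n_1-1)$-coloring using colors $1,2,1,2,\dots$ shifted appropriately — more precisely $P_n$ admits an interval coloring with exactly $\Delta=2$ colors (for $n\geq 3$) by the coloring $1,2,1,2,\ldots$, wait one must be careful: the standard fact is $w(P_n)=\Delta(P_n)$, which is $1$ for $n=2$ and $2$ for $n\geq 3$, and this is classical. For the inductive step, write $G(n_1,\ldots,n_k)=G(n_1,\ldots,n_{k-1})\,\square\,P_{n_k}$. By the inductive hypothesis $H:=G(n_1,\ldots,n_{k-1})$ has an interval $\Delta(H)$-coloring, and $P_{n_k}$ has an interval $\Delta(P_{n_k})$-coloring; by Theorem~\ref{mytheorem8}, $w(H\square P_{n_k})\leq w(H)+w(P_{n_k})=\Delta(H)+\Delta(P_{n_k})=\Delta(G)$. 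Combined with the trivial lower bound this gives $w(G)=\Delta(G)$ and $G\in\mathfrak{N}$. The cylinder case $C(m,2n)=P_m\square C_{2n}$ is handled the same way once one knows $w(C_{2n})=\Delta(C_{2n})=2$, which holds because $C_{2n}$ is properly $2$-edge-colorable and the coloring $1,2,1,2,\ldots,1,2$ around the cycle is an interval $2$-coloring. Then $w(P_m\square C_{2n})\leq w(P_m)+w(C_{2n})=\Delta(P_m)+2=\Delta(C(m,2n))$.

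The torus case $T(2m,2n)=C_{2m}\square C_{2n}$ is the most delicate, and I expect it to be the main obstacle, because here Theorem~\ref{mytheorem8} only gives $w\leq w(C_{2m})+w(C_{2n})=4$, whereas $\Delta(T(2m,2n))=4$, so in fact the bound from Theorem~\ref{mytheorem8} already matches $\Delta$. So actually the torus also follows: $4=\Delta(T(2m,2n))\leq w(T(2m,2n))\leq w(C_{2m})+w(C_{2n})=2+2=4$. Hence in all three cases the result reduces to (i) the classical computations $w(P_n)=\Delta(P_n)$ and $w(C_{2n})=\Delta(C_{2n})=2$, which are folklore and were cited in the text, and (ii) repeated application of the subadditivity bound $w(G\square H)\leq w(G)+w(H)$ from Theorem~\ref{mytheorem8}. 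The only genuinely careful point is the grid induction, where one must track that $\Delta$ of a Cartesian product is the sum of the $\Delta$'s of the factors (noted in Section~2) so that the telescoping of $w$-bounds lands exactly on $\Delta(G(n_1,\ldots,n_k))=\sum_{i:\,n_i\geq 2}\text{(contribution)}$; I would phrase the induction so that the upper and lower bounds meet at each stage, leaving no slack to propagate.
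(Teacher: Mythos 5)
Your proposal is correct, but it is worth pointing out that the paper itself contains no proof of Theorem~\ref{mytheorem11}: the result is quoted from Giaro and Kubale \cite{b7}, whose original argument proceeds by explicit constructions of interval $\Delta$-colorings of grids, cylinders and tori, and in fact predates the subadditivity bound of Theorem~\ref{mytheorem8} (which the paper attributes to the later works \cite{b9,b19}). Your route --- the trivial lower bound $w(G)\geq\Delta(G)$, the folklore facts $w(P_n)=\Delta(P_n)$ and $w(C_{2n})=2=\Delta(C_{2n})$ certified by the alternating colorings, and then a telescoping application of $w(G\square H)\leq w(G)+w(H)$ together with $\Delta(G\square H)=\Delta(G)+\Delta(H)$ from Section~2 --- is therefore a genuinely different and noticeably shorter derivation, and there is no circularity in using it here, since the proof of Theorem~\ref{mytheorem8} is a general product argument that does not rely on Theorem~\ref{mytheorem11}. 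What your approach buys is uniformity: all three families, including the torus (where $4=\Delta\leq w\leq 2+2=4$ is immediate, as you note), are handled by the same two ingredients, with the upper and lower bounds meeting at every stage so no slack accumulates in the grid induction. What the constructive approach buys is explicit colorings, which are often needed downstream when one wants to extend or modify them. Two small points to tidy in your write-up: in the grid induction, factors with $n_i=1$ must be disposed of separately, since $P_1$ has no edges and hence is not literally in $\mathfrak{N}$, but $G\square P_1\cong G$ so such factors can simply be deleted (the all-ones grid, a single vertex, being degenerate); and the base cases $w(P_2)=1=\Delta(P_2)$ and, for the cylinder with $m\in\{1,2\}$, $\Delta(C(m,2n))=\Delta(P_m)+2$ should be stated explicitly so that the claimed identity $w=\Delta$ is seen to hold there as well.
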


For the greatest possible number of colors in interval colorings of
cylinders and tori, Petrosyan and Karapetyan \cite{b20} proved the
following theorems:

\begin{theorem}
\label{mytheorem12} For any $m\in \mathbb{N}, n\geq 2$, we have
$W(C(m,2n))\geq 3m+n-2$.
\end{theorem}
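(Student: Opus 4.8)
The plan is to construct an explicit interval coloring of $C(m,2n) = P_m \square C_{2n}$ using $3m+n-2$ colors, then invoke Lemma~\ref{mylemma1}. Write $V(C(m,2n)) = \{v_j^{(i)} : 1\le i\le m,\ 1\le j\le 2n\}$, where the $i$-th copy of $C_{2n}$ sits on the vertices $v_1^{(i)},\ldots,v_{2n}^{(i)}$ and the path edges join $v_j^{(i)}$ to $v_j^{(i+1)}$. The idea is to exploit that $C_{2n}$ is $2$-regular with $W(C_{2n}) = n+1$, and to stack the $m$ cycle-copies so that each successive copy is shifted upward, but — unlike Theorem~\ref{mytheorem10} — we can afford a larger shift because the base graph here is $C_{2n}$ itself rather than an arbitrary regular graph. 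Concretely, I would first fix an interval $(n+1)$-coloring $\alpha$ of $C_{2n}$ in which the two edges incident to a distinguished vertex receive colors $1$ and $2$, and arrange the cycle so the "top" color $n+1$ also appears near a controlled vertex.

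The core step is to define, for the $i$-th copy $G^i \cong C_{2n}$, a shifted coloring $\beta$ on its edges obtained from $\alpha$ by adding an offset $\sigma(i)$, chosen so that consecutive copies overlap just enough for the connecting path-edges in each $E_j$ to receive consecutive colors at both endpoints. Since each vertex of $C_{2n}$ has degree $2$, the color set $S(v_j^{(i)},\beta)$ is an interval of length $2$; adding the (at most two) path-edges at $v_j^{(i)}$ must extend this to an interval of length up to $4$ (or $3$ at the two ends $i=1,m$). The accounting should be: the cycle contributes $n+1$ colors on the bottom copy, each of the remaining $m-1$ copies contributes a net gain of $\le 3$ new colors (two from the widened cycle-spectrum plus one from a path edge, or similar), giving $(n+1) + 3(m-1) = 3m+n-2$. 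I would lay out the coloring $\gamma$ on $E(C(m,2n)) = \bigcup_{i=1}^m E^i \cup \bigcup_{j=1}^{2n} E_j$ by a case analysis on $i \in \{1\}$, $2\le i\le m-1$, $\{m\}$ exactly as in the proof of Theorem~\ref{mytheorem9}, verifying in each case that $S(v_j^{(i)},\gamma)$ is an interval.

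Finally, having checked that the minimum color used is $1$ (realized on the bottom copy $G^1$, where $\gamma = \beta = \alpha$) and the maximum is $3m+n-2$ (realized on the top copy $G^m$ or on a path edge leaving it), Lemma~\ref{mylemma1} immediately gives that $\gamma$ is an interval $(3m+n-2)$-coloring, hence $W(C(m,2n)) \ge 3m+n-2$.

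The main obstacle I anticipate is choosing the offsets $\sigma(i)$ and the initial cycle-coloring $\alpha$ so that \emph{both} path-edges entering a vertex $v_j^{(i)}$ from copies $i-1$ and $i+1$ can be colored consistently while keeping $S(v_j^{(i)},\gamma)$ an interval for \emph{every} $j$ simultaneously — the cycle $C_{2n}$ is not edge-transitive enough to make all vertices behave identically, so one likely needs $\sigma(i)$ to depend on the parity of $i$ (reusing colors on the path edges in alternating copies, as the $+1/+2$ bookkeeping in Theorem~\ref{mytheorem10} does) rather than a uniform arithmetic shift; getting the arithmetic so the final count is exactly $3m+n-2$ and not $3m+n-1$ is the delicate part.
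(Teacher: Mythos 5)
Your overall route is sound, but as written it is not yet a proof, and the obstacle you flag at the end is not actually there. Note first that this paper does not prove Theorem \ref{mytheorem12} at all: it is quoted from Petrosyan and Karapetyan \cite{b20}. Within this paper, however, the bound is an immediate consequence of Theorem \ref{mytheorem9}: $C(m,2n)=P_{m}\square C_{2n}\cong C_{2n}\square P_{m}$, the graph $C_{2n}$ is $2$-regular with $C_{2n}\in\mathfrak{N}$, $W\left(C_{2n}\right)=n+1$ (for $n\geq 2$) and $W\left(P_{m}\right)=m-1$, so Theorem \ref{mytheorem9} gives $W(C(m,2n))\geq (n+1)+(m-1)+2(m-1)=3m+n-2$. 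Your sketch is exactly this specialization, and your total count $(n+1)+3(m-1)$ is correct.

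The gap is that you never actually fix the offsets $\sigma(i)$ or the connector colors; you explicitly leave the vertex-by-vertex consistency check as an ``anticipated obstacle'' and suggest that parity-dependent shifts and a specially normalized base coloring $\alpha$ (colors $1,2$ at a distinguished vertex) may be needed. None of that is necessary: take an arbitrary interval $(n+1)$-coloring $\alpha$ of $C_{2n}$, use the uniform shift $\sigma(i)=(i-1)(r+1)=3(i-1)$ on the $i$-th copy, and color each connector $v_{j}^{(i)}v_{j}^{(i+1)}$ with $\max S\left(v_{j}^{(i)},\beta\right)+1$. The verification that every $S\left(v_{j}^{(i)},\gamma\right)$ is an interval uses only the identity $\max S\left(v_{j},\alpha\right)-\min S\left(v_{j},\alpha\right)=r-1=1$, valid at every vertex of the regular graph $C_{2n}$; this is precisely Cases 1--3 in the proof of Theorem \ref{mytheorem9}, so no tuning of $\alpha$ and no dependence of the shift on the parity of $i$ is required. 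The $+1$ versus $+2$ bookkeeping you allude to is only needed in Theorem \ref{mytheorem10}, where the second factor is a cycle and the stack of copies must close up; with a path factor the uniform shift already puts the largest color at $(n+1)+3(m-1)=3m+n-2$ on the top copy and color $1$ on the bottom copy, and Lemma \ref{mylemma1} then finishes the argument as you say. So either carry out that construction explicitly or, more economically, just invoke Theorem \ref{mytheorem9} with $G=C_{2n}$, $r=2$.
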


\begin{theorem}
\label{mytheorem13} For any $m,n \geq 2$, we have $W(T(2m,2n))\geq
\max\{3m+n,3n+m\}$.
\end{theorem}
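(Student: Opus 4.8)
The plan is to imitate the structure of the proof of Theorem~\ref{mytheorem10}, but now treating the torus $T(2m,2n)=C_{2m}\square C_{2n}$ in a way that exploits both cyclic directions. Write $T(2m,2n)$ as $2n$ copies $G^{1},\dots,G^{2n}$ of $C_{2m}$, linked by the $2m$ ``vertical'' cycles $E_{j}$. Since $C_{2m}$ is $2$-regular and $C_{2m}\in\mathfrak{N}$ with $W(C_{2m})=m+1$, we may take an interval $(m+1)$-coloring $\alpha$ of $C_{2m}$. Applying Theorem~\ref{mytheorem10} with $G=C_{2m}$, $r=2$ already gives
\begin{center}
$W(T(2m,2n))=W(C_{2m}\square C_{2n})\geq W(C_{2m})+W(C_{2n})+2n=(m+1)+(n+1)+2n=m+3n+2$,
\end{center}
and by symmetry (swapping the roles of the two cycle factors) also $W(T(2m,2n))\geq 3m+n+2$. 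Thus $W(T(2m,2n))\geq\max\{3m+n,3n+m\}+2$, which is in fact slightly stronger than the claimed bound; so the real content is just to invoke Theorem~\ref{mytheorem10} in both orientations, and the theorem follows immediately. (If one wants exactly the stated bound with a self-contained argument rather than a citation to Theorem~\ref{mytheorem10}, one reproduces that construction here.)

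If instead the intended proof is a direct one not routed through Theorem~\ref{mytheorem10}, then I would build the coloring by hand as follows. Color the edges of $G^{i}$ by shifting $\alpha$: use offset $0$ on $G^{1}$, offsets growing like $(i-1)\cdot 3$ for $i=2,\dots,n+1$ (each copy of $C_{2m}$ occupies a block of $3=r+1$ consecutive colors, leaving a one-color gap to be filled by the connecting vertical edges), and symmetrically decreasing offsets $(2n+1-i)\cdot 3$ for $i=n+2,\dots,2n$, exactly mirroring the three-case/four-case split in Theorem~\ref{mytheorem10}. The vertical edges $v_{j}^{(i)}v_{j}^{(i+1)}$ and the wrap-around edges $v_{j}^{(1)}v_{j}^{(2n)}$ are then colored to sit just above $\max S(v_{j}^{(i)},\beta)$, so that at every vertex the two horizontal colors plus the two vertical colors form an interval of length $4=\Delta(T(2m,2n))-?$; one checks in each of the cases $i=1$, $2\le i\le n$, $i=n+1$, $n+2\le i\le 2n$ that the union of the four color values is a set of consecutive integers. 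Finally, color $1$ is realized on $G^{1}$ (where $\gamma=\alpha$) and the top color $W(C_{2m})+n\cdot 3+1=m+3n+2$ is realized on $G^{n+1}$; Lemma~\ref{mylemma1} then upgrades the ``consecutive at each vertex, min $=1$, max $=t$'' property to a genuine interval $t$-coloring.

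The only place requiring care is the bookkeeping of offsets at the ``seam'' copies $G^{1}$ and $G^{n+1}$, where two connecting edges (rather than one) meet each vertex and the blocks from the ascending and descending halves must abut without overlap and without a gap — this is precisely what forces the choice of shift $(i-1)(r+1)+1$ on the ascending side versus $(2n+1-i)(r+1)$ on the descending side in Theorem~\ref{mytheorem10}, and the same asymmetric choice works here. Everything else is the routine interval-arithmetic verification in the four cases above. Given that Theorem~\ref{mytheorem10} is already available in the paper, the cleanest route is simply to apply it twice, once for each factor, and take the maximum.
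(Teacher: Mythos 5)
Your reduction to Theorem~\ref{mytheorem10} is correct: taking $G=C_{2m}$ (which is $2$-regular with $W(C_{2m})=m+1$ for $m\geq 2$) gives $W(T(2m,2n))\geq (m+1)+(n+1)+2n=m+3n+2$, and swapping the two cycle factors gives $3m+n+2$, so the claimed bound follows with two colors to spare; the hand-built construction in your second paragraph is therefore unnecessary. This is essentially the paper's own route: Theorem~\ref{mytheorem13} is only quoted from \cite{b20} without proof, but the stronger bound $\max\{3m+n+2,3n+m+2\}$ of Theorem~\ref{mytheorem18} is obtained in the paper by exactly this double application of Theorem~\ref{mytheorem10}.
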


First we consider grids. It is easy to see that
$W\left(G(2,n)\right)=2n-1$ for any $n\in \mathbb{N}$. Now we
provide a lower bound for $W\left(G(m,n)\right)$ when $m,n\geq 2$.

\begin{theorem}
\label{mytheorem14} For any $m,n\geq 2$, we have $W(G(m,n))\geq
2(m+n-3)$.
\end{theorem}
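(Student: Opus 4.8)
The plan is to view $G(m,n) = P_m \square P_n$ as an iterated Cartesian product of paths and bootstrap from the one-dimensional case using the machinery already developed. Concretely, $G(m,n) = P_m \square P_n$, and although $P_m$ is not regular (so Theorem~\ref{mytheorem9} does not apply directly), I would instead build the desired interval coloring by hand, imitating the shifting construction in the proof of Theorem~\ref{mytheorem9}. The target value is $W(G(m,n)) \ge 2(m+n-3) = (2m-3)+(2n-3)$, which is suggestive: it is roughly $W(P_{m-1}\square P_n)$-type growth, i.e. each of the two path-directions should contribute about $2$ per step except near the boundary. Note $\Delta(G(m,n)) = 4$ in the interior and the graph is bipartite with diameter $m+n-2$, so Theorem~\ref{mytheorem3} gives the upper bound $W(G(m,n)) \le 3(m+n-2)+1$; the claimed lower bound is well within range.

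The construction I would carry out: decompose $V(G(m,n)) = \bigcup_{i=1}^m V^i$ into the $m$ ``rows'' $V^i = \{v_j^{(i)} : 1\le j\le n\}$, each row inducing a copy $P_n^{(i)}$ of $P_n$. Fix an interval coloring of $P_n$ using colors $1,\dots,n-1$ that is ``ascending'': color the edge $v_jv_{j+1}$ of $P_n$ with color $j$. Then, as in Theorem~\ref{mytheorem9}, shift the coloring of the $i$-th row copy by a fixed amount depending on $i$ — here the natural shift is by $2(i-1)$, since each vertical edge plus the structure of the path should buy us $2$ colors per row step rather than $r+1$ — and color the vertical edges $v_j^{(i)}v_j^{(i+1)}$ so as to bridge consecutive rows' intervals without creating a gap at any vertex. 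The vertical edge at $v_j^{(i)}$ should receive $\max S(v_j^{(i)},\beta)+1$ (the row-coloring color set shifted), exactly the recipe of Theorem~\ref{mytheorem9}; one must check separately the first row, the interior rows, and the last row, and within each row the first vertex, interior vertices, and last vertex of the path, verifying in each of these $3\times 3$ cases that the color set at the vertex is a genuine interval. Finally, locating an edge colored $1$ (bottom-left) and an edge colored $2(m+n-3)$ (top-right, where the shift $2(m-1)$ plus the largest path color $n-1$ plus the extra $+1$'s accumulate to the target) and invoking Lemma~\ref{mylemma1} finishes the argument.

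The main obstacle is getting the arithmetic of the shifts to land exactly on $2(m+n-3)$ rather than something slightly larger or smaller, and simultaneously keeping every vertex-color-set an interval. A shift of $2$ per row is tight: the vertical edge contributes one new color upward and one downward, and the path-edges at an interior vertex already span $2$ consecutive colors, so the windows at adjacent rows must overlap in exactly the right way — too large a shift opens a gap, too small a shift causes a repeated color. I would expect the boundary rows ($i=1$ and $i=m$) and boundary columns ($j=1$ and $j=n$) to need the most careful bookkeeping, since there the degree drops and the ``$-3$'' in the bound comes precisely from the four corners/edges of the grid where we lose a unit of shift. If the clean shift-by-$2$ pattern does not close up exactly, a fallback is to use a mild zig-zag (alternating ascending/descending path colorings in consecutive rows, as is done for cycles in Theorem~\ref{mytheorem10}), which often recovers an extra unit of slack; but I anticipate the straightforward shift construction suffices for the stated bound.
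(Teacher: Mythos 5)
There is a genuine gap: the construction you pin down does not work, and the part you leave open is exactly where the theorem's content lies. Concretely, if each row copy of $P_n$ is colored ascending ($v_j^{(i)}v_{j+1}^{(i)}\mapsto j$), shifted by $2(i-1)$, and each vertical edge $v_j^{(i)}v_j^{(i+1)}$ gets $\max S\left(v_j^{(i)},\beta\right)+1$, then the coloring is not even proper: at an interior vertex $v_j^{(i)}$ the incoming vertical edge from row $i-1$ has color $j+2(i-2)+1=j+2i-3$, which coincides with the color $j-1+2(i-1)=j+2i-3$ of the horizontal edge $v_{j-1}^{(i)}v_j^{(i)}$ (already in $G(3,3)$ the edges $v_2^{(1)}v_2^{(2)}$ and $v_1^{(2)}v_2^{(2)}$ both get color $3$). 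The structural reason is that in Theorem~\ref{mytheorem9} the shift $r+1$ exceeds the palette width $r$ at each vertex, leaving a one-color gap for the connecting edge, whereas here the shift ($2$) equals the palette width at interior vertices, so consecutive rows' intervals at a column abut and there is no slot for the vertical edge. Moreover, even ignoring properness, the largest color your scheme can produce is about $(n-1)+2(m-1)+1=2m+n-2$, which falls short of $2(m+n-3)$ by roughly $n-3$: coloring each row with an ordinary interval coloring of $P_n$ (consecutive colors, step $1$ along the row) makes only the vertical direction contribute $2$ per step, while the horizontal direction contributes $1$.

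The missing idea, which is what the paper's explicit coloring implements, is to make \emph{both} directions contribute $2$ per step: the horizontal edges within a row are given an arithmetic progression of common difference $2$ (essentially $\alpha\left(v_j^{(i)}v_{j+1}^{(i)}\right)=2(i+j)-4$ for $i\geq 2$), and the vertical edges take the interleaved odd-offset values $2(i+j)-3$, so that at an interior vertex the four colors $2(i+j)-6,\,2(i+j)-5,\,2(i+j)-4,\,2(i+j)-3$ form an interval; separate adjustments are made in the first row and the last column to keep the boundary vertices' sets intervals, and the maximum color $2(m+n-1)-4=2(m+n-3)$ is attained at the top-right horizontal edge. Your zig-zag fallback does not address the shortfall either, since it still colors each row with step-$1$ progressions; so as written the proposal does not establish the bound.
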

\begin{proof}For the proof, we are going to construct an edge-coloring
 of the graph $G(m,n)$ that satisfies the specified conditions.

Let $V(G(m,n))=\left\{v_{j}^{(i)}\colon\,1\leq i\leq m,1\leq j\leq
n\right\}$ and $E(G(m,n))=\bigcup_{i=1}^{m}E^{i}\cup
\bigcup_{j=1}^{n}E_{j}$, where
\begin{center}
$E^{i}=\left\{v_{j}^{(i)}v_{j+1}^{(i)}\colon\,1\leq j\leq n-1
\right\}$ and $E_{j}=\left\{v_{j}^{(i)}v_{j}^{(i+1)}\colon\,1\leq
i\leq m-1\right\}$.
\end{center}

Define an edge-coloring $\alpha$ of $G(m,n)$ as follows:\\
\begin{description}
\item[(1)] for $i=1,\ldots,m-1$, $j=1,\ldots,n-1$, let
\begin{center}
$\alpha\left(v_{j}^{(i)}v_{j}^{(i+1)}\right)=2(i+j)-3$;
\end{center}
\item[(2)] for $i=1,\ldots,m-1$, let
\begin{center}
$\alpha\left(v_{n}^{(i)}v_{n}^{(i+1)}\right)=2(n+i)-5$;
\end{center}
\item[(3)] for $j=1,\ldots,n-1$, let
\begin{center}
$\alpha\left(v_{j}^{(1)}v_{j+1}^{(1)}\right)=2j$;
\end{center}
\item[(4)] for $i=2,\ldots,m$, $j=1,\ldots,n-1$, let
\begin{center}
$\alpha\left(v_{j}^{(i)}v_{j+1}^{(i)}\right)=2(i+j)-4$.
\end{center}
\end{description}

It is easy to see that $\alpha$ is an interval $(2(m+n-3))$-coloring
of $G(m,n)$ when $m,n\geq 2$.~$\square$
\end{proof}

Note that the lower bound in Theorem \ref{mytheorem14} is not far
from the upper bound for $W\left(G(m,n)\right)$, since $G(m,n)$ is
bipartite, $2\leq \Delta\left(G(m,n)\right)\leq 4$ and
$\mathrm{diam}\left(G(m,n)\right)=m+n-2$, by Theorem
\ref{mytheorem3}, we have $W\left(G(m,n)\right)\leq 3(m+n-2)+1$.\\

From Theorems \ref{mytheorem8} and \ref{mytheorem14}, we have:

\begin{corollary}
\label{mycorollary3} If $n_{1}\geq\cdots \geq n_{2k}\geq 2$ $(k\in
\mathbb{N})$, then
\begin{center}
$W(G(n_{1},n_{2},\ldots,n_{2k}))\geq 2\sum_{i=1}^{2k}n_{i}-6k$,
\end{center}
and if $n_{1}\geq\cdots \geq n_{2k+1}\geq 2$ $(k\in
\mathbb{N})$, then
\begin{center}
$W(G(n_{1},n_{2},\ldots,n_{2k+1}))\geq
2\sum_{i=1}^{2k}n_{i}+n_{2k+1}-6k-1$.
\end{center}
\end{corollary}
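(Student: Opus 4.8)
The plan is to derive Corollary~\ref{mycorollary3} by repeated application of the superadditivity bound $W(G\square H)\geq W(G)+W(H)$ from Theorem~\ref{mytheorem8}, using the two-dimensional estimate of Theorem~\ref{mytheorem14} as the basic building block. The key observation is that a $2k$-dimensional grid can be written as a Cartesian product of $k$ two-dimensional grids, and a $(2k+1)$-dimensional grid as that product with one extra path factor; since all grids are interval colorable (they lie in $\mathfrak{N}$ by Theorem~\ref{mytheorem11}, or simply by Theorem~\ref{mytheorem7} applied inductively), Theorem~\ref{mytheorem8} lets us add the $W$-values.

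For the even case, I would group the factors in consecutive pairs:
\[
G(n_{1},\ldots,n_{2k})=G(n_{1},n_{2})\,\square\,G(n_{3},n_{4})\,\square\cdots\square\,G(n_{2k-1},n_{2k}).
\]
Applying Theorem~\ref{mytheorem8} ($k-1$ times, say by induction on $k$) gives
\[
W(G(n_{1},\ldots,n_{2k}))\ \geq\ \sum_{\ell=1}^{k}W\bigl(G(n_{2\ell-1},n_{2\ell})\bigr).
\]
Each $n_{i}\geq 2$, so Theorem~\ref{mytheorem14} applies to every pair and gives $W(G(n_{2\ell-1},n_{2\ell}))\geq 2(n_{2\ell-1}+n_{2\ell}-3)$. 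Summing over $\ell$ yields $2\sum_{i=1}^{2k}n_{i}-6k$, as claimed.

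For the odd case I would split off the last (smallest) factor and handle the first $2k$ coordinates exactly as above:
\[
G(n_{1},\ldots,n_{2k+1})=G(n_{1},\ldots,n_{2k})\,\square\,P_{n_{2k+1}},
\]
so by Theorem~\ref{mytheorem8} (or directly by Theorem~\ref{mytheorem7}) together with the even case,
\[
W(G(n_{1},\ldots,n_{2k+1}))\ \geq\ \Bigl(2\sum_{i=1}^{2k}n_{i}-6k\Bigr)+W(P_{n_{2k+1}})\ =\ 2\sum_{i=1}^{2k}n_{i}-6k+(n_{2k+1}-1),
\]
using the stated value $W(P_{m})=m-1$. This is precisely $2\sum_{i=1}^{2k}n_{i}+n_{2k+1}-6k-1$.

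There is essentially no obstacle here: the corollary is a bookkeeping consequence of two already-proved results, and the only things to be careful about are (i) checking that every factor to which Theorem~\ref{mytheorem14} is applied really has both dimensions $\geq 2$ (guaranteed by the hypothesis $n_{i}\geq 2$ and, in fact, not even needing the ordering $n_1\geq\cdots$), and (ii) making sure the inductive application of Theorem~\ref{mytheorem8} is legitimate, which it is since each intermediate Cartesian product is again an interval colorable graph by Theorems~\ref{mytheorem7}--\ref{mytheorem8}. The ordering hypothesis $n_1\geq\cdots\geq n_{2k+1}$ in the odd case is used only to make the ``leftover'' coordinate the smallest one, which does not affect the validity of the bound but makes it the best version of this particular argument.
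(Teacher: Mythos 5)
Your proposal is correct and is exactly the argument the paper intends: the corollary is stated as an immediate consequence of Theorems~\ref{mytheorem8} and \ref{mytheorem14}, obtained by pairing the factors into two-dimensional grids, applying Theorem~\ref{mytheorem14} to each pair, and using the superadditivity $W(G\square H)\geq W(G)+W(H)$ (plus $W(P_{n_{2k+1}})=n_{2k+1}-1$ for the odd case).
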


Next we consider cylinders. In \cite{b18}, Khchoyan proved the
following:

\begin{theorem}
\label{mytheorem15} For any $n\geq 3$, we have
\begin{description}
\item[(1)] $C(2,n)\in \mathfrak{N}$,

\item[(2)] $w\left(C(2,n)\right)=3$,

\item[(3)] $W\left(C(2,n)\right)=n+2$,

\item[(4)] if $w\left(C(2,n)\right)\leq t\leq W\left(C(2,n)\right)$, then $C(2,n)$
has an interval $t$-coloring.
\end{description}
\end{theorem}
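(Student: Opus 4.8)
\textbf{Proof proposal for Theorem~\ref{mytheorem15}.}

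The plan is to handle the four parts by exhibiting explicit interval colorings and matching bounds, treating $C(2,n)=P_2\square C_n$ as two $n$-cycles joined by a perfect matching of ``rungs''. Write $V(C(2,n))=\{u_1,\ldots,u_n\}\cup\{w_1,\ldots,w_n\}$, where $u_1\cdots u_nu_1$ and $w_1\cdots w_nw_1$ are the two cycles and $u_iw_i$ are the rungs; every vertex has degree $3$, so $\Delta(C(2,n))=3$ and hence $w(C(2,n))\ge 3$ and, by Theorem~\ref{mytheorem2} (with $\mathrm{diam}(C(2,n))=1+\lfloor n/2\rfloor$), $W(C(2,n))$ is finite. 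For part~(1) and part~(2) I would produce one concrete interval $3$-coloring: color the rungs alternately, $\alpha(u_iw_i)=2$ for $i$ odd and (adjusting at the wrap-around when $n$ is odd) try to keep it to colors in $\{1,2,3\}$ on the cycle edges so that each vertex sees exactly $\{1,2,3\}$ or a length-$3$ sub-interval thereof; concretely, on the $u$-cycle use colors $1,3$ alternately and on the $w$-cycle use colors $3,1$ alternately, then patch the single defect edge created when $n$ is odd. Once such a $3$-coloring is displayed, Lemma~\ref{mylemma1} (all three colors clearly appear) gives $w(C(2,n))=3$, proving~(1) and~(2).

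For the upper bound in part~(3), $W(C(2,n))\le n+2$, I would argue as follows. Fix an interval $t$-coloring $\beta$. For each rung $u_iw_i$ the two endpoints have degree $3$, so $S(u_i,\beta)$ and $S(w_i,\beta)$ are $3$-term intervals both containing $\beta(u_iw_i)$; hence the two cycle-edges at $u_i$ have colors $\beta(u_iw_i)\pm1$ in some order (one $+1$, one $-1$, or both on one side is impossible since then they'd coincide), and similarly at $w_i$. Let $f(i)=\beta(u_iw_i)$. Moving from rung $i$ to rung $i{+}1$ along the two cycle-edges $u_iu_{i+1}$ and $w_iw_{i+1}$: these two edges are incident to $u_i,w_i$ and to $u_{i+1},w_{i+1}$, and a short parity/interval analysis shows $|f(i{+}1)-f(i)|\le 1$ when one moves one step, with the colors $1$ and $t$ each forced to occur as some $f(i)$ or within distance $1$ of it. Summing these unit steps around the cycle $C_n$ and using that the ``profile'' $f$ must return to its start yields $t-1\le \lceil n/2\rceil +$ (a small constant), which I expect to sharpen exactly to $t\le n+2$; the even/odd distinction in $n$ is where the $+2$ (rather than $+1$) comes from. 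The main obstacle is making this telescoping argument tight rather than merely $O(n)$: one must track not just $|f(i+1)-f(i)|$ but which of $u$-side/$w$-side carries the increment, essentially showing the increments alternate or cancel in pairs, so that the total variation of $f$ around $C_n$ is at most roughly $n$, not $3n$.

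For the matching lower bound $W(C(2,n))\ge n+2$ in part~(3) and the interpolation statement~(4), I would build an explicit interval $(n+2)$-coloring by a ``staircase'' construction reminiscent of the proof of Theorem~\ref{mytheorem14}: set $\beta(u_iw_i)=i+1$ for $1\le i\le n$ (wrapping so the rung colors run $2,3,\ldots,n+1$), assign to the $u$-cycle edge $u_iu_{i+1}$ the color $i$ and to the $w$-cycle edge $w_iw_{i+1}$ the color $i+2$, and then fix the two wrap-around edges $u_nu_1$ and $w_nw_1$ by hand so every vertex still sees a $3$-term interval; checking $\min=1$, $\max=n+2$ and invoking Lemma~\ref{mylemma1} finishes it. For~(4), given $3\le t\le n+2$, I would either compress the staircase (reuse the palette $[1,t]$ cyclically, which is where one must verify no adjacency conflict and that the interval property survives the wrap), or, cleaner, appeal to Theorem~\ref{mytheorem1}(2): although $C(2,n)$ is not regular, one can instead note that removing the last few ``levels'' of the staircase and re-gluing produces interval colorings for each intermediate $t$ directly. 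I expect the tight upper bound in~(3) to be the genuinely hard step; parts~(1),~(2),~(4) and the lower half of~(3) are explicit constructions plus routine verification via Lemma~\ref{mylemma1}.
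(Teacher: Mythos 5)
The paper does not actually prove this theorem (it is quoted from Khchoyan's thesis), so your proposal stands on its own, and as written it has genuine gaps. First, note that $C(2,n)=P_{2}\square C_{n}$ is $3$-regular (each vertex has one rung and two cycle edges), contrary to your remark that it is not regular. This matters: part (4) follows immediately from Theorem \ref{mytheorem1}(2) once (1)--(3) are known, and (1)--(2) reduce to showing the prism is Class 1 (e.g.\ color a Hamiltonian cycle, which has even length $2n$, alternately $1,2$ and give color $3$ to the complementary perfect matching). By contrast, your ``all rungs get color $2$'' scheme cannot be repaired at a single defect edge when $n$ is odd: if every rung has color $2$, then at every vertex the two cycle edges are forced to be $1$ and $3$, i.e.\ both odd cycles would have to be $2$-edge-colored, which is impossible, so any repair must also change rung colors and re-verify the interval condition there.

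Second, neither half of (3) is established. For the upper bound, your key structural claim is false: a $3$-term interval $S(u_{i},\beta)$ containing $f(i)=\beta(u_{i}w_{i})$ need not be centered at $f(i)$, so the two cycle edges at $u_{i}$ may be colored $\{f(i)+1,f(i)+2\}$ or $\{f(i)-2,f(i)-1\}$, not only $f(i)\pm 1$; consequently $|f(i+1)-f(i)|$ can be as large as $4$ locally, the unit-step telescoping never gets off the ground, and you yourself only ``expect'' it to sharpen to $t\le n+2$. For the lower bound, the monotone staircase fails twice: at an interior vertex $w_{i}$ the rung color $i+1$ collides with the color $i+1$ of $w_{i-1}w_{i}$, so the coloring is not even proper; and, more fundamentally, no monotone-around-the-cycle pattern can be ``fixed by hand'' at the seam, since $\beta(u_{n}u_{1})$ would have to lie simultaneously in a $3$-interval containing the colors $1,2$ seen at $u_{1}$ and in one containing the colors $n-1,n+1$ seen at $u_{n}$, which is impossible for $n\ge 4$. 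The correct extremal colorings rise for half of the cycle and descend on the other half --- exactly the tent-shaped pattern used in the proof of Theorem \ref{mytheorem17}, whose case $m=1$ already yields interval $(n+2)$-colorings of $C(2,n)$ --- and a complete proof of $W(C(2,n))\le n+2$ must exploit this two-sided behaviour rather than a one-sided staircase.
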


Now we prove some general results on cylinders.

\begin{theorem}
\label{mytheorem16} For any $m\geq 3, n\in \mathbb{N}$, we have
$C(m,2n+1)\in \mathfrak{N}$ and
\begin{center}
$w\left(C(m,2n+1)\right)= \left\{
\begin{tabular}{ll}
$4$, & if $m$ is even,\\
$6$, & if $m$ is odd.\\
\end{tabular}%
\right.$
\end{center}
\end{theorem}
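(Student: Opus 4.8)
\medskip
\noindent\textbf{A proof plan.}
Write $C(m,2n+1)=P_m\square C_{2n+1}$ and index the vertices by $v_j^{(i)}$ with $1\le i\le m$ (the $P_m$-coordinate) and $1\le j\le 2n+1$ (the $C_{2n+1}$-coordinate, indices $j$ taken cyclically), so that the ``horizontal'' edges $v_j^{(i)}v_{j+1}^{(i)}$ make up the copy $C^{i}$ of $C_{2n+1}$ in row $i$ and the ``vertical'' edges $v_j^{(i)}v_j^{(i+1)}$ make up the copy of $P_m$ in column $j$. Since $d_{P_m}(i)\in\{1,2\}$ and $C_{2n+1}$ is $2$-regular, every vertex of $C(m,2n+1)$ has degree $3$ (when $i\in\{1,m\}$) or $4$ (when $2\le i\le m-1$); in particular $\Delta(C(m,2n+1))=4$ because $m\ge 3$, so every interval coloring uses at least $4$ colors. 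Both $C(m,2n+1)\in\mathfrak N$ and the stated values of $w$ will follow once we exhibit an interval $4$-coloring when $m$ is even and an interval $6$-coloring when $m$ is odd; it therefore remains to prove the lower bound $w(C(m,2n+1))\ge 6$ for odd $m$, and to give these two constructions.

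\smallskip\noindent\emph{Lower bound for odd $m$.}
Suppose $\gamma$ were an interval $t$-coloring of $C(m,2n+1)$ with $t\le 5$. For each vertex $v$ the set $S(v,\gamma)$ is an interval of $d(v)-1\in\{2,3\}$ consecutive integers inside $[1,t]\subseteq[1,5]$, and each of the only possibilities $[1,3],[2,4],[3,5],[1,4],[2,5]$ contains $3$; hence $3\in S(v,\gamma)$ for \emph{every} vertex $v$. Thus the edges coloured $3$ form a perfect matching of $C(m,2n+1)$, forcing $|V(C(m,2n+1))|=m(2n+1)$ to be even --- impossible, since $m$ and $2n+1$ are both odd. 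Hence $w(C(m,2n+1))\ge 6$ for odd $m$.

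\smallskip\noindent\emph{The case $m$ even.}
Colour every copy $C^{i}$ identically: put colour $1$ on $v_j^{(i)}v_{j+1}^{(i)}$ for odd $j\le 2n-1$, colour $2$ on it for even $j\le 2n$, and colour $3$ on the closing edge $v_{2n+1}^{(i)}v_1^{(i)}$. Colour the vertical edge $v_j^{(i)}v_j^{(i+1)}$ by $3$ if $i$ is odd and $4$ if $i$ is even when $2\le j\le 2n$; by $2$ if $i$ is odd and $4$ if $i$ is even when $j=1$; and by $1$ if $i$ is odd and $4$ if $i$ is even when $j=2n+1$. A routine check (using that $m-1$ is odd, so the vertical edges entering row $m$ carry colours $2,3,1$ in columns $1$, $2\le j\le 2n$, $2n+1$ respectively) shows that this is a proper edge colouring in which every degree-$4$ vertex sees $\{1,2,3,4\}$ and every degree-$3$ vertex sees $\{1,2,3\}$; all four colours occur, with smallest $1$ and largest $4$ (the latter since $m\ge 4$). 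By Lemma~\ref{mylemma1} this is an interval $4$-coloring, so $w(C(m,2n+1))=4$ for even $m$.

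\smallskip\noindent\emph{The case $m$ odd --- the main obstacle.}
Here a row-uniform colouring is impossible (by the lower bound just proved, colours $5$ and $6$ must genuinely be used), and one sees that merely stacking the even-$m$ colouring onto one extra row fails, because the leftover freedom forces a proper colouring of the odd cycle $C^{m}$ from a palette whose ``compatibility graph'' is bipartite. The plan is instead to write down an explicit interval $6$-coloring with \emph{graded} vertex intervals: give each vertex $v$ a base value $\ell(v)\in\{1,2,3,4\}$, prescribe $S(v,\gamma)=[\ell(v),\ell(v)+d(v)-1]$, make $\ell$ nondecreasing in $i$ and increasing by $1$ along each $C^{i}$ except for one controlled decrease (forced by $2n+1$ being odd), so that values at adjacent vertices differ by at most $1$, with $\ell\equiv 1$ reached at a vertex of row $1$ and $\ell\equiv 4$ at the ``opposite'' vertex of row $m$, and then colour each edge $uv$ by a value of $S(u,\gamma)\cap S(v,\gamma)$. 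For $m=3$ one checks directly that this works --- e.g.\ the colour sets $\{1,2,3\},\{2,3,4\},\{2,3,4\}$ along row $1$, $\{1,2,3,4\},\{2,3,4,5\},\{3,4,5,6\}$ along row $2$, and $\{2,3,4\},\{3,4,5\},\{4,5,6\}$ along row $3$ are realised by a proper coloring --- and the same graded scheme should extend to all odd $m\ge 3$ and all $n\ge 1$. The technical heart, which I expect to be the hardest part, is to verify uniformly in $m$ and $n$ that the prescribed sets $S(u,\gamma)\cap S(v,\gamma)$ are always nonempty and admit a system of distinct representatives giving a proper edge colouring, the delicate point being the wrap-around edges $v_{2n+1}^{(i)}v_1^{(i)}$ of the odd cycles. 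Granting this, Lemma~\ref{mylemma1} yields an interval $6$-coloring, and together with the lower bound we conclude $w(C(m,2n+1))=6$ for odd $m$.
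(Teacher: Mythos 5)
Your lower bound for odd $m$ is exactly the paper's argument (with $t\le 5$ every vertex palette must contain color $3$, so the color-$3$ edges would form a perfect matching, impossible since $m(2n+1)$ is odd), and your explicit row-uniform $4$-coloring for even $m$ is correct and even a little more self-contained than the paper, which instead stacks $m/2$ copies of an interval $3$-coloring of $C(2,2n+1)$ (Theorem~\ref{mytheorem15}) joined by color $4$. The problem is the upper bound for odd $m$: you never actually produce an interval $6$-coloring. Your ``graded'' scheme is only a plan --- you say the palettes ``should extend to all odd $m\ge 3$ and all $n\ge 1$'' and conclude ``granting this'' --- and the one case you check is really only $C(3,3)$ (you list three palettes per row, i.e.\ $n=1$), with no edge coloring exhibited for general $n$. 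As stated the scheme is also underdetermined: on an odd cycle you cannot have the base value increase by $1$ at every step except a single decrease while keeping adjacent values within $1$ of each other (your own $C(3,3)$ palettes already deviate from that description), and the existence of a proper system of representatives from the sets $S(u)\cap S(v)$ is precisely the nontrivial point, not a routine verification. So the claim $w(C(m,2n+1))\le 6$ for odd $m$, which is the technical heart of the theorem, is left unproved.

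For comparison, the paper closes this gap by a reduction that you could adopt: it writes down an explicit interval $6$-coloring $\gamma$ of $C(3,2n+1)$ (uniform in $n$) with the extra property that the third row has palette exactly $[1,3]$ at every vertex, and then, for odd $m\ge 5$, glues this $C(3,2n+1)$ to the interval $4$-coloring of the remaining $C(m-3,2n+1)$ (whose boundary row also has palette $[1,3]$) by coloring all connecting vertical edges with color $4$. This confines the genuinely delicate construction to a single three-row cylinder with a controlled boundary palette, after which the general odd case follows immediately; your write-up needs either this or a fully verified version of your graded construction before the case $m$ odd can be considered proved.
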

\begin{proof} Let $V(C(m,2n+1))=\left\{v_{j}^{(i)}\colon\,1\leq i\leq m,1\leq j\leq
2n+1\right\}$ and $E(C(m,2n+1))=\bigcup_{i=1}^{m}{E}^{i}\cup
\bigcup_{j=1}^{2n+1}{E}_{j}$ , where
\begin{center}
$E^{i}=\left\{v_{j}^{(i)}v_{j+1}^{(i)}\colon\,1\leq j\leq
2n\right\}\cup \left\{v_{1}^{(i)}v_{2n+1}^{(i)}\right\}$,
$E_{j}=\left\{v_{j}^{(i)}v_{j}^{(i+1)}\colon\,1\leq i\leq
m-1\right\}$.
\end{center}

First we show that if $m$ is even, then $C(m,2n+1)$ has an interval
$4$-coloring.

For $1\leq i\leq \frac{m}{2}$, define a subgraph $C^{i}$ of the
graph $C(m,2n+1)$ as follows:
\begin{center}
$C^{i}=\left(V^{2i-1}\cup V^{2i},E^{2i-1}\cup E^{2i}\cup
\left\{v_{j}^{(2i-1)}v_{j}^{(2i)}\colon\,1\leq j\leq
2n+1\right\}\right)$.
\end{center}

Clearly, $C^{i}$ is isomorphic to $C(2,2n+1)$ for $1\leq i\leq
\frac{m}{2}$. By Theorem \ref{mytheorem15}, $C(2,2n+1)\in
\mathfrak{N}$ and there exists an interval $3$-coloring $\alpha$ of
$C(2,2n+1)$. Now we define an edge-coloring $\beta$ of $C(m,2n+1)$.
First we color the edges of $C^{i}$ according to $\alpha$ for $1\leq
i\leq\frac{m}{2}$. Then we color the edges
$v_{j}^{(2i)}v_{j}^{(2i+1)}\in E_{j}$ with color $4$ for $1\leq
i\leq \frac{m}{2}-1, 1\leq j\leq 2n+1$. It is easy to see that
$\beta$ is an interval $4$-coloring of $C(m,2n+1)$. This shows that
$C(m,2n+1)\in \mathfrak{N}$ and $w(C(m,2n+1))\leq 4$. On the other
hand, $w(C(m,2n+1))\geq \Delta(C(m,2n+1))=4$; thus $w(C(m,2n+1))=4$
for even $m$.

Now assume that $m$ is odd.

First we show that $C(3,2n+1)$ has an interval $6$-coloring.

Define an edge-coloring $\gamma$ of $C(3,2n+1)$ as follows:
\begin{description}
\item[(1)]
$\gamma\left(v_{1}^{(1)}v_{1}^{(2)}\right)=6$ and for
$j=2,\ldots,2\left\lfloor\frac{n+1}{2}\right\rfloor$, let
$\gamma\left(v_{j}^{(1)}v_{j}^{(2)}\right)=4$;
\item[(2)]
$\gamma\left(v_{2\left\lfloor\frac{n+1}{2}\right\rfloor+1}^{(1)}v_{2\left\lfloor\frac{n+1}{2}\right\rfloor+1}^{(2)}\right)=2$
and for $j=2\left\lfloor\frac{n+1}{2}\right\rfloor+2,\ldots,2n+1$,
let $\gamma\left(v_{j}^{(1)}v_{j}^{(2)}\right)=3$;
\item[(3)]
$\gamma\left(v_{1}^{(2)}v_{1}^{(3)}\right)=3$ and for
$j=2,\ldots,2\left\lfloor\frac{n+1}{2}\right\rfloor$, let
$\gamma\left(v_{j}^{(2)}v_{j}^{(3)}\right)=2$;
\item[(4)]
for $j=2\left\lfloor\frac{n+1}{2}\right\rfloor+1,\ldots,2n+1$, let
$\gamma\left(v_{j}^{(2)}v_{j}^{(3)}\right)=1$;
\item[(5)]
$j=1,\ldots,\left\lfloor\frac{n+1}{2}\right\rfloor$, let
\begin{center}
$\gamma\left(v_{2j-1}^{(1)}v_{2j}^{(1)}\right)=\gamma\left(v_{2j-1}^{(2)}v_{2j}^{(2)}\right)=5$
and
$\gamma\left(v_{2j}^{(1)}v_{2j+1}^{(1)}\right)=\gamma\left(v_{2j}^{(2)}v_{2j+1}^{(2)}\right)=3$;
\end{center}
\item[(6)]
for $j=\left\lfloor\frac{n+1}{2}\right\rfloor+1,\ldots,n$, let
\begin{center}
$\gamma\left(v_{2j-1}^{(1)}v_{2j}^{(1)}\right)=\gamma\left(v_{2j-1}^{(2)}v_{2j}^{(2)}\right)=4$
and
$\gamma\left(v_{1}^{(1)}v_{2n+1}^{(1)}\right)=\gamma\left(v_{1}^{(2)}v_{2n+1}^{(2)}\right)=4$;
\end{center}
\item[(7)]
for $j=\left\lfloor\frac{n+1}{2}\right\rfloor+1,\ldots,n$, let
$\gamma\left(v_{2j}^{(1)}v_{2j+1}^{(1)}\right)=\gamma\left(v_{2j}^{(2)}v_{2j+1}^{(2)}\right)=2$;
\item[(8)]
for $j=1,\ldots,\left\lfloor\frac{n+1}{2}\right\rfloor$, let
$\gamma\left(v_{2j-1}^{(3)}v_{2j}^{(3)}\right)=1$ and
$\gamma\left(v_{2j}^{(3)}v_{2j+1}^{(3)}\right)=3$;
\item[(9)]
for $j=\left\lfloor\frac{n+1}{2}\right\rfloor+1,\ldots,n$, let
$\gamma\left(v_{2j-1}^{(3)}v_{2j}^{(3)}\right)=2$ and
$\gamma\left(v_{1}^{(3)}v_{2n+1}^{(3)}\right)=2$;
\item[(10)]
for $j=\left\lfloor\frac{n+1}{2}\right\rfloor+1,\ldots,n$, let
$\gamma\left(v_{2j}^{(3)}v_{2j+1}^{(3)}\right)=3$.
\end{description}

It is not difficult to see that $\gamma$ is an interval $6$-coloring
of $C(3,2n+1)$ for which $S(v_{j}^{(3)},\gamma)=[1,3]$ when $1\leq
j\leq 2n+1$.

Next we define an edge-coloring $\phi$ of $C(m,2n+1)$ as follows:
first we color the edges of the subgraph $C(3,2n+1)$ of $C(m,2n+1)$
according to $\gamma$. Secondly, we color the edges of the remaining
subgraph $C(m-3,2n+1)$ of $C(m,2n+1)$ according to $\beta$, and
finally, we color the edges $v_{j}^{(3)}v_{j}^{(4)}\in E_{j}$ with
color $4$ for $1\leq j\leq 2n+1$. It is easy to see that $\phi$ is
an interval $6$-coloring of $C(m,2n+1)$. This shows that
$C(m,2n+1)\in \mathfrak{N}$ and $w(C(m,2n+1))\leq 6$.

Now we prove that $w(C(m,2n+1))\geq 6$ for odd $m$.

Let $\psi$ be an interval $w(C(m,2n+1))$-coloring of $C(m,2n+1)$ and
$w(C(m,2n+1))\leq 5$. Consider the set
$S\left(v_{j}^{(i)},\psi\right)$ for $1\leq i\leq m, 1\leq j\leq
2n+1$. It is easy to see that if $d\left(v_{j}^{(i)}\right)=3$, then
$1\leq \min S\left(v_{j}^{(i)},\psi\right)\leq 3$, and if
$d\left(v_{j}^{(i)}\right)=4$, then $1\leq \min
S\left(v_{j}^{(i)},\psi\right)\leq 2$. Hence, $3\in
S\left(v_{j}^{(i)},\psi\right)$ for $1\leq i\leq m, 1\leq j\leq
2n+1$, but this implies that the edges with color $3$ form a perfect
matching in $C(m,2n+1)$, which contradicts the fact that $C(m,2n+1)$
does not have one. Thus, $w(C(m,2n+1))=6$ for odd $m$. ~$\square$
\end{proof}

Before we derive lower bounds for $W(C(2m,2n))$ and $W(C(2m,2n+1))$,
let us note that Lemma \ref{mylemma2}, Theorems \ref{mytheorem11}
and \ref{mytheorem16} imply the following:

\begin{corollary}
\label{mycorollary4} If $G\square H$ is planar and both factors have
at least $3$ vertices, then $G\square H\in \mathfrak{N}$ and
$w(G\square H)\leq 6$.
\end{corollary}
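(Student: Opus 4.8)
The plan is to read Corollary \ref{mycorollary4} as an immediate consequence of the three results it names, so the proof is essentially a case analysis based on the classification of planar Cartesian products. First I would invoke Lemma \ref{mylemma2}: since both $G$ and $H$ have at least $3$ vertices and $G\square H$ is planar, we have either $G\square H=G(m,n)$ (a two-dimensional grid, $m,n\geq 3$) or $G\square H=C(m,n)$ (a cylinder, $m\geq 3$ and $n\geq 3$). In each case I must exhibit membership in $\mathfrak{N}$ and the bound $w(G\square H)\leq 6$; in fact the stronger fact $w(G\square H)=\Delta(G\square H)$ holds in most subcases.

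For the grid case $G\square H=G(m,n)$ with $m,n\geq 3$, Theorem \ref{mytheorem11} gives $G(m,n)\in\mathfrak{N}$ and $w(G(m,n))=\Delta(G(m,n))$, and since $\Delta(G(m,n))=4$ when $m,n\geq 2$, we get $w(G\square H)=4\leq 6$. For the cylinder case $G\square H=C(m,n)$ with $m\geq 3$, $n\geq 3$, I would split according to the parity of $n$. If $n$ is even, write $n=2k$ with $k\geq 2$; then Theorem \ref{mytheorem11} applies to $C(m,2k)$, giving $C(m,2k)\in\mathfrak{N}$ and $w(C(m,2k))=\Delta(C(m,2k))=4\leq 6$. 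If $n$ is odd, write $n=2k+1$ with $k\geq 1$; then Theorem \ref{mytheorem16} applies directly and yields $C(m,2k+1)\in\mathfrak{N}$ together with $w(C(m,2k+1))=4$ if $m$ is even and $w(C(m,2k+1))=6$ if $m$ is odd, so in either case $w(C(m,2k+1))\leq 6$. Combining the cases gives the corollary.

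I expect the only real subtlety — and hence the main thing to double-check rather than a genuine obstacle — is confirming that the case decomposition in Lemma \ref{mylemma2} is exhaustive under the hypotheses and that the parameter ranges line up: the grid produced by Lemma \ref{mylemma2} is a \emph{two}-factor grid $G(m,n)$ with $m,n\geq 3$, which is covered by the $k$-dimensional statement of Theorem \ref{mytheorem11} with $k=2$; and for cylinders one must notice that $n\geq 3$ forces $k\geq 2$ in the even subcase $n=2k$, so Theorem \ref{mytheorem11} genuinely applies there, while the odd subcase $n=2k+1$ with $k\geq 1$ is exactly the hypothesis of Theorem \ref{mytheorem16}. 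No new construction is needed: the corollary is a bookkeeping consequence of Lemma \ref{mylemma2}, Theorem \ref{mytheorem11}, and Theorem \ref{mytheorem16}, and the bound $6$ is sharp precisely because of the odd-by-odd cylinder.
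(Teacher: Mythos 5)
Your proposal is correct and matches the paper's approach: the paper derives Corollary \ref{mycorollary4} exactly by combining Lemma \ref{mylemma2} with Theorems \ref{mytheorem11} and \ref{mytheorem16}, which is precisely your case analysis (grid, even cylinder, odd cylinder). The only nitpick is your parenthetical claim that $\Delta(G(m,n))=4$ for all $m,n\geq 2$ (false for $m=2$), but it is harmless since here $m,n\geq 3$.
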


\begin{theorem}
\label{mytheorem17} If $m\in\mathbb{N},n\geq 2$, then
$W(C(2m,2n))\geq 4m+2n-2$, and if $m,n\in\mathbb{N}$, then
$W(C(2m,2n+1))\geq 4m+2n-1$.
\end{theorem}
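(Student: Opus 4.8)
The plan is to prove each inequality by exhibiting an explicit edge-coloring and verifying, via Lemma~\ref{mylemma1}, that it is an interval coloring with the required number of colors — the method already used for Theorems~\ref{mytheorem9}, \ref{mytheorem10} and, most relevantly, \ref{mytheorem14}. Concretely, the coloring I would write down is a ``folded'' version of the diagonal-gradient grid coloring of Theorem~\ref{mytheorem14}. For $C(2m,2n)=P_{2m}\square C_{2n}$, write the vertices as $v_j^{(i)}$ with $1\le i\le 2m$ (the $P_{2m}$-coordinate) and $1\le j\le 2n$ (the $C_{2n}$-coordinate, read cyclically, so $v_{2n+1}^{(i)}=v_1^{(i)}$); the ``vertical'' edges $v_j^{(i)}v_j^{(i+1)}$ are the path-edges and the ``horizontal'' edges $v_j^{(i)}v_{j+1}^{(i)}$ are the cycle-edges. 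Put $h(j)=\min\{j-1,\,2n+1-j\}$, so that $h$ takes every value in $[0,n]$, equals $0$ only at $j=1$, equals $n$ only at $j=n+1$, and $|h(j+1)-h(j)|=1$ for every $j$ read cyclically.

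I would then colour the sub-cylinder $1\le j\le n+1$ exactly as Theorem~\ref{mytheorem14} colours the grid $G(2m,n+1)$ — vertical edges get colours roughly $2i+2h(j)$, horizontal edges colours roughly $2i+h(j)+h(j+1)$ — and colour the sub-cylinder $n+1\le j\le 2n$ by the mirror image of the same rule (colours increasing with $i$, decreasing with $j$), the two pieces being designed to agree along the fold-column $j=n+1$ and to be joined along the matching between columns $2n$ and $1$. The span of the colours used is then $2\big((2m-1)+n\big)=4m+2n-2$, so that, after the same $\pm1$ corrections that appear in Theorem~\ref{mytheorem14} at the two boundary rows $i=1$ and $i=2m$ (the degree-$3$ vertices, coming from the ends of $P_{2m}$) and at the fold- and junction-columns, exactly the colours $1,\dots,4m+2n-2$ occur. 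The verification follows Lemma~\ref{mylemma1}: the graph is connected; one exhibits an edge coloured $1$ (a vertical edge near $i=1$, $j=1$, as in Theorem~\ref{mytheorem14}) and an edge coloured $4m+2n-2$ (near the peak column $j=n+1$ and the boundary row $i=2m$); and one checks that $S\big(v_j^{(i)},\alpha\big)$ is an interval for every vertex, a finite case analysis organised by the column region ($2\le j\le n$, $n+2\le j\le 2n$, the fold $j=n+1$, the junction $j\in\{1,2n\}$) crossed with the row ($i=1$, $i=2m$, or $2\le i\le 2m-1$). Hence $W(C(2m,2n))\ge 4m+2n-2$.

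For the odd cylinder $C(2m,2n+1)=P_{2m}\square C_{2n+1}$ one cannot fold $C_{2n+1}$ into two equal halves: around an odd cycle the levels cannot all differ by $1$. Instead I would take $h(j)=j-1$ for $1\le j\le n+1$ and $h(j)=2n+2-j$ for $n+2\le j\le 2n+1$, so that the two columns $j=n+1$ and $j=n+2$ both sit at the top level $n$ and are joined by a horizontal matching forced to lie one colour-level above the rest of the band; this single extra level is exactly what raises the count to $4m+2n-1$. Otherwise the coloring and its verification via Lemma~\ref{mylemma1} run as in the even case.

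The main obstacle is precisely the bookkeeping of the $\pm1$ corrections. Just as in Theorem~\ref{mytheorem14} the clean formula ``colour $=2(i+j)$'' has to be broken into four cases, here the clean rule ``$2i+2h(j)$'' must be adjusted near the two boundary rows, near the fold column, and near the wrap-around junction, and all these adjustments must be mutually consistent at the ``corner'' vertices where a boundary row meets the fold or the junction; one must also check that the colour set is exactly $[1,4m+2n-2]$ (respectively $[1,4m+2n-1]$) with no value skipped. The one genuinely new point over the even case is the two-top-columns fold in $C_{2n+1}$, which has to be arranged so that the extra matching neither repeats a colour nor leaves a gap at the vertices it touches.
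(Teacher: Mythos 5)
Your overall strategy is the same as the paper's: the paper also proves both bounds by writing down an explicit ``folded diagonal gradient'' coloring of the cylinder (colors growing by roughly $2$ per step along the $P_{2m}$-direction, rising and then falling around the cycle) and checking it directly. So the approach is right; the problem is that your proposal stops exactly where the actual proof begins. The clean rule you start from is not merely in need of $\pm 1$ boundary corrections: with the symmetric fold $h(j)=\min\{j-1,\,2n+1-j\}$ and horizontal color $\approx 2i+h(j)+h(j+1)$, the two cycle-edges incident to \emph{every} vertex of the fold column $j=n+1$ receive the same color (both neighbors have level $n-1$), and likewise at every vertex of the junction column $j=1$ (both neighbors have level $1$); the same collision reappears at $j=1$ in your odd-cycle scheme. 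This is a proper-coloring violation along two entire columns, not a local adjustment at the rows $i=1,2m$, and it cannot be repaired by the kind of corrections used in Theorem~\ref{mytheorem14}. What is actually needed is a systematic desymmetrization of the two halves of the cycle — in the paper's coloring the descending half is shifted by one (compare the formulas $4i+2j-4$, $4i+2j-5$, $4i+2j-3$ on the ascending half with $4i-2j+4n-1$, $4i-2j+4n$, $4i-2j+4n+2$ on the descending half, plus dedicated values $4i-1$ and $4i$ on the wrap-around column) — and it is precisely this offset, together with the check that every vertex set $S\bigl(v_j^{(i)},\alpha\bigr)$ is an interval and that all colors $1,\ldots,4m+2n-2$ (resp.\ $1,\ldots,4m+2n-1$) actually occur, that constitutes the proof.

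Since you explicitly defer ``the bookkeeping of the $\pm1$ corrections'' and their mutual consistency at the corners, and since the base rule you propose is inconsistent as stated, the theorem is not yet proved by your text: an existence claim of this kind is established only once a concrete coloring is written down and verified (the paper does this with eight explicit formulas for each of the two cylinders). Your instinct for the odd case — two adjacent top columns at level $n$ joined by a matching one level higher, accounting for the extra color $4m+2n-1$ — does match what the paper's second coloring does, so the plan is salvageable; but you must either exhibit the formulas (with the half-shift that kills the fold/junction collisions) or give an argument replacing the verification, neither of which is present.
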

\begin{proof} For the proof of the theorem, it suffices to construct
edge-colorings that satisfies the specified conditions. Let
$V(C(2m,2n))=\left\{v_{j}^{(i)}\colon\,1\leq i\leq 2m,1\leq j\leq
2n\right\}$ and $V(C(2m,2n+1))=\left\{u_{j}^{(i)}\colon\,1\leq i\leq
2m,1\leq j\leq 2n+1\right\}$. Also, let
$E(C(2m,2n))=\bigcup_{i=1}^{2m}E^{i}\cup \bigcup_{j=1}^{2n}E_{j}$
and $E(C(2m,2n+1))=\bigcup_{i=1}^{2m}\overline{E}^{i}\cup
\bigcup_{j=1}^{2n+1}\overline{E}_{j}$ , where
\begin{center}
$E^{i}=\left\{v_{j}^{(i)}v_{j+1}^{(i)}\colon\,1\leq j\leq
2n-1,\right\}\cup \left\{v_{1}^{(i)}v_{2n}^{(i)}\right\}$,
$E_{j}=\left\{v_{j}^{(i)}v_{j}^{(i+1)}\colon\,1\leq i\leq
2m-1\right\}$ and
$\overline{E}^{i}=\left\{u_{j}^{(i)}u_{j+1}^{(i)}\colon\,1\leq j\leq
2n,\right\}\cup \left\{u_{1}^{(i)}u_{2n+1}^{(i)}\right\}$,
$\overline{E}_{j}=\left\{u_{j}^{(i)}u_{j}^{(i+1)}\colon\,1\leq i\leq
2m-1\right\}$.
\end{center}

First we construct an interval $(4m+2n-2)$-coloring of $C(2m,2n)$
when $m\in\mathbb{N},n\geq 2$.

Define an edge-coloring $\alpha$ of $C(2m,2n)$ as follows:\\
\begin{description}
\item[(1)] for $i=1,\ldots,m$, $j=1,\ldots,n$, let
\begin{center}
$\alpha\left(v_{j}^{(2i-1)}v_{j+1}^{(2i-1)}\right)=\alpha\left(v_{j}^{(2i)}v_{j+1}^{(2i)}\right)=4i+2j-4$;
\end{center}
\item[(2)] for $i=1,\ldots,m$, $j=n+1,\ldots,2n-1$, let
\begin{center}
$\alpha\left(v_{j}^{(2i-1)}v_{j+1}^{(2i-1)}\right)=\alpha\left(v_{j}^{(2i)}v_{j+1}^{(2i)}\right)=4i-2j+4n-1$;
\end{center}
\item[(3)] for $i=1,\ldots,m$, let
\begin{center}
$\alpha\left(v_{1}^{(2i-1)}v_{2n}^{(2i-1)}\right)=\alpha\left(v_{1}^{(2i)}v_{2n}^{(2i)}\right)=4i-1$;
\end{center}
\item[(4)] for $i=1,\ldots,m$, $j=1,\ldots,n$, let
\begin{center}
$\alpha\left(v_{j}^{(2i-1)}v_{j}^{(2i)}\right)=4i+2j-5$;
\end{center}
\item[(5)] for $i=1,\ldots,m$, $j=n+1,\ldots,2n$, let
\begin{center}
$\alpha\left(v_{j}^{(2i-1)}v_{j}^{(2i)}\right)=4i-2j+4n$;
\end{center}
\item[(6)] for $i=1,\ldots,m-1$, $j=2,\ldots,n+1$, let
\begin{center}
$\alpha\left(v_{j}^{(2i)}v_{j}^{(2i+1)}\right)=4i+2j-3$;
\end{center}
\item[(7)] for $i=1,\ldots,m-1$, $j=n+2,\ldots,2n$, let
\begin{center}
$\alpha\left(v_{j}^{(2i)}v_{j}^{(2i+1)}\right)=4i-2j+4n+2$;
\end{center}
\item[(8)] for $i=1,\ldots,m-1$, let
\begin{center}
$\alpha\left(v_{1}^{(2i)}v_{1}^{(2i+1)}\right)=4i$.
\end{center}
\end{description}

Next we construct an interval $(4m+2n-1)$-coloring of $C(2m,2n+1)$
when $m,n\in\mathbb{N}$.

Define an edge-coloring $\beta$ of $C(2m,2n+1)$ as follows:\\
\begin{description}
\item[(1)] for $i=1,\ldots,m$, $j=1,\ldots,n+1$, let
\begin{center}
$\beta\left(u_{j}^{(2i-1)}u_{j+1}^{(2i-1)}\right)=\beta\left(u_{j}^{(2i)}u_{j+1}^{(2i)}\right)=4i+2j-4$;
\end{center}
\item[(2)] for $i=1,\ldots,m$, $j=n+2,\ldots,2n$, let
\begin{center}
$\beta\left(u_{j}^{(2i-1)}u_{j+1}^{(2i-1)}\right)=\beta\left(u_{j}^{(2i)}u_{j+1}^{(2i)}\right)=4i-2j+4n+1$;
\end{center}
\item[(3)] for $i=1,\ldots,m$, let
\begin{center}
$\beta\left(u_{1}^{(2i-1)}u_{2n+1}^{(2i-1)}\right)=\beta\left(u_{1}^{(2i)}u_{2n+1}^{(2i)}\right)=4i-1$;
\end{center}
\item[(4)] for $i=1,\ldots,m$, $j=1,\ldots,n+2$, let
\begin{center}
$\beta\left(u_{j}^{(2i-1)}u_{j}^{(2i)}\right)=4i+2j-5$;
\end{center}
\item[(5)] for $i=1,\ldots,m$, $j=n+3,\ldots,2n+1$, let
\begin{center}
$\beta\left(u_{j}^{(2i-1)}u_{j}^{(2i)}\right)=4i-2j+4n+2$;
\end{center}
\item[(6)] for $i=1,\ldots,m-1$, $j=2,\ldots,n+1$, let
\begin{center}
$\beta\left(u_{j}^{(2i)}u_{j}^{(2i+1)}\right)=4i+2j-3$;
\end{center}
\item[(7)] for $i=1,\ldots,m-1$, $j=n+2,\ldots,2n+1$, let
\begin{center}
$\beta\left(u_{j}^{(2i)}u_{j}^{(2i+1)}\right)=4i-2j+4n+4$;
\end{center}
\item[(8)] for $i=1,\ldots,m-1$, let
\begin{center}
$\beta\left(u_{1}^{(2i)}u_{1}^{(2i+1)}\right)=4i$.
\end{center}
\end{description}

It is straightforward to check that $\alpha$ is an interval
$(4m+2n-2)$-coloring of $C(2m,2n)$ when $m\in\mathbb{N},n\geq 2$,
and $\beta$ is an interval $(4m+2n-1)$-coloring of $C(2m,2n+1)$ when
$m,n\in\mathbb{N}$.~$\square$
\end{proof}

Note that the lower bound in Theorem \ref{mytheorem17} is not so far
from the upper bound for $W\left(C(m,n)\right)$. Indeed, since
$C(2m,2n)$ is bipartite, $3\leq \Delta\left(C(2m,2n)\right)\leq 4$
and $\mathrm{diam}\left(C(2m,2n)\right)=2m+n-1$, by Theorem
\ref{mytheorem3}, we have $W\left(C(2m,2n)\right)\leq 3(2m+n-1)+1$.
Similarly, since $3\leq \Delta\left(C(2m,2n+1)\right)\leq 4$ and
$\mathrm{diam}\left(C(2m,2n+1)\right)=2m+n-1$, by Theorem
\ref{mytheorem2}, we have $W\left(C(2m,2n+1)\right)\leq 3(2m+n)+1$.
Next we would like to compare obtained lower bounds for $W(C(m,n))$.
If $m$ is even and $m<n$, then the lower bound in Theorem
\ref{mytheorem17} is better than in Theorem \ref{mytheorem12}, if
$m$ is even and $m>n$, then the lower bound in Theorem
\ref{mytheorem12} is better than in Theorem \ref{mytheorem17}, and
if $m$ is even and $m=n$, then we obtain the
same lower bound in both theorems.\\

In the following we consider tori. In \cite{b22}, Petrosyan proved
that the torus $T(m,n)\in\mathfrak{N}$ if and only if $mn$ is even.
Since $T(m,n)$ is $4$-regular, by Theorem \ref{mytheorem1}, we
obtain that $w(T(m,n))=4$ when $mn$ is even. Now we derive a new
lower bound for $W(T(m,n))$ when $mn$ is even.

\begin{theorem}
\label{mytheorem18} For any $m,n \geq 2$, we have $W(T(2m,2n))\geq
\max\{3m+n+2,3n+m+2\}$, and for any $m\geq 2$, $n\in\mathbb{N}$, we
have
\begin{center}
$W\left(T(2m,2n+1)\right)\geq \left\{
\begin{tabular}{ll}
$2m+2n+2$, & if $m$ is odd,\\
$2m+2n+3$, & if $m$ is even.\\
\end{tabular}%
\right.$
\end{center}
\end{theorem}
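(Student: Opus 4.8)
I would prove Theorem~\ref{mytheorem18} by explicitly constructing interval colorings, in the same spirit as the proofs of Theorems~\ref{mytheorem14} and~\ref{mytheorem17}. The key observation is that $T(2m,2n)=C_{2m}\square C_{2n}$, and more usefully $T(2m,2n)=C_{2n}\square C_{2m}$ is obtained from the cylinder $C(2m,2n)=P_{2m}\square C_{2n}$ by adding one more layer of ``vertical'' edges $E_{j}$ joining the copy $V^{2m}$ back to $V^{1}$; symmetrically it is obtained from $C(2n,2m)$. So the natural strategy for the first bound is to start from the interval $(4m+2n-2)$-coloring $\alpha$ of $C(2m,2n)$ constructed in Theorem~\ref{mytheorem17} and modify it near the ``seam'' so that the returning edges $v_{j}^{(2m)}v_{j}^{(1)}$ can be legitimately colored while pushing the palette up by a controlled amount. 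Concretely, one wants the final coloring to use $4m+2n+2$ colors: adding the wrap-around cycle in the $C_{2m}$ direction should cost roughly the same $+3$ that closing a path into a cycle costs in Theorem~\ref{mytheorem17} versus Theorem~\ref{mytheorem14} (compare $2(m+n-3)$ for $G(m,n)$ with $4m+2n-2$ for $C(2m,2n)$ in the $m$-direction). By the $C_{2m}\leftrightarrow C_{2n}$ symmetry of the torus, the same construction run with the roles of $m$ and $n$ interchanged gives $3n+m+2$, whence $W(T(2m,2n))\ge\max\{3m+n+2,3n+m+2\}$.

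\textbf{Carrying it out.} I would set $V(T(2m,2n))=\{v_{j}^{(i)}:1\le i\le 2m,\ 1\le j\le 2n\}$ with $E^{i}$ the horizontal cycle on layer $i$, $E_{j}$ now the vertical cycle $\{v_{j}^{(i)}v_{j}^{(i+1)}:1\le i\le 2m-1\}\cup\{v_{j}^{(2m)}v_{j}^{(1)}\}$. Then I would write down a closed-form coloring $\alpha$ by a case analysis on the parity of $i$ and on whether $j\le n$ or $j>n$ (so that along each row the colors rise and then fall, forming an interval at every vertex), exactly mirroring items (1)--(8) in the proof of Theorem~\ref{mytheorem17} but with the index arithmetic arranged $2m$-cyclically rather than along a path; the extra layer of vertical edges closing the $C_{2m}$ gets a color that is two more than the current maximum at the relevant vertex, which is what produces the ``$+2$'' over the bound $4m+2n-2$ for the cylinder. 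One then checks, vertex by vertex, that $S(v_{j}^{(i)},\alpha)$ is an interval — this is the routine but lengthy part — and that colors $1$ and $4m+2n+2$ both occur, so $\alpha$ is an interval $(4m+2n+2)$-coloring by Lemma~\ref{mylemma1}. Swapping $m\leftrightarrow n$ handles $3n+m+2$.

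\textbf{The odd case.} For $T(2m,2n+1)$ I would argue differently, by ``stacking'' interval-colored cylinders of odd circumference, reusing the machinery of Theorem~\ref{mytheorem16}. Note $T(2m,2n+1)$ contains the cylinder $C(2m,2n+1)$, and adding the last vertical cycle $\{v_{j}^{(2m)}v_{j}^{(1)}\}$ turns it into the torus. When $m$ is odd I would split the $2m$ layers into cylinders of the form $C(3,2n+1)$ plus $C(2,2n+1)$ blocks glued as in the proof of Theorem~\ref{mytheorem16}, choosing the colorings $\gamma,\beta$ there so that the two boundary rows of the whole stack carry the \emph{same} palette $[1,3]$ (the proof of Theorem~\ref{mytheorem16} already arranges $S(v_{j}^{(3)},\gamma)=[1,3]$); then the wrap-around vertical edges can all be colored with a single new color, and a parity bookkeeping of how the palette climbs through the $C(2,2n+1)$ blocks yields $2m+2n+2$. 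When $m$ is even one uses only $C(2,2n+1)$ blocks and the count comes out one larger, $2m+2n+3$, because there is no odd block to absorb the discrepancy; I would track the exact maxima through Theorem~\ref{mytheorem15}'s coloring to pin down the constant.

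\textbf{Expected main obstacle.} The genuinely delicate point is closing the torus: in a cycle every vertex has two incident ``vertical'' edges, so when we wrap $V^{2m}$ back to $V^{1}$ we must ensure that both the old color at $v_{j}^{(1)}$ (from the $1$--$2$ layer edge and the horizontal edges) and the new wrap-around color still form a single interval — there is no ``free'' end of a path to dump the extra color onto, unlike in Theorems~\ref{mytheorem14} and~\ref{mytheorem17}. Getting the index arithmetic of $\alpha$ to make the first and last layers ``palette-compatible'' modulo the requisite shift, simultaneously for all $j$ and for both parities of $j\lessgtr n$, is where essentially all the work lies; the $+2$ (resp.\ $+3$) in the bound is precisely the minimal slack that makes this seam closable. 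Everything else — verifying each $S(v_{j}^{(i)},\gamma)$ is an interval, and that colors $1$ and the maximum appear — is mechanical and invokes Lemma~\ref{mylemma1}.
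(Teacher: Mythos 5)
Your plan for the first inequality cannot work as stated. You aim to close the seam of the cylinder coloring of Theorem~\ref{mytheorem17} and end up with roughly $4m+2n+2$ colors on $T(2m,2n)$, but $T(2m,2n)$ is bipartite with $\Delta=4$ and diameter $m+n$, so Theorem~\ref{mytheorem3} gives $W(T(2m,2n))\leq 3(m+n)+1$; whenever $m\geq n$ your target exceeds this bound, so no such coloring exists. The underlying reason is exactly the seam issue you flag as ``delicate'': in the coloring of Theorem~\ref{mytheorem17} the palettes climb monotonically with the layer index $i$ (the colors on layer $i$ are roughly $4i$ plus a bounded amount), so the palettes at layers $1$ and $2m$ differ by about $4m$, and no single color on a wrap-around edge $v_{j}^{(2m)}v_{j}^{(1)}$ can lie in both endpoint intervals. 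A torus coloring must rise along half of the $C_{2m}$ direction and descend along the other half, which is precisely why the torus bound is of order $3m+n$ rather than $4m+2n$. In fact the paper does not construct anything for this half: the inequality $W(T(2m,2n))\geq \max\{3m+n+2,3n+m+2\}$ is an immediate consequence of Theorem~\ref{mytheorem10} applied with $G=C_{2m}$ (so $r=2$, $W(C_{2m})=m+1$), which gives $W(C_{2m}\square C_{2n})\geq (m+1)+(n+1)+2n=3n+m+2$, and symmetrically $3m+n+2$; you missed this shortcut.

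The sketch for $T(2m,2n+1)$ is also not workable as written. The colorings of Theorems~\ref{mytheorem15} and~\ref{mytheorem16} that you propose to stack are \emph{minimum} colorings (using $3$, $4$ or $6$ colors); gluing such blocks with ``a single new color'' per seam produces a coloring with a bounded total number of colors, nowhere near $2m+2n+2$ or $2m+2n+3$, while if you instead shift the palettes upward from block to block you run into the same wrap-around obstruction as above. What is actually required --- and what the paper's explicit formulas do --- is a coloring whose palettes increase by roughly $4$ per pair of layers up to the layers near $i=m$, where they reach about $2m+2n+2$, and then decrease symmetrically back down to the starting palette (note the paired indices $2i$ and $2m-2i$ in items (5)--(11) of the paper's construction), with the parity of $m$ responsible for the difference between $2m+2n+2$ and $2m+2n+3$. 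Your ``parity bookkeeping'' gestures at this but supplies no mechanism, so the essential content of the proof is missing in both halves.
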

\begin{proof} First note that the lower bound for $W(T(2m,2n))$ $(m,n \geq
2)$ follows from Theorem \ref{mytheorem10}. For the proof of a
second part of the theorem, it suffices to construct an
edge-coloring of $T(2m,2n+1)$ that satisfies the specified
conditions.

Let $V(T(2m,2n+1))=\left\{v_{j}^{(i)}\colon\,1\leq i\leq 2m,1\leq
j\leq 2n+1\right\}$ and $E(T(2m,2n+1))=\bigcup_{i=1}^{2m}E^{i}\cup
\bigcup_{j=1}^{2n+1}E_{j}$, where
\begin{center}
$E^{i}=\left\{v_{j}^{(i)}v_{j+1}^{(i)}\colon\,1\leq j\leq
2n\right\}\cup \left\{v_{1}^{(i)}v_{2n+1}^{(i)}\right\}$,
$E_{j}=\left\{v_{j}^{(i)}v_{j}^{(i+1)}\colon\,1\leq i\leq
2m-1\right\}\cup \left\{v_{j}^{(1)}v_{j}^{(2m)}\right\}$.
\end{center}

Define an edge-coloring $\alpha$ of $T(2m,2n+1)$ as follows:\\
\begin{description}
\item[(1)] for $j=1,\ldots,n+1$, let
\begin{center}
$\alpha\left(v_{j}^{(1)}v_{j+1}^{(1)}\right)=\alpha\left(v_{j}^{(2m)}v_{j+1}^{(2m)}\right)=2j$;
\end{center}
\item[(2)] for $j=n+2,\ldots,2n$, let
\begin{center}
$\alpha\left(v_{j}^{(1)}v_{j+1}^{(1)}\right)=\alpha\left(v_{j}^{(2m)}v_{j+1}^{(2m)}\right)=2(2n+1-j)+3$
and\\
$\alpha\left(v_{1}^{(1)}v_{2n+1}^{(1)}\right)=\alpha\left(v_{1}^{(2m)}v_{2n+1}^{(2m)}\right)=3$;
\end{center}
\item[(3)] for $j=1,\ldots,n+2$, let
\begin{center}
$\alpha\left(v_{j}^{(1)}v_{j}^{(2m)}\right)=2j-1$;
\end{center}
\item[(4)] for $j=n+3,\ldots,2n+1$, let
\begin{center}
$\alpha\left(v_{j}^{(1)}v_{j}^{(2m)}\right)=2(2n+3-j)$;
\end{center}
\item[(5)] for $i=1,\ldots,\left\lfloor\frac{m}{2}\right\rfloor$, $j=1,\ldots,n+1$, let
\begin{center}
$\alpha\left(v_{j}^{(2i)}v_{j+1}^{(2i)}\right)=\alpha\left(v_{j}^{(2i+1)}v_{j+1}^{(2i+1)}\right)=
\alpha\left(v_{j}^{(2m-2i)}v_{j+1}^{(2m-2i)}\right)=\alpha\left(v_{j}^{(2m-2i+1)}v_{j+1}^{(2m-2i+1)}\right)=4i+2j$;
\end{center}
\item[(6)] for $i=1,\ldots,\left\lfloor\frac{m}{2}\right\rfloor$, $j=n+2,\ldots,2n$, let
\begin{center}
$\alpha\left(v_{j}^{(2i)}v_{j+1}^{(2i)}\right)=\alpha\left(v_{j}^{(2i+1)}v_{j+1}^{(2i+1)}\right)=
\alpha\left(v_{j}^{(2m-2i)}v_{j+1}^{(2m-2i)}\right)=\alpha\left(v_{j}^{(2m-2i+1)}v_{j+1}^{(2m-2i+1)}\right)=4i+2(2n+1-j)+3$
and\\
$\alpha\left(v_{1}^{(2i)}v_{2n+1}^{(2i)}\right)=\alpha\left(v_{1}^{(2i+1)}v_{2n+1}^{(2i+1)}\right)=\alpha\left(v_{1}^{(2m-2i)}v_{2n+1}^{(2m-2i)}\right)=\alpha\left(v_{1}^{(2m-2i+1)}v_{2n+1}^{(2m-2i+1)}\right)=4i+3$;
\end{center}
\item[(7)] for $i=1,\ldots,\left\lceil\frac{m}{2}\right\rceil$, $j=2,\ldots,n+1$, let
\begin{center}
$\alpha\left(v_{j}^{(2i-1)}v_{j}^{(2i)}\right)=\alpha\left(v_{j}^{(2m-2i+1)}v_{j}^{(2m-2i+2)}\right)=4i+2j-3$;
\end{center}
\item[(8)] for $i=1,\ldots,\left\lceil\frac{m}{2}\right\rceil$, $j=n+2,\ldots,2n+1$, let
\begin{center}
$\alpha\left(v_{j}^{(2i-1)}v_{j}^{(2i)}\right)=\alpha\left(v_{j}^{(2m-2i+1)}v_{j}^{(2m-2i+2)}\right)=4(n+1+i)-2j$;
\end{center}
\item[(9)] for $i=1,\ldots,\left\lceil\frac{m}{2}\right\rceil$, let
\begin{center}
$\alpha\left(v_{1}^{(2i-1)}v_{1}^{(2i)}\right)=\alpha\left(v_{1}^{(2m-2i+1)}v_{1}^{(2m-2i+2)}\right)=4i$;
\end{center}
\item[(10)] for $i=1,\ldots,\left\lfloor\frac{m}{2}\right\rfloor$, $j=1,\ldots,n+2$, let
\begin{center}
$\alpha\left(v_{j}^{(2i)}v_{j}^{(2i+1)}\right)=\alpha\left(v_{j}^{(2m-2i)}v_{j}^{(2m-2i+1)}\right)=4i+2j-1$;
\end{center}
\item[(11)] for $i=1,\ldots,\left\lfloor\frac{m}{2}\right\rfloor$, $j=n+3,\ldots,2n+1$, let
\begin{center}
$\alpha\left(v_{j}^{(2i)}v_{j}^{(2i+1)}\right)=\alpha\left(v_{j}^{(2m-2i)}v_{j}^{(2m-2i+1)}\right)=4i+2(2n+3-j)$.
\end{center}
\end{description}
It is not difficult to check that $\alpha$ is an interval
$(2m+2n+3)$-coloring of $T(2m,2n+1)$ when $m$ is even, and $\alpha$
is an interval $(2m+2n+2)$-coloring of $T(2m,2n+1)$ when $m$ is odd.
~$\square$
\end{proof}

From Theorems \ref{mytheorem1}, \ref{mytheorem11} and
\ref{mytheorem18}, we have:

\begin{corollary}
\label{mycorollary5} If $G=T(2m,2n)$ $(m,n\geq 2)$ and $4\leq t\leq
\max\{3m+n+2,3n+m+2\}$, then $G$ has an interval $t$-coloring. Also,
If $H=T(2m,2n+1)$ $(m\geq 2, n\in\mathbb{N})$, $m$ is odd and $4\leq
t\leq 2m+2n+2$, then $H$ has an interval $t$-coloring, and if
$H=T(2m,2n+1)$ $(m\geq 2, n\in\mathbb{N})$, $m$ is even and $4\leq
t\leq 2m+2n+3$, then $H$ has an interval $t$-coloring.
\end{corollary}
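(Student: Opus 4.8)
The plan is to treat the two assertions separately; only the second requires real work. For the even torus $T(2m,2n)=C_{2m}\square C_{2n}$ with $m,n\ge 2$, note that $C_{2m}$ is a $2$-regular graph with $C_{2m}\in\mathfrak{N}$ and $W(C_{2m})=m+1$. Applying Theorem~\ref{mytheorem10} with $G=C_{2m}$ and $r=2$ gives
\[
W(C_{2m}\square C_{2n})\ge W(C_{2m})+W(C_{2n})+2n=(m+1)+(n+1)+2n=3n+m+2,
\]
and applying the same theorem to the isomorphic graph $C_{2n}\square C_{2m}$ (with $G=C_{2n}$, $r=2$) gives $W(T(2m,2n))\ge 3m+n+2$; taking the larger of the two bounds yields $W(T(2m,2n))\ge\max\{3m+n+2,3n+m+2\}$.

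For the odd torus $T(2m,2n+1)=C_{2m}\square C_{2n+1}$ the factor $C_{2n+1}$ is not interval colorable, so Theorem~\ref{mytheorem10} is unavailable and I would instead exhibit an explicit interval $t$-coloring $\alpha$ with $t=2m+2n+2$ if $m$ is odd and $t=2m+2n+3$ if $m$ is even. First I would fix the layered description of the torus as $2m$ copies $G^1,\dots,G^{2m}$ of the odd cycle $C_{2n+1}$ (the horizontal edge sets $E^i$) joined cyclically by the vertical sets $E_j$. The guiding idea is the one that makes even cycles interval colorable: around each horizontal cycle the colors should increase as the column index $j$ runs $1\to n+1$ and decrease as $j$ runs $n+1\to 2n+1$, with the wrap-around edge $v_1^{(i)}v_{2n+1}^{(i)}$ closing the zigzag. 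Since $C_{2n+1}$ is odd this cannot be done layer by layer, so the construction pairs layer $2i$ with $2i+1$, and symmetrically layer $2m-2i$ with $2m-2i+1$, giving each paired pair the same horizontal colors, uses layers $1$ and $2m$ as anchors, and then chooses the vertical-edge colors (a shift depending on $i$ and on whether $j\le n+2$ or $j\ge n+3$) so that every vertex sees a block of consecutive integers. This is exactly what the piecewise formulas (1)--(11) encode, with $\lfloor m/2\rfloor$ and $\lceil m/2\rceil$ controlling where the two symmetric halves meet.

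The verification has two stages. First, one checks that $S(v_j^{(i)},\alpha)$ is a set of four consecutive integers for every vertex (the torus is $4$-regular). I would organize this by a case split on the layer index $i$ --- $i=1$, $i=2m$, and the ``upper'' and ``lower'' reflected families of layers, split further into even and odd indices to match the paired layers --- and, inside each case, by a split on the column index at the turning points, i.e.\ $1\le j\le n+1$ versus $n+2\le j\le 2n+1$, together with the wrap-around edge. In each of the finitely many cases one reads the four incident colors off (1)--(11) and checks they are consecutive. Second, one checks that color $1$ occurs (e.g.\ $\alpha(v_1^{(1)}v_1^{(2m)})=1$ by (3)), that color $t$ occurs (on a vertical edge of a middle layer, from one of the blocks (8)--(11) with $i$ maximal), and that no color exceeds $t$. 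Then Lemma~\ref{mylemma1} gives that $\alpha$ is an interval $t$-coloring, whence $W(T(2m,2n+1))\ge t$.

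The hard part is purely combinatorial bookkeeping rather than anything conceptual. One must ensure that the linear-in-$(i,j)$ color formulas in the blocks (5)--(6), (7)--(9) and (10)--(11) agree at the seams --- the layers where the reflected families meet, which behave differently according to the parity of $m$ through $\lfloor m/2\rfloor$ versus $\lceil m/2\rceil$, and the columns where the ascending and descending runs join --- and that the wrap-around edge of each layer receives a color compatible with both endpoints $v_1^{(i)}$ and $v_{2n+1}^{(i)}$. Making the two symmetric halves dovetail cleanly in the middle for both parities of $m$, without a gap or a clash, is where all the care goes; once the formulas are set down correctly, each vertex check reduces to a one-line arithmetic verification.
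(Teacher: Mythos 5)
The statement you are asked to prove is not a lower bound on $W$: it asserts that the torus has an interval $t$-coloring for \emph{every} $t$ in the stated range, $4\le t\le \max\{3m+n+2,3n+m+2\}$ for $T(2m,2n)$ and $4\le t\le 2m+2n+2$ (resp.\ $2m+2n+3$) for $T(2m,2n+1)$. Your proposal only produces colorings at the top of the range: the even case is exactly the bound $W(T(2m,2n))\ge\max\{3m+n+2,3n+m+2\}$ obtained from Theorem~\ref{mytheorem10} (which is how the paper proves that part of Theorem~\ref{mytheorem18}), and the odd case is a re-sketch of the explicit coloring (1)--(11) from Theorem~\ref{mytheorem18}, giving $W(T(2m,2n+1))\ge 2m+2n+2$ or $2m+2n+3$. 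Nowhere do you produce, or even mention, interval $t$-colorings for the intermediate values of $t$, so as written the argument covers only $t=4$ (implicitly, via Theorem~\ref{mytheorem11}) and $t$ equal to the stated upper bound.

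The missing idea is the regularity of the torus. Since $T(2m,2n)$ and $T(2m,2n+1)$ are $4$-regular and interval colorable, Theorem~\ref{mytheorem1}(2) guarantees an interval $t$-coloring for every $t$ with $w\le t\le W$; and $w=4$ because $\chi'=\Delta=4$ (Theorem~\ref{mytheorem1}(1), Theorem~\ref{mytheorem11}, and the discussion preceding Theorem~\ref{mytheorem18}). Combining this gap-free spectrum property with the lower bounds on $W$ from Theorem~\ref{mytheorem18} gives the corollary immediately; this is the paper's (one-line) proof. Your re-derivation of the Theorem~\ref{mytheorem18} bounds is fine as far as it goes (though the odd-torus verification is left entirely as ``bookkeeping''), but without invoking Theorem~\ref{mytheorem1}(2) the conclusion about all $t$ in the interval does not follow.
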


Let us note that the lower bound in Theorem \ref{mytheorem18} is not
so far from the upper bound for $W\left(T(m,n)\right)$. Indeed,
since $T(2m,2n)$ is bipartite, $\Delta\left(T(2m,2n)\right)=4$ and
$\mathrm{diam}\left(C(2m,2n)\right)=m+n$, by Theorem
\ref{mytheorem3}, we have $W\left(T(2m,2n)\right)\leq 3(m+n)+1$.
Similarly, since $\Delta\left(T(2m,2n+1)\right)=4$ and
$\mathrm{diam}\left(T(2m,2n+1)\right)=m+n$, by Theorem
\ref{mytheorem2}, we have $W\left(T(2m,2n+1)\right)\leq 3(m+n+1)+1$.
\bigskip

\section{$N$-dimensional cubes}\

It is well-known that the $n$-dimensional cube $Q_{n}$ is the
Cartesian product of $n$ copies of $K_{2}$. In \cite{b21}, Petrosyan
investigated interval colorings of $n$-dimensional cubes and proved
that $w\left(Q_{n}\right)=n$ and
$W\left(Q_{n}\right)\geq\frac{n(n+1)}{2}$ for any $n\in\mathbb{N}$.
In the same paper he also conjectured that $W\left(Q_{n}\right)
=\frac{n(n+1)}{2}$ for any $n\in\mathbb{N}$. Here, we prove this
conjecture.

Let $e,e^{\prime}\in E(Q_{n})$ and $e=u_{1}u_{2}$,
$e^{\prime}=v_{1}v_{2}$. The distance between two edges $e$ and
$e^{\prime}$ in $Q_{n}$, we define as follows:

\begin{center}
$d(e,e^{\prime})=\min_{1\leq i\leq 2,1\leq j\leq 2}
d\left(u_{i},v_{j}\right)$.
\end{center}

Let $\alpha$ be an interval $t$-coloring of $Q_{n}$.

For any $e,e^{\prime}\in E(Q_{n})$, define
$\mathrm{sp}_{\alpha}\left(e,e^{\prime}\right)$ as follows:
\begin{center}
$\mathrm{sp}_{\alpha}\left(e,e^{\prime}\right)=\left\vert
\alpha(e)-\alpha(e^{\prime})\right\vert$.
\end{center}

For any $k, 0\leq k\leq n-1$, define $\mathrm{sp}_{\alpha,k}$ as
follows:

\begin{center}
$\mathrm{sp}_{\alpha,k}=\max
\left\{\mathrm{sp}_{\alpha}\left(e,e^{\prime}\right)\colon\,e,e^{\prime}\in
E(Q_{n})~and~d(e,e^{\prime})=k\right\}$.
\end{center}

First we need the following simple lemma.

\begin{lemma}
\label{mylemma3} For any pair of vertices $u,v\in V(Q_{n})$ with
$d(u,v)=k$, there exist $v_{1},v_{2},\ldots,v_{k}$ ($v_{i}\neq
v_{j}$ when $i\neq j$) vertices such that $d(u,v_{i})=k-1$ and
$vv_{i}\in E(Q_{n})$ for $i=1,\ldots,k$.
\end{lemma}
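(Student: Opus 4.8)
The plan is to prove Lemma~\ref{mylemma3} by working directly with the standard coordinate representation of the hypercube. I identify $V(Q_{n})$ with $\{0,1\}^{n}$, so that two vertices are adjacent exactly when their binary strings differ in a single coordinate, and $d(u,v)$ equals the Hamming distance between the strings of $u$ and $v$. Given $u,v$ with $d(u,v)=k$, let $D\subseteq\{1,\ldots,n\}$ be the set of coordinates in which $u$ and $v$ disagree, so $|D|=k$. The neighbours of $v$ are precisely the strings $v\oplus e_{\ell}$ for $\ell\in\{1,\ldots,n\}$, where $e_{\ell}$ denotes the string with a $1$ in position $\ell$ and $0$ elsewhere and $\oplus$ is coordinatewise addition mod $2$.

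The key observation is simple: flipping coordinate $\ell$ of $v$ decreases the Hamming distance to $u$ by exactly $1$ if $\ell\in D$, and increases it by exactly $1$ if $\ell\notin D$. Hence a neighbour $w$ of $v$ satisfies $d(u,w)=k-1$ if and only if $w=v\oplus e_{\ell}$ for some $\ell\in D$. Writing $D=\{\ell_{1},\ldots,\ell_{k}\}$ and setting $v_{i}=v\oplus e_{\ell_{i}}$ for $i=1,\ldots,k$, I obtain $k$ vertices, each adjacent to $v$ and each at distance $k-1$ from $u$. They are pairwise distinct because $v_{i}\oplus v_{j}=e_{\ell_{i}}\oplus e_{\ell_{j}}\neq \mathbf{0}$ whenever $i\neq j$ (the strings $v_{i}$ and $v_{j}$ differ in the two coordinates $\ell_{i}$ and $\ell_{j}$). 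This gives exactly the required $v_{1},\ldots,v_{k}$, so the lemma follows.

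There is no real obstacle here; the only thing to be careful about is invoking the right basic facts about $Q_{n}$, namely that it is isomorphic to the Hamming graph on $\{0,1\}^{n}$ and that graph distance coincides with Hamming distance. Both are standard and can be cited from \cite{b25}. If one prefers to avoid coordinates, the same argument can be phrased in terms of shortest $u,v$-paths: any geodesic from $u$ to $v$ has its last edge of the form $v_{i}v$ with $d(u,v_{i})=k-1$, and in $Q_{n}$ the edges incident to $v$ that lie on some geodesic to $u$ are in bijection with the "directions" separating $u$ from $v$, of which there are exactly $k$; distinctness is immediate since distinct directions give distinct neighbours. Either formulation is short enough to write out in a few lines.
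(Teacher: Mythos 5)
Your proof is correct: identifying $Q_{n}$ with $\{0,1\}^{n}$, the neighbours of $v$ at distance $k-1$ from $u$ are exactly the $k$ vertices obtained by flipping one of the $k$ coordinates where $u$ and $v$ disagree, and these are pairwise distinct. The paper itself offers no proof (the lemma is stated as a "simple lemma" and left unproven), and your Hamming-coordinate argument is the standard verification that fills this gap completely.
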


Now let $\alpha$ be an interval $W(Q_{n})$-coloring of $Q_{n}$.
Clearly, $\mathrm{sp}_{\alpha,0}=n-1$.

\begin{theorem}
\label{mytheorem19} If $1\leq k\leq n-1$, then
$\mathrm{sp}_{\alpha,k}\leq \mathrm{sp}_{\alpha,k-1}+n-k$.
\end{theorem}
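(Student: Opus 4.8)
The plan is to take two edges $e,e^{\prime}\in E(Q_{n})$ with $d(e,e^{\prime})=k$ realizing the maximum, i.e. $\mathrm{sp}_{\alpha}(e,e^{\prime})=\mathrm{sp}_{\alpha,k}$, and produce an auxiliary edge $e^{\prime\prime}$ at distance $k-1$ from one of them whose color lies between the colors of $e$ and $e^{\prime}$ up to an additive error of $n-k$. Write $e=u_1u_2$, $e^{\prime}=v_1v_2$, and assume without loss of generality that $\alpha(e)\le\alpha(e^{\prime})$ and that the minimum defining $d(e,e^{\prime})$ is attained at the pair $(u_1,v_1)$, so $d(u_1,v_1)=k$. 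First I would apply Lemma \ref{mylemma3} with $u=u_1$ and $v=v_1$: there are $k$ distinct vertices $w_1,\dots,w_k$ with $d(u_1,w_i)=k-1$ and $v_1w_i\in E(Q_n)$ for each $i$. The $k$ edges $v_1w_1,\dots,v_1w_k$ are all incident to $v_1$, and each of them is at distance $\le k-1$ from $e$ (indeed from $u_1$), and in fact exactly $k-1$ for all but possibly the one case where $w_i$ happens to coincide with no vertex closer — one checks $d(v_1w_i,e)=k-1$ since $d(u_1,w_i)=k-1$ and $d(u_1,v_1)=k$.

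The key point is then: the set of colors $\{\alpha(v_1w_i):1\le i\le k\}$ together with $\alpha(v_1v_2)=\alpha(e^{\prime})$ consists of $k+1$ colors appearing on edges incident to the single vertex $v_1$, hence by the interval property they form $k+1$ consecutive... no — $v_1$ has degree $n$, so $S(v_1,\alpha)$ is an interval of length $n$, and these $k+1$ colors are among $n$ consecutive integers. Therefore at least one of the $w_i$, call it $w$, satisfies $|\alpha(v_1w)-\alpha(e^{\prime})|\le n-1$; but we want a sharper count. The better route: among the $k+1$ edges at $v_1$ with colors in an interval of $n$ integers, the color $\alpha(e^{\prime})$ is one of them, and the remaining $k$ colors $\alpha(v_1w_i)$ are $k$ distinct integers in that interval, distinct from $\alpha(e^{\prime})$; so at least one of them, say $\alpha(v_1w)$, is within distance $n-k$ of $\alpha(e^{\prime})$ — indeed if all $k$ were at distance $>n-k$ from $\alpha(e^{\prime})$ they could not all fit as distinct integers in the interval on either side together with $\alpha(e^{\prime})$ itself, a pigeonhole count I would spell out. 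Set $e^{\prime\prime}=v_1w$. Then $d(e^{\prime\prime},e)=k-1$, so $\mathrm{sp}_{\alpha}(e,e^{\prime\prime})\le\mathrm{sp}_{\alpha,k-1}$, and
\[
\mathrm{sp}_{\alpha,k}=\alpha(e^{\prime})-\alpha(e)\le\bigl(\alpha(e^{\prime})-\alpha(e^{\prime\prime})\bigr)+\bigl(\alpha(e^{\prime\prime})-\alpha(e)\bigr)\le(n-k)+\mathrm{sp}_{\alpha,k-1},
\]
using the triangle inequality for $|\cdot|$ and $\mathrm{sp}_{\alpha}(e^{\prime},e^{\prime\prime})\le n-k$. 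That gives the claimed bound.

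I would carry out the steps in this order: (1) reduce to a maximizing pair $e,e^{\prime}$ and fix the closest endpoints $u_1,v_1$; (2) invoke Lemma \ref{mylemma3} at $(u_1,v_1)$ to get the $k$ neighbors $w_i$ of $v_1$ with $d(u_1,w_i)=k-1$; (3) verify $d(v_1w_i,e)=k-1$; (4) do the pigeonhole inside the interval $S(v_1,\alpha)$ to extract $w$ with $|\alpha(v_1w)-\alpha(e^{\prime})|\le n-k$; (5) conclude by the triangle inequality. The main obstacle I expect is step (4): one must be careful that the $k$ colors $\alpha(v_1w_i)$ really are distinct (they are, being colors at a common vertex) and then argue the pigeonhole correctly — the cleanest version is that $\alpha(e^{\prime})$ and the $k$ values lie in an integer interval of size $n$, so among the (at most $n-1$) integers strictly between two of these $k+1$ values... actually the crispest statement is: $k+1$ distinct integers inside an interval of $n$ consecutive integers must include two that are at distance $\le (n-1)/k$... that's too weak. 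The correct and sufficient claim is just that \emph{some} $w_i$ has its color within $n-k$ of $\alpha(e^{\prime})$: the $n$ integers of $S(v_1,\alpha)$ split into those $\le\alpha(e^{\prime})$ and those $\ge\alpha(e^{\prime})$; if every $\alpha(v_1w_i)$ were at distance $\ge n-k+1$ from $\alpha(e^{\prime})$, then all $k$ of them lie in the part of the interval at distance $\ge n-k+1$, which has at most $(n-1)-(n-k)=k-1$ elements on whichever side, forcing a collision — so one $w_i$ works. Writing this dichotomy out carefully is the only delicate part; everything else is bookkeeping with distances in $Q_n$.
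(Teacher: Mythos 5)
Your proposal is correct and takes essentially the same route as the paper: pick closest endpoints $u\in e$, $v\in e^{\prime}$, apply Lemma \ref{mylemma3} to get $k$ neighbours of $v$ at distance $k-1$ from $u$, pigeonhole their $k$ distinct colors inside the $n$-element interval $S(v,\alpha)$ to produce an edge $e^{\prime\prime}$ with $d(e,e^{\prime\prime})=k-1$ and color within $n-k$ of $\alpha(e^{\prime})$, and finish with the triangle inequality. The only difference is cosmetic: the paper states the pigeonhole one-sidedly (the minimum of the colors $\alpha(vv_{i})$ is at most $\alpha(e^{\prime})+n-k$), whereas you argue two-sidedly; when writing it up, phrase the count as ``at most $k-1$ elements of $S(v,\alpha)$ in total lie at distance greater than $n-k$ from $\alpha(e^{\prime})$'' rather than per side, which is a one-line adjustment.
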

\begin{proof}
Let $e,e^{\prime}\in E(Q_{n})$ be any two edges of $Q_{n}$ with
$d(e,e^{\prime})=k$. Without loss of generality, we may assume that
$\alpha(e)\geq \alpha(e^{\prime})$. Since $d(e,e^{\prime})=k$, there
exist $u$ and $v$ vertices such that $u\in e$ and $v\in e^{\prime}$
and $d(u,v)=k$. By Lemma \ref{mylemma3}, we have that there are
$v_{1},v_{2},\ldots,v_{k}$ ($v_{i}\neq v_{j}$ when $i\neq j$)
vertices such that $d(u,v_{i})=k-1$ and $vv_{i}\in E(Q_{n})$ for
$i=1,\ldots,k$. Since $Q_{n}$ is $n$-regular, we have
\begin{center}
$\min_{1\leq i\leq k} \alpha(v_{i}v)\leq \alpha(e^{\prime})+n-k$.
(*)
\end{center}

Let $\alpha(e^{\prime\prime})=\min_{1\leq i\leq k} \alpha(v_{i}v)$.
By (*), we obtain
\begin{center}
$\alpha(e^{\prime})\geq \alpha(e^{\prime\prime})-(n-k)$ and
$d(e,e^{\prime\prime})=k-1$.
\end{center}

Thus,
\begin{center}
$\mathrm{sp}_{\alpha}\left(e,e^{\prime}\right)=\left\vert
\alpha(e)-\alpha(e^{\prime})\right\vert \leq \left\vert
\alpha(e)-\alpha(e^{\prime\prime})+n-k\right\vert\leq \left\vert
\alpha(e)-\alpha(e^{\prime\prime})\right\vert+n-k\leq
\mathrm{sp}_{\alpha,k-1}+n-k$.
\end{center}

Since $e$ and $e^{\prime}$ were arbitrary edges with
$d(e,e^{\prime})=k$, we obtain $\mathrm{sp}_{\alpha,k}\leq
\mathrm{sp}_{\alpha,k-1}+n-k$. ~$\square$
\end{proof}

\begin{corollary}
\label{mycorollary6} $\mathrm{sp}_{\alpha,n-1}\leq \frac{n\left(
n+1\right)}{2}-1$.
\end{corollary}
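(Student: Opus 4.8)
The plan is to iterate the recursion supplied by Theorem \ref{mytheorem19}. Recall that immediately before the theorem it is observed that $\mathrm{sp}_{\alpha,0}=n-1$, and this will serve as the base case. First I would write down the $n-1$ inequalities
\[
\mathrm{sp}_{\alpha,k}\leq \mathrm{sp}_{\alpha,k-1}+(n-k),\qquad k=1,2,\ldots,n-1,
\]
and add them together; on summing, the terms $\mathrm{sp}_{\alpha,1},\ldots,\mathrm{sp}_{\alpha,n-2}$ cancel from both sides (a telescoping), leaving
\[
\mathrm{sp}_{\alpha,n-1}\leq \mathrm{sp}_{\alpha,0}+\sum_{k=1}^{n-1}(n-k).
\]

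Next I would evaluate the sum by the substitution $j=n-k$, namely $\sum_{k=1}^{n-1}(n-k)=\sum_{j=1}^{n-1}j=\frac{n(n-1)}{2}$, and then plug in $\mathrm{sp}_{\alpha,0}=n-1$ to get
\[
\mathrm{sp}_{\alpha,n-1}\leq (n-1)+\frac{n(n-1)}{2}=\frac{(n-1)(n+2)}{2}=\frac{n^{2}+n-2}{2}=\frac{n(n+1)}{2}-1,
\]
which is exactly the claimed bound.

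I do not expect any genuine obstacle here: once Theorem \ref{mytheorem19} is available the corollary is a one-line telescoping estimate. The only places that call for a little care are keeping the summation index consistent through the telescoping step and checking the elementary identity $(n-1)(n+2)=n(n+1)-2$. If one prefers to avoid telescoping explicitly, the identical computation can be packaged as an induction on $k$ establishing $\mathrm{sp}_{\alpha,k}\leq (n-1)+kn-\frac{k(k+1)}{2}$ for $0\leq k\leq n-1$ (base case from $\mathrm{sp}_{\alpha,0}=n-1$, inductive step from Theorem \ref{mytheorem19}), and then specializing to $k=n-1$.
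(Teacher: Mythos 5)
Your proof is correct and matches the paper's argument: the paper likewise iterates Theorem \ref{mytheorem19} from $k=1$ to $k=n-1$, starting from $\mathrm{sp}_{\alpha,0}=n-1$, and computes $\mathrm{sp}_{\alpha,n-1}\leq \mathrm{sp}_{\alpha,0}+(n-1)+(n-2)+\cdots+1=\frac{n(n+1)}{2}-1$. Your telescoping (or equivalent induction) is exactly this computation, so there is nothing to add.
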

\begin{proof} By Theorem \ref{mytheorem19}, we have
\begin{center}
$\mathrm{sp}_{\alpha,n-1}\leq
\mathrm{sp}_{\alpha,0}+n-1+n-2+\ldots+1 = \frac{n\left(
n+1\right)}{2}-1$.
\end{center} ~$\square$
\end{proof}

\begin{corollary}
\label{mycorollary7} $W\left(Q_{n}\right) \leq \frac{n\left(
n+1\right) }{2}$ for any $n\in\mathbb{N}$.
\end{corollary}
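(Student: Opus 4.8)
The plan is to read off the desired upper bound from Corollary~\ref{mycorollary6} once we know that no two edges of $Q_n$ can lie at distance more than $n-1$ from one another. Everything else is already in place: Theorem~\ref{mytheorem19} controls how fast the color-spread grows with the edge-distance, Corollary~\ref{mycorollary6} telescopes this over a path of edge-distances, and an interval $W(Q_n)$-coloring must contain an edge colored $1$ and an edge colored $W(Q_n)$.

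First I would establish the edge-diameter estimate: for all $e,e'\in E(Q_n)$ one has $d(e,e')\le n-1$. Write $e=u_1u_2$ and $e'=v_1v_2$. If $d(e,e')=n$, then, since $\mathrm{diam}(Q_n)=n$, each of the four numbers $d(u_i,v_j)$ equals $n$; in particular $d(u_1,v_1)=d(u_2,v_1)=n$, so $v_1$ is the unique antipode of $u_1$ and also the unique antipode of $u_2$, which is impossible because $u_1\ne u_2$. Hence $d(e,e')\le n-1$.

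Now let $\alpha$ be an interval $W(Q_n)$-coloring of $Q_n$. Since $\alpha$ uses all of the colors $1,\ldots,W(Q_n)$, pick edges $e,e'$ with $\alpha(e)=1$ and $\alpha(e')=W(Q_n)$ and put $k=d(e,e')$, so $k\le n-1$ by the previous paragraph. Iterating Theorem~\ref{mytheorem19} starting from $\mathrm{sp}_{\alpha,0}=n-1$, exactly as in the proof of Corollary~\ref{mycorollary6}, gives $\mathrm{sp}_\alpha(e,e')\le\mathrm{sp}_{\alpha,k}\le\mathrm{sp}_{\alpha,0}+(n-1)+(n-2)+\cdots+(n-k)\le\mathrm{sp}_{\alpha,0}+(n-1)+(n-2)+\cdots+1=\frac{n(n+1)}{2}-1$, where the last inequality merely drops the nonnegative tail $(n-k-1)+\cdots+1$. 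Consequently $W(Q_n)-1=|\alpha(e)-\alpha(e')|=\mathrm{sp}_\alpha(e,e')\le\frac{n(n+1)}{2}-1$, i.e.\ $W(Q_n)\le\frac{n(n+1)}{2}$.

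I do not expect a real obstacle here, since the only new ingredient is the one-line edge-diameter bound; the single point requiring care is that Corollary~\ref{mycorollary6} is phrased for edges at distance exactly $n-1$, whereas the critical pair (the edges colored $1$ and $W(Q_n)$) may lie closer, so one must observe that the telescoped bound coming from Theorem~\ref{mytheorem19} is nondecreasing in $k$ and therefore still caps $\mathrm{sp}_\alpha(e,e')$ by $\frac{n(n+1)}{2}-1$. Combined with Theorem~\ref{mytheorem6}, this yields the equality $W(Q_n)=\frac{n(n+1)}{2}$ promised in the introduction.
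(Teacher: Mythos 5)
Your proposal is correct and follows essentially the same route as the paper: the paper's proof simply observes that $d(e,e')\leq n-1$ for all edges of $Q_{n}$ and invokes Corollary~\ref{mycorollary6}. You merely spell out the details the paper leaves implicit (the antipodal argument for the edge-diameter bound, and the monotonicity in $k$ of the telescoped bound so that pairs at distance less than $n-1$ are also covered), which is a faithful elaboration rather than a different approach.
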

\begin{proof}
Clearly, for any $e,e^{\prime}\in E(Q_{n})$, we have
$d(e,e^{\prime})\leq n-1$. Thus, by Corollary \ref{mycorollary6}, we
get $W\left(Q_{n}\right)\leq \frac{n\left( n+1\right)}{2}$.
~$\square$
\end{proof}

By Theorem \ref{mytheorem6} and Corollary \ref{mycorollary7}, we
obtain $W\left(Q_{n}\right)=\frac{n\left(n+1\right)}{2}$ for any
$n\in\mathbb{N}$. Moreover, by Theorem \ref{mytheorem1}, we have
that $Q_{n}$ has an interval $t$-coloring if and only if $n\leq
t\leq \frac{n\left(n+1\right)}{2}$.
\bigskip

\end{document}